  \newcommand{\w}{\omega}
\newcommand{\R}{\mathbb{R}}
\newcommand{\C}{\mathbb{C}}
\newcommand{\T}{\mathbb{T}}
\newcommand{\Z}{\mathbb{Z}}
     \newcommand{\cC}{\mathcal{C}}
   \newcommand{\cD}{\mathcal{D}}
   \newcommand{\cF}{\mathcal{F}}
     \newcommand{\cI}{\mathcal{I}}
    \newcommand{\cL}{\mathcal{L}}
  \newcommand{\cP}{\mathcal{P}}
\renewcommand{\tt}{\mathfrak{t}}        % Lie subalgebra of maximal torus
\renewcommand{\ss}{\mathfrak{s}}
\newcommand{\Gl}{\mathrm{Gl}(n)^{+}}
\DeclareMathOperator{\Ad}{Ad}           % Adjoint
\newtheorem{theorem}{Theorem}[subsection]
\newtheorem{lemma}[theorem]{Lemma}
\newtheorem{proposition}[theorem]{Proposition}
\theoremstyle{definition}
\newtheorem{definition}[theorem]{Definition}
\newtheorem{example}[theorem]{Example}
\newtheorem{remark}[theorem]{Remark}
\begin{document}
%%%%%%%%%%%%%%%%%%%%%%%%%%%%%%%%%%%%%%%%%%

\begin{abstract}
We discuss a  system of third order PDEs for strictly convex smooth functions on domains of Euclidean space.
We argue that it may be understood as a closure of sorts of the first order prolongation of a family of second order PDEs.
% ,
% and that it is invariant by Legendre transformation.
We describe explicitly its real analytic solutions and
all the solutions which satisfy a genericity condition; we also describe a family of non-generic solutions
which has an application to Poisson geometry and Kahler structures on toric varieties.
% : it produces pencils of Poisson structures on topologically non-trivial regions of toric varieties in which a family of (inverses) of Kahler structures 
% degenerates into a Poisson structure whose symplectic leaves are the intersections of the orbits of the torus action with the region.
Our  methods are geometric: we use the theory of Hessian metrics and symmetric spaces to link the analysis of the system of PDEs with properties of the manifold of matrices with
orthogonal columns.
\end{abstract}

\title[PDEs from matrices with orthogonal columns]{Partial differential equations from matrices with orthogonal columns}
% Transverse geometry of regular Poisson manifolds of compact types
% \title[PMCTs II: the regular case]{Compactnss in Poisson Geometry II: \\
% orbifolds, integral affine structures and symplectic gerbes}
%{PMCT II: regular Poisson manifolds of compact type, 
%orbifolds, integral affine structures and symplectic gerbes}

 \author{David Mart\'inez Torres}
% \address{}
% \email{}
% 

 \maketitle

\section{Introduction}

Let $\phi$ be a strictly convex smooth function defined on a connected subset $\Omega\subset \R^{n}$. Its Hessian $H\phi$ defines at each
point an inner product and therefore
the inverse of the Hessian matrix is also a smooth field of inner products. 
It is natural to ask whether this field is also the Hessian of a function.
% \begin{definition}\label{def:inversible} A strictly convex function $\phi\in C^{\infty}(\Omega)$ is {\bf inversible} if the inverse
% of its Hessian matrix is the Hessian
%  of a function in $C^{\infty}(\Omega)$.
% \end{definition}
If $g$ is a field of inner products which is the Hessian of a function, then  there must be an equality of partial derivatives: 
\[\frac{\partial g_{ij}}{\partial x_k}=\frac{\partial g_{ik}}{\partial x_j},\quad 1\leq i,j,k \leq n.\]
If $\Omega$ has trivial first homology group, then the agreement of the above partial derivatives implies that $g$ is the Hessian
of a function \cite{Du}. 
We shall assume from now on that the domain $\Omega$ as trivial first homology group. 

\begin{definition}\label{def:inversible} A strictly convex function $\phi\in C^{\infty}(\Omega)$ has {\bf property $\mathcal{I}$}  if it satisfies
the system of third order PDEs:
\begin{equation}\label{eq:inversible}
 \frac{\partial}{\partial x_k}{H\phi^{-1}}_{ij}-\frac{\partial}{\partial x_j}{H\phi^{-1}}_{ik}=0,\quad 1\leq i,j,k \leq n.
\end{equation}
\end{definition}

% inverse
% of its Hessian matrix is the Hessian
%  of a function in $C^{\infty}(\Omega)$.
% \end{definition}

The purpose of this paper is to analyze the system of third order PDEs (\ref{eq:inversible}) for strictly convex functions.

To be more precise about our focus,  we
note that  it is possible to construct strictly convex solutions to (\ref{eq:inversible}) by elementary means:
Every strictly convex function of one variable has property $\mathcal{I}$. If $\phi_1(x_1)$ and $\phi_2(x_2)$ are strictly convex functions of one variable,
then $\phi_1(x_1)+\phi_2(x_2)$ has  property $\mathcal{I}$. 
Such a function solves the second order hyperbolic PDE with constant coefficients
\begin{equation}\label{eq:2dim-axis-characteristics}
\frac{\partial^{2} \phi}{\partial x_1\partial x_2}=0,
\end{equation}
and, conversely, all the solutions of (\ref{eq:2dim-axis-characteristics})  decompose (locally) as the sum of two functions on each of the variables $x_1$ 
and $x_2$; the parallel translates 
of the coordinate  axis are the (constant) characteristics
of the solutions. If  (\ref{eq:2dim-axis-characteristics}) is replaced by any second order hyperbolic PDE 
with constant coefficients whose solutions have (constant)  orthogonal characteristics, then its strictly convex solutions will 
have property $\mathcal{I}$. 
There is a natural generalization of this family of hyperbolic second order PDEs to arbitrary dimensions. Its strictly convex solutions, which we refer to as 
functions with (constant) orthogonal characteristics, will also have property $\mathcal{I}$.

It is thus natural to study `how close'  a strictly convex function with property $\mathcal{I}$ may be from having orthogonal characteristics. 

Our main results describe sufficient conditions for a strictly convex function with property $\mathcal{I}$ to have orthogonal characteristics. Among such sufficient
conditions there is a generic one:
\begin{theorem}\label{thm:propertyI-generic} If a strictly convex function has property $\mathcal{I}$
and at every point the eigenvalues of its Hessian
 are simple, then it has orthogonal characteristics.
\end{theorem}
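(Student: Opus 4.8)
The plan is to recast property $\mathcal{I}$ as a symmetry (Codazzi) condition and then read it off in the eigenframe of the Hessian, where simplicity of the spectrum makes the equations rigid. Write $H=H\phi$ and $G=H^{-1}$, and let $\nabla^{0}=\d$ be the standard flat connection. By construction $\nabla^{0}H$ is the totally symmetric cubic form $C_{abc}=\partial_a\partial_b\partial_c\phi$. Property $\mathcal{I}$ says precisely that $G$ is again a Hessian metric for $\nabla^{0}$, i.e. that $\nabla^{0}G$ is also totally symmetric. (Equivalently, the Christoffel symbols $\Gamma^{a}_{bc}$ of the Levi--Civita connection of the Riemannian metric $H$, computed in the coordinates $x$, are totally symmetric in $a,b,c$; I would record this equivalence but work with the Codazzi form.) Thus I have two mutually inverse Codazzi tensors for one and the same flat connection.

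Since the eigenvalues are simple, I can choose a smooth frame $e_1,\dots,e_n$, orthonormal for the Euclidean structure, with $He_a=\lambda_a e_a$ and $\lambda_1<\cdots<\lambda_n$. Having orthogonal characteristics is exactly the assertion that this frame is $\nabla^{0}$-parallel (hence constant) and that each $\lambda_a$ depends only on $y_a=\langle x,e_a\rangle$. I introduce the rotation coefficients $\gamma^{a}_{bc}=\langle\nabla^{0}_{e_b}e_c,e_a\rangle$, which satisfy $\gamma^{a}_{bc}=-\gamma^{c}_{ba}$, and expand the two total-symmetry conditions in this frame. Because $H$ and $G$ share the frame, the expansion for $G$ is obtained from that for $H$ by replacing each $\lambda_a$ with $1/\lambda_a$, the $\gamma$'s being unchanged.

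The components with two distinct indices give, for $H$, the relation $e_b(\lambda_a)=(\lambda_b-\lambda_a)\gamma^{a}_{ab}$, and for $G$ the same with $\lambda\mapsto 1/\lambda$. Eliminating $e_b(\lambda_a)$ yields $\tfrac{(\lambda_b-\lambda_a)^2}{\lambda_b}\gamma^{a}_{ab}=0$, so simplicity forces $\gamma^{a}_{ab}=0$ and $e_b(\lambda_a)=0$ for $a\neq b$. The components with three distinct indices (present only for $n\ge 3$) reduce, on each triple, to a relation of the shape $r(\lambda_2-\lambda_3)=p(\lambda_3-\lambda_1)=q(\lambda_1-\lambda_2)$ for $H$ and the reciprocal relation for $G$, where $p,q,r$ are the three independent rotation coefficients on the triple; comparing the two relations and using that the $\lambda_a$ are pairwise distinct forces $p=q=r=0$. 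Hence every $\gamma^{a}_{bc}$ vanishes.

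With all rotation coefficients zero the eigenframe is constant, so $y=Ax$ for a fixed $A\in O(n)$ is an orthogonal change of coordinates in which $H$ is diagonal; since $\partial\lambda_a/\partial y_b=0$ for $b\neq a$, each $\lambda_a=\lambda_a(y_a)$ and therefore $\phi=\sum_a\phi_a(y_a)$, i.e. $\phi$ has orthogonal characteristics. I expect the main obstacle to be the three-index case: one must verify that the two linear systems coming from $H$ and from $H^{-1}$ are simultaneously solvable only by the zero rotation coefficients, and it is exactly here --- through the nonvanishing of the differences $\lambda_a-\lambda_b$ and of the associated Vandermonde-type determinants --- that the simplicity hypothesis is indispensable.
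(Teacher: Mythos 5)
Your argument is correct, and I have checked the frame computations: the two--index comparison does give $(\lambda_b-\lambda_a)^2\gamma^a_{ab}=0$ up to a positive factor, and on a triple of distinct indices the $H$--system gives $(\lambda_2-\lambda_3)p=(\lambda_1-\lambda_3)q=(\lambda_1-\lambda_2)r=:s$ while the $H^{-1}$--system, after clearing denominators, gives $\lambda_1 s=\lambda_2 s=\lambda_3 s$, forcing $s=0$ and hence the vanishing of all rotation coefficients. Your route is, however, genuinely different from the paper's. Your opening reformulation --- property $\mathcal{I}$ as total symmetry of $\nabla^{0}(H\phi^{-1})$, equivalently of the Christoffel symbols --- is exactly Lemma \ref{lem:propertyI-equals-symmetric}, but from there the paper does not compute in a moving eigenframe. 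It recasts the symmetry as a commutation condition $[H\phi,[H\phi]_{,k}]=0$ (Proposition \ref{pro:sym-refined}), then as a tangency condition for horizontal lifts to the submanifold $\cC$ of matrices with orthogonal columns inside the universal orthogonal frame bundle $\pi:(\Gl,\nabla)\to\cP$ (Theorem \ref{thm:differential-relation}), and deduces the theorem from the foliation $\cC=\mathrm{SO}(n)\cD$ and the stratification of $\cP$ by eigenvalue multiplicities (Theorem \ref{thm:regular-strata}): the simplicity hypothesis is the statement that $H\phi(\Omega)$ lies in the open stratum, and the horizontal lift is then trapped in a single leaf $B\cD$, i.e.\ the eigenframe is parallel. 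Your Codazzi/rotation-coefficient computation reaches the same conclusion ($\gamma^a_{bc}\equiv 0$, eigenframe constant, eigenvalues separated) in an elementary and self-contained way, at the price of not producing the machinery the paper reuses for the real-analytic case and for the non-generic bifurcating examples; note also that the single commutator condition already forces $(\lambda_c-\lambda_d)\gamma^d_{bc}=0$ for $c\neq d$, so one can in fact dispense with pairing the two Codazzi systems. Two small points to tidy up: the existence of a global smooth ordered eigenframe uses that the eigenline bundles are trivial, which follows from the standing assumption that $\Omega$ has trivial first homology (or one argues locally and uses connectedness to see the locally constant frame is globally constant); and the final passage from $\partial^2\phi/\partial y_a\partial y_b=0$ to the global splitting $\phi=\sum_a\phi_a(y_a)$ again uses the topology of $\Omega$, though for the paper's Definition \ref{def:orthogonal-characteristics} the diagonality of $\mathcal{L}^2_C\phi$ already suffices.
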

Another sufficient condition refers to the regularity of the functions:
\begin{theorem}\label{thm:propertyI-analytic} If a real analytic strictly convex function has property $\mathcal{I}$, then it has orthogonal characteristics. 
\end{theorem}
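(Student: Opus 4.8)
The plan is to combine Theorem~\ref{thm:propertyI-generic} with the rigidity of real analytic functions, passing between a dense open ``generic'' set and all of $\Omega$. First I would write $G = H\phi$, a real analytic field of symmetric positive definite matrices on the connected set $\Omega$, and invoke real analytic perturbation theory: the number of distinct eigenvalues of $G$ attains its maximal value $k$ on an open set $\Omega_0$ whose complement is a proper real analytic subset $\Sigma$. Since $\Sigma$ has positive codimension, $\Omega_0$ is dense and connected, and on $\Omega_0$ the distinct eigenvalues $\lambda_1(x),\dots,\lambda_k(x)$ and their spectral projections $P_1(x),\dots,P_k(x)$ are real analytic, with each multiplicity $m_j = \rank P_j$ constant.

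The heart of the argument is a pointwise computation showing that property~$\mathcal{I}$ forces the spectral projections to be locally constant on $\Omega_0$. I would fix $p \in \Omega_0$ and, after a constant orthogonal change of coordinates (which preserves both strict convexity and property~$\mathcal{I}$), arrange that $G(p)$ is diagonal. Since $G$ is a Hessian, the third derivatives $\phi_{abc} := \partial_a\partial_b\partial_c\phi$ form a totally symmetric array, and, as noted after Definition~\ref{def:inversible}, property~$\mathcal{I}$ is equivalent to $\partial_c(G^{-1})_{ab}$ being totally symmetric as well. Expanding $\partial_c(G^{-1})_{ab} = -\sum_{r,s}(G^{-1})_{ar}(G^{-1})_{bs}\phi_{rsc}$ at $p$, where $G^{-1}$ is diagonal, total symmetry reduces to $\left(\tfrac{1}{\lambda_b}-\tfrac{1}{\lambda_c}\right)\phi_{abc}=0$ for all $a,b,c$. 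Hence $\phi_{abc}(p)=0$ unless the indices $a,b,c$ all lie in a single eigenspace of $G(p)$. In particular the off--block--diagonal part of $\partial_c G = (\phi_{abc})_{ab}$ vanishes, so the standard first order perturbation formula for spectral projections gives $dP_j = 0$ at $p$. As $p$ is arbitrary and $\Omega_0$ is connected, each $P_j$ is a constant projection on $\Omega_0$, defining a fixed orthogonal decomposition $\R^n = E_1 \oplus \cdots \oplus E_k$.

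To conclude, I would choose a constant orthogonal matrix $A$ whose rows form an orthonormal basis adapted to $E_1 \oplus \cdots \oplus E_k$. For $x \in \Omega_0$ one then has $A\,G(x)\,A^T = \operatorname{diag}(\lambda_1(x)I_{m_1},\dots,\lambda_k(x)I_{m_k})$, which is diagonal; note that no special treatment of the repeated eigenvalues is needed, since $G$ acts on each $E_j$ as the scalar $\lambda_j$, so in any adapted orthonormal basis the corresponding block is $\lambda_j I_{m_j}$. The off--diagonal entries of $A\,G\,A^T$ are real analytic on $\Omega$ and vanish on the dense set $\Omega_0$, hence vanish identically on $\Omega$. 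Thus in the coordinates $y = Ax$ we have $\partial^2\phi/\partial y_i\partial y_j = 0$ for $i \neq j$ throughout $\Omega$, which (integrating on the domain of trivial first homology, exactly as in the introduction) means $\phi$ decomposes as a sum of functions of the single variables $y_i$; that is, $\phi$ has orthogonal characteristics.

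The step I expect to be most delicate is the real analytic perturbation input of the first paragraph: guaranteeing that the generic multiplicity pattern is realized on a single connected dense open set $\Omega_0$ carrying globally defined analytic projections, and justifying that the conclusion propagates across the singular locus $\Sigma$ by analytic continuation. The algebraic core --- the vanishing of $\phi_{abc}$ across distinct eigenspaces --- is a short computation that for $k=n$ recovers the mechanism of Theorem~\ref{thm:propertyI-generic}; the genuine work lies in handling eigenvalue coincidences cleanly and in the passage from $\Omega_0$ to $\Omega$.
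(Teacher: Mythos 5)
Your proof is correct and reaches the theorem by a genuinely more elementary route than the paper's. The paper routes everything through the universal orthogonal frame bundle $\pi:(\Gl,\nabla)\to\cP$: it first proves the generic case on a fixed stratum (Theorem \ref{thm:regular-strata}) by showing that horizontal lifts stay inside a single leaf $B\cD$ of the foliation of $\cC$, then selects the stratum $\Theta^{\kappa}_\cP$ whose preimage under $H\phi$ has nonempty interior and propagates the containment $H\phi\subset\mathrm{Ad}_B(\cD)$ to all of $\Omega$ by real analyticity. Your argument replaces the frame-bundle step with a direct spectral computation: at a point where $H\phi$ is diagonalized, total symmetry of $\partial_c(H\phi^{-1})_{ab}$ yields $(\lambda_b^{-1}-\lambda_c^{-1})\phi_{abc}=0$, so third derivatives vanish across distinct eigenspaces and the spectral projections are parallel --- the same mechanism as Proposition \ref{pro:sym-refined} and Theorem \ref{thm:regular-strata}, but in bare coordinates and without any of the $(\cC,\cF,\Theta_{\cC})$ machinery. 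The analytic-continuation endgame is identical in both proofs. One small repair is needed: the complement $\Omega_0$ of the degeneracy locus $\Sigma$ is open and dense, but $\Sigma$ is only the zero set of a real analytic function and may be a hypersurface that disconnects $\Omega$, so you cannot assert that $\Omega_0$ is connected or that the $P_j$ are constant on all of it. This costs nothing: run the argument on a single connected component $U$ of $\Omega_0$, obtain $A\,H\phi\,A^{T}$ diagonal on $U$, and note that the off-diagonal entries, being real analytic on the connected set $\Omega$ and vanishing on the open set $U$, vanish identically. With that one-line adjustment the proof is complete, and it is arguably more self-contained than the published one, at the price of not exposing the geometric picture (the foliated desingularization $\pi|_{\cC}$) that the paper is built around.
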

We shall prove  Theorem \ref{thm:propertyI-generic} first for functions of two variables. For them the system (\ref{eq:inversible}) has two equations and 
the theorem will follow from an algebraic manipulation valid under the hypothesis on the Hessian.
The algebraic manipulation will have a geometric counterpart:  
The family of second order hyperbolic PDEs with constant coefficients whose solutions have orthogonal characteristics 
defines a pencil of hyperplanes on the space of jets of order two; away from 
its base, and in the set where the Hessian is strictly positive, it restricts to a foliation. Strictly convex functions with property
$\mathcal{I}$ define a subset\footnote{That strictly convex functions  with orthogonal characteristics have property 
$\mathcal{I}$ means that this subset contains the prolongation of any of the previous hyperplanes.} of the space of jets of order three.
The hypothesis on the eigenvalues of
the Hessian singles out the locus of smooth points for which the jet projection is a submersion; its image is the aforementioned foliated open
subset of the jets of order two. The geometric 
manifestation of our
algebraic manipulation will be that the Cartan connection is tangent to the leaves of the pullback foliation. This is why one may say that for strictly
convex functions the system of third order PDEs (\ref{eq:inversible}) is the closure of the prolongation of the aforementioned pencil of second order hyperbolic PDEs.  

To go to arbitrary dimensions we will not follow the jet space approach, as we find the algebraic complexities difficult to manage. We shall
switch our viewpoint to that of Hessian metrics. In this language what we are asking is when for a given Hessian metric on a domain of Euclidean space 
its inverse metric is also Hessian. We refer to such metrics as Hessian metrics with property $\mathcal{I}$. The first manifestation of the relevance of the
metric viewpoint will be  the following:

\begin{lemma}\label{lem:propertyI-equals-symmetric} A Hessian metric  has property $\mathcal{I}$ if and only if its Christoffel symbols 
 (of the second kind) are symmetric in the three indices. 
\end{lemma}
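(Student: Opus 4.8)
The plan is to translate the closedness condition (\ref{eq:inversible}) satisfied by the entries of $H\phi^{-1}$ into an identity among the Christoffel symbols of the Hessian metric $g_{ij}=(H\phi)_{ij}$, and then to observe that this identity is precisely the statement that the second-kind symbols are invariant under exchanging the upper index with a lower one.

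First I would record the elementary but crucial feature of a Hessian metric: since $g_{ij}=\partial_i\partial_j\phi$, each derivative $\partial_k g_{ij}=\partial_i\partial_j\partial_k\phi$ is totally symmetric in $i,j,k$. Consequently the first-kind Christoffel symbols collapse to $\Gamma_{ij,k}=\tfrac12\partial_k g_{ij}$, which are themselves symmetric in all three indices. The only non-trivial content of the lemma will therefore concern how the inverse metric interacts with the raising of an index.

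Next, writing $g^{ij}=(H\phi^{-1})_{ij}$ and differentiating $g^{ip}g_{pj}=\delta^i_j$, I would use the standard formula $\partial_k g^{ij}=-g^{ip}g^{jq}\,\partial_k g_{pq}$ and substitute $\partial_k g_{pq}=2\Gamma_{pq,k}$. Here the total symmetry of the first-kind symbol is essential: it permits the rewriting $g^{jq}\Gamma_{pq,k}=g^{jq}\Gamma_{pk,q}=\Gamma^{j}_{pk}$, converting the raised lower index into a contraction of the comma index and yielding the clean expression $\partial_k g^{ij}=-2\,g^{ip}\,\Gamma^{j}_{pk}$. Substituting this into (\ref{eq:inversible}) gives $\partial_k g^{ij}-\partial_j g^{ik}=-2\,g^{ip}\bigl(\Gamma^{j}_{pk}-\Gamma^{k}_{pj}\bigr)$, and since $H\phi$ is invertible this expression vanishes for all $i$ if and only if $\Gamma^{j}_{pk}=\Gamma^{k}_{pj}$ for all $p,j,k$. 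Reading this equivalence in both directions already proves the lemma, modulo interpreting the target condition.

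Finally I would interpret the identity $\Gamma^{j}_{pk}=\Gamma^{k}_{pj}$: it exchanges the upper index with the second lower index, while the automatic symmetry $\Gamma^{j}_{pk}=\Gamma^{j}_{kp}$ exchanges the two lower indices. These two transpositions generate the full symmetric group on the three slots, so together they say exactly that $\Gamma^{k}_{ij}$ depends only on the unordered triple $\{i,j,k\}$, i.e. is symmetric in the three indices. I expect the only delicate point to be the bookkeeping of index positions; in particular, one must verify that the Hessian symmetry is precisely what merges the derivative of the inverse metric into a single Christoffel symbol, since for a general metric that derivative is a sum of two terms, one for each raised index, with no reason to assemble into a totally symmetric object.
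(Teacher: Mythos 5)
Your proof is correct and follows essentially the same route as the paper's: both differentiate the identity $H\phi^{-1}H\phi=\mathrm{I}$ to convert the closedness condition on the entries of the inverse metric into a condition on the Christoffel symbols, strip off the invertible factor $H\phi$, and use the automatic total symmetry of $\phi_{,ijk}$ (equivalently, of the two lower indices of $\Gamma^{i}_{jk}$) to upgrade the single extra transposition to full symmetry in all three indices. The only difference is presentational: you work in index notation through the first-kind symbols and the formula $\partial_k g^{ij}=-g^{ip}g^{jq}\partial_k g_{pq}$, whereas the paper phrases the identical computation as an equality of rows of the Christoffel matrices $\Gamma_k=H\phi^{-1}[H\phi]_{,k}$.
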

Lemma \ref{lem:propertyI-equals-symmetric} suggests that property $\mathcal{I}$ could be described as a feature of the tangent or
the orthogonal frame bundle of the Hessian metric with its Levi-Civita connection. A fundamental property of 
Hessian metrics on domains of Euclidean space is that they posses a universal (positively oriented) orthogonal frame bundle with connection \cite{Du},
$\pi:(\Gl,\nabla) \to \cP$, where $\cP$ denotes positive matrices and $\pi$ and $\nabla$ are a natural map and connection, respectively,  which come from symmetric space theory. 
We shall argue that the universal orthogonal frame bundle offers an appropriate replacement for the jet space picture.
Briefly, jet spaces of order two will be replaced by  positive matrices $\mathcal{P}$;
the subset of the jet spaces of order three defined by property $\mathcal{I}$ will be replaced by the 
submanifold of matrices with orthogonal columns
 $\mathcal{C}\subset \Gl$; the restriction of the jet projection 
will correspond to  $\pi|_{\mathcal{C}}: \mathcal{C}\to \mathcal{P}$; the Cartan connection will correspond to 
 the universal Levi-Civita connection $\nabla$.
Property $\mathcal{I}$ for a Hessian metric will translate as follows:
\begin{theorem}\label{thm:differential-relation}
A Hessian metric $H\phi$ in $\Omega$ has property $\mathcal{I}$ if and only if for any point $x\in \Omega$ and any curve 
$\gamma$ at $x\in \Omega$ there exist an orthonormal frame for $H\phi(x)$ in $\mathcal{C}$ such that the corresponding
horizontal lift of $\gamma$ at that frame  is tangent to $\cC$.
\end{theorem}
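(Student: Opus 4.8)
The plan is to convert the stated differential relation into a pointwise statement in linear algebra and to check that it coincides with the characterization of property $\mathcal{I}$ given by Lemma~\ref{lem:propertyI-equals-symmetric}. Fix $x\in\Omega$, write $g:=H\phi(x)\in\cP$, and for a tangent vector $v=\dot\gamma(0)$ let $\Phi_v$ be the symmetric matrix with entries $(\Phi_v)_{al}=\sum_k v^k\,\partial_a\partial_k\partial_l\phi(x)$; in particular $(\Phi_k)_{al}=\partial_a\partial_k\partial_l\phi(x)$ and $\Phi_v=\sum_k v^k\Phi_k$. Because $\pi\colon(\Gl,\nabla)\to\cP$ is the universal orthogonal frame bundle, the $\nabla$-horizontal lift of $\gamma$ starting at a frame $A_0\in\pi^{-1}(g)$ is exactly the parallel transport of $A_0$ along $\gamma$ for the Levi-Civita connection of $H\phi$ \cite{Du}; hence its initial velocity is the matrix $\tilde\gamma'(0)=-\Gamma_{\dot\gamma}A_0$, where $\Gamma_{\dot\gamma}=\tfrac12\,g^{-1}\Phi_{\dot\gamma}$ collects the second-kind Christoffel symbols contracted with $\dot\gamma$.

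First I would record the two linear constraints at $A_0$. Differentiating the defining equations of $\cC$ shows that $V\in T_{A_0}\cC$ if and only if $\Sym(A_0^{T}V)$ is diagonal. A frame $A_0\in\pi^{-1}(g)$ satisfies $A_0A_0^{T}=g^{-1}$, while the condition $A_0\in\cC$ is that $D:=A_0^{T}A_0$ be diagonal. Substituting $\tilde\gamma'(0)=-\Gamma_{\dot\gamma}A_0$ and using $A_0^{T}g^{-1}=D A_0^{T}$ gives $A_0^{T}\Gamma_{\dot\gamma}A_0=\tfrac12 D S$ with $S:=A_0^{T}\Phi_{\dot\gamma}A_0$ symmetric. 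Since $(DS+SD)_{ij}=(d_i+d_j)S_{ij}$ and the $d_i>0$, the symmetric part $\Sym(A_0^{T}\Gamma_{\dot\gamma}A_0)$ is diagonal precisely when $S=A_0^{T}\Phi_{\dot\gamma}A_0$ is diagonal. Thus the horizontal lift at $A_0$ is tangent to $\cC$ if and only if $A_0^{T}\Phi_{\dot\gamma}A_0$ is diagonal.

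Next I would treat the existential quantifier over frames. The relations $A_0A_0^{T}=g^{-1}$ and $A_0^{T}A_0$ diagonal force the normalized columns of $A_0$ to be a standard-orthonormal eigenbasis of $g$, and conversely every such eigenbasis arises, so $\cC\cap\pi^{-1}(g)$ is identified with the orthonormal eigenbases of $g$ (up to signs and ordering). For such an $A_0$, diagonality of $A_0^{T}\Phi_{\dot\gamma}A_0$ is equivalent to $\Phi_{\dot\gamma}$ being diagonal in that eigenbasis. Hence a tangent frame exists if and only if $\Phi_{\dot\gamma}$ is diagonal in some orthonormal eigenbasis of $g$, that is, if and only if $g$ and $\Phi_{\dot\gamma}$ commute. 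Letting $\gamma$ range over all curves at $x$ makes $\dot\gamma$ range over all of $\R^{n}$, so by linearity the differential relation at $x$ holds exactly when $[\,g,\Phi_k\,]=0$ for every $k$.

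It remains to match this with property $\mathcal{I}$. By Lemma~\ref{lem:propertyI-equals-symmetric} the latter means the second-kind Christoffel symbols are totally symmetric, which unwinds to the symmetry of the matrix $g^{-1}\Phi_k$ for each $k$; and a product $g^{-1}\Phi_k$ of two symmetric matrices is itself symmetric precisely when $[\,g,\Phi_k\,]=0$. Since the two conditions coincide at every $x$, the theorem follows. The step I expect to be most delicate is the existential one: identifying $\cC\cap\pi^{-1}(g)$ with the orthonormal eigenbases of $g$ and arguing that solvability over this (generically finite) family is exactly the commuting condition, including the case of repeated eigenvalues, where the freedom inside eigenspaces must be exploited. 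A secondary point requiring care is the clean invocation of \cite{Du}, so that the pullback of $\nabla$ really is the Levi-Civita connection and horizontal lifts are genuinely parallel frames.
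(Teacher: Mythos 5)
Your argument is correct and follows essentially the same route as the paper's proof: use Duistermaat's identification of the pullback of $\nabla$ with the Levi-Civita connection to write the lift's velocity as $-\Gamma_{\dot\gamma}A_0$, express tangency to $\cC$ as diagonality of the anticommutator $DS+SD$ with $D$ positive diagonal (hence of $S=A_0^{T}\Phi_{\dot\gamma}A_0$ itself), and then identify the resulting condition with symmetry of the Christoffel matrices via commutation of $H\phi$ with $[H\phi]_{,k}$. The only cosmetic difference is that you spell out the existential step over frames by simultaneous diagonalization, where the paper instead cites item (2) of Proposition \ref{pro:sym-refined}.
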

 There will be a property analogous to the tangency of the Cartan connection to the pullback foliation coming from the pencil of degree two hyperbolic PDEs: 
% coming from the hyperplane pencil defined by these degree two hyperbolic PDEs. in the universal orthogonal frame bundle
% setting:
\begin{proposition}\label{pro:Cartan} The restriction of the universal Levi-Civita connection to $\mathcal{C}$ defines a (regular) involutive distribution.
Its leaves are the left translates of the strictly positive matrices $\mathcal{D}$ which fit in the Cartan factorization
$\mathcal{C}=\mathrm{SO}(n)\mathcal{D}$.
\end{proposition}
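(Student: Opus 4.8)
The plan is to make the distribution explicit and then read off its integral manifolds directly. Recall that, as a symmetric pair, $\Gl$ carries the Cartan decomposition $\mathfrak{gl}(n)=\so(n)\oplus\Sym(n)$ into antisymmetric and symmetric matrices, and that the universal connection $\nabla$ is the associated canonical connection \cite{Du}: in the left trivialization a tangent vector at $A\in\Gl$ is written $A\xi$ with $\xi\in\mathfrak{gl}(n)$, the vertical space of $\pi$ is $A\,\so(n)$, and the horizontal space is the left-invariant distribution $\cH_{A}=A\,\Sym(n)$. The restriction of $\nabla$ to $\cC$ is then the distribution $\cE:=\cH\cap T\cC$ on $\cC$, and the first task is to compute it.

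For $A\in\cC$ set $D=A^{T}A=\mathrm{diag}(d_{1},\dots,d_{n})$, a positive diagonal matrix with $d_{i}>0$. Linearising the defining condition $A^{T}A\in\mathrm{Diag}$ along $A\xi$ gives $\operatorname{off}(\xi^{T}D+D\xi)=0$, that is
\[
d_{i}\,\xi_{ij}+d_{j}\,\xi_{ji}=0,\qquad i\neq j,
\]
which describes $T_{A}\cC$. Intersecting with the horizontal (symmetric) vectors $\xi=S=S^{T}$ turns each off-diagonal equation into $(d_{i}+d_{j})S_{ij}=0$; since $D$ is strictly positive we have $d_{i}+d_{j}>0$, forcing $S_{ij}=0$ for $i\neq j$. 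Hence $\cE_{A}=A\,\aa$, where $\aa\subset\mathfrak{gl}(n)$ is the abelian subalgebra of diagonal matrices. In particular $\cE$ has constant rank $n$, so it is a regular distribution. I expect this intersection, and in particular the role of strict positivity in collapsing $\cE$ onto the diagonal matrices, to be the only substantive point of the argument.

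It then remains to integrate $\cE$. Since $\aa$ is an (abelian, hence Lie) subalgebra of $\mathfrak{gl}(n)$, the left-invariant distribution $A\mapsto A\,\aa$ on $\Gl$ is involutive, with leaves the left cosets $A\,\cD$ of the connected subgroup $\cD=\exp(\aa)$ of strictly positive diagonal matrices. I would check that these cosets are the desired integral manifolds inside $\cC$: writing $A=QD_{0}$ with $Q\in\mathrm{SO}(n)$, $D_{0}\in\cD$ via the Cartan factorization, one has $A\,\cD=QD_{0}\cD=Q\cD\subset\cC$, while a direct computation gives $T_{gD'}(A\cD)=gD'\,\aa=\cE_{gD'}$ at every point $gD'\in A\cD$. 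Thus each $A\cD$ is an integral manifold of $\cE$ of full rank, so $\cE$ is involutive and its leaves are exactly the left translates of $\cD$.

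Finally, to see that these leaves genuinely foliate $\cC$ (so that the foliation is regular), I would invoke the factorization $\cC=\mathrm{SO}(n)\cD$: the map $\mathrm{SO}(n)\times\cD\to\cC$, $(Q,D)\mapsto QD$, is a diffeomorphism (a matrix that is both orthogonal and positive diagonal is the identity), under which $\cE$ becomes the product distribution tangent to the $\cD$-factor. The leaves are then the fibres of the projection to $\mathrm{SO}(n)$, the leaf space is the manifold $\mathrm{SO}(n)$, and the foliation is regular, as claimed.
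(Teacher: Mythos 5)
Your proposal is correct and follows essentially the same route as the paper's proof of Proposition \ref{pro:non-linear-inv}: identify the horizontal space at $A$ as the left translate $A\cdot\ss$, intersect with $T_A\cC$, and observe that strict positivity of the diagonal entries forces the intersection down to $A\cdot\mathfrak{d}$, whose integral manifolds are the cosets $B\cD$. The only (cosmetic) difference is that you compute $T_A\cC$ by linearizing $q(A)=A^{T}A$ along the fibre over $\cD$, whereas the paper reads it off from the product structure $\cC=\mathrm{SO}(n)\cD$ and then uses the equivalent positivity fact that a nonzero skew matrix conjugated into $C\cdot\ss$ can never become symmetric.
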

The generic condition on eigenvalues in Theorem \ref{thm:propertyI-generic} is just the open stratum of a natural stratification of $\mathcal{P}$.
To describe more precise sufficient conditions for a Hessian metric with property $\mathcal{I}$ to come from a function with orthogonal characteristics, 
we will analyze the interaction among this stratification, the foliation on
$\mathcal{C}$ defined by the universal Levi-Civita connection, and the map $\pi|_{\mathcal{C}}$. Theorem \ref{thm:propertyI-analytic} will hinge on
real analytic features of these objects.

As we shall see, property $\mathcal{I}$ for strictly convex functions appears  in a problem of Poisson geometry in toric varieties.
The so-called totally real toric  Poisson structures have properties analogous to that of Hamiltonian Kahler forms. For instance, whereas the latter 
are encoded by appropriate strictly convex functions \cite{Gui}, the former are encoded by the simplest strictly convex functions: quadratic forms.
% 
% in Euclidean space,  possible generalization 
% 
% A Poisson structure on a toric variety $(X,\mathbb{T})$ will be called toric if its bivector field is of type $(1,1)$, positive and $\mathbb{T}$-invariant, and
% its symplectic leaves are the (finitely many) 
% $\mathbb{T}$-orbits; it will be totally real if the orbits of the maximal compact subgroup $T\subset \mathbb{T}$ are lagrangian submanifolds of the symplectic leaves.
% Totally real toric Poisson structures are one possible Poisson analog of Hamiltonian Kahler forms: the inverse bivector field of such a Kahler form is of type $(1,1)$, positive, 
% $T$-invariant and it posses a unique symplectic leaf in which the $T$-orbits are lagrangian submanifolds. Once we choose a basis of the Lie algebra of $T$,
% a Hamiltonian Kahler form is encoded by a strictly convex function on $\R^{n}$ \cite{Gui}. It turns out that a totally real toric Poisson structure is 
% also encoded by the simplest kind
% of strictly convex function: a  quadratic form.
The most natural Poisson-theoretic PDE for a pair given by a totally real toric  Poisson structure and a Hamiltonian Kahler form will correspond 
to 
property $\mathcal{I}$:

\begin{theorem}\label{thm:propertyI-Poisson} Let $(X,\mathbb{T})$ be a (smooth) toric variety endowed with a totally real toric Poisson structure $\Pi$ 
and a Kahler form $\sigma$ 
for with the action of the maximal compact torus
$T\subset \T$ is Hamiltonian. Let $P$ denote the inverse Poisson structure to $\sigma$.  Then the following statements are equivalent:
\begin{enumerate}
 \item The Poisson structures $\Pi$ and $P$ Poisson commute: $[\Pi,P]=0$.
 \item In a basis of the Lie algebra of $T$ for which $\Pi$ corresponds to the standard quadratic from of $\R^{n}$,
 the strictly convex function which corresponds to $\sigma$ has property $\mathcal{I}$. 
\end{enumerate}

\end{theorem}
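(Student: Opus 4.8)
The plan is to reduce the global identity $[\Pi,P]=0$ on $X$ to an explicit computation on the open dense orbit and there to recognise the Schouten--Nijenhuis bracket as the left-hand side of (\ref{eq:inversible}). Since $X$ is smooth and the open orbit $(\C^{*})^{n}\subset X$ is dense, the bracket $[\Pi,P]$, which is a smooth section of $\wedge^{3}TX$, vanishes on all of $X$ if and only if it vanishes on the open orbit; and the open orbit maps onto the interior of the moment polytope, which is the domain $\Omega$ carrying the strictly convex function $\phi$ attached to $\sigma$. Thus it suffices to work on $(\C^{*})^{n}$ and to show that there the bracket is, component by component, the expression in (\ref{eq:inversible}) for $\phi$. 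Note that $\sigma$ is a genuine Kahler form on all of $X$, so $P=\sigma^{-1}$ is a globally defined nondegenerate Poisson structure and the reduction is legitimate.

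First I would fix the two natural coordinate systems on the orbit. In logarithmic coordinates $w_{j}=x_{j}+i\theta_{j}$, $z_{j}=\e^{w_{j}}$, the Kahler potential $\psi=\psi(x)$ is strictly convex, and (by the encoding of such structures) a totally real toric Poisson structure attached to the quadratic form with symmetric matrix $Q$ is the constant bivector pairing radial and angular directions, $\Pi=\sum_{a,b}Q_{ab}\,\partial_{x_{a}}\wedge\partial_{\theta_{b}}$; the hypothesis on the basis of $\Lie(T)$ means $Q=I$. Passing to the action--angle coordinates $(\mu,\theta)$ of \cite{Gui}, where $\mu=\nabla_{x}\psi$ is the moment map and $\sigma=\sum_{l}\d\mu_{l}\wedge\d\theta_{l}$, the symplectic potential $\phi(\mu)$ is the Legendre transform of $\psi$, so that $H\phi(\mu)=\big(H\psi(x)\big)^{-1}$. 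In these coordinates $P=\sum_{l}\partial_{\mu_{l}}\wedge\partial_{\theta_{l}}$ is \emph{constant}, while the change of frame $\partial_{x_{a}}=\sum_{l}(H\psi)_{la}\,\partial_{\mu_{l}}$ (with $\partial_{\theta_{b}}$ unchanged, since $\mu$ does not depend on $\theta$) turns $\Pi$ into $\Pi=\sum_{j,k}(H\psi)_{jk}\,\partial_{\mu_{j}}\wedge\partial_{\theta_{k}}=\sum_{j,k}(H\phi^{-1})_{jk}\,\partial_{\mu_{j}}\wedge\partial_{\theta_{k}}$, whose coefficients depend on $\mu$ only. The point is that the moment-map change of variables inserts exactly the Jacobian $\partial\mu/\partial x=H\psi=H\phi^{-1}$ into the coefficients of $\Pi$.

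The computation is then immediate. Because $P$ has constant coefficients, the terms of the Schouten bracket involving $\partial P$ drop out and $[\Pi,P]$ becomes a first-order operator in the coefficients of $\Pi$: writing $C_{jk}=(H\phi^{-1})_{jk}$ and $\Pi^{\mu_{j}\theta_{k}}=C_{jk}$, one has $[\Pi,P]^{abc}=\sum_{l}\big(P^{al}\partial_{l}\Pi^{bc}+P^{bl}\partial_{l}\Pi^{ca}+P^{cl}\partial_{l}\Pi^{ab}\big)$, where $\sum_{l}P^{al}\partial_{l}$ equals $\partial_{\theta_{m}}$ for $a=\mu_{m}$ and $-\partial_{\mu_{m}}$ for $a=\theta_{m}$. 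Since $C$ is independent of $\theta$, a short check of index types shows that the components with all-$\mu$, all-$\theta$, or $(\mu,\mu,\theta)$ indices vanish identically, and that the only surviving components are
\begin{equation}
[\Pi,P]^{\theta_{i}\mu_{j}\theta_{k}}=\partial_{\mu_{k}}C_{ij}-\partial_{\mu_{i}}C_{jk}.
\end{equation}
These vanish for all $i,j,k$ precisely when $\partial_{\mu_{k}}(H\phi^{-1})_{ij}$ is totally symmetric in its three indices, which, given the symmetry of $H\phi^{-1}$, is exactly the system (\ref{eq:inversible}). Hence $[\Pi,P]=0$ if and only if $\phi$ has property $\mathcal{I}$, which is the equivalence (1)$\Leftrightarrow$(2).

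The main obstacle is not the bracket computation but the bookkeeping that precedes it: one must extract from the earlier description of totally real toric Poisson structures the precise normal form $\Pi=\sum_{a,b}Q_{ab}\,\partial_{x_{a}}\wedge\partial_{\theta_{b}}$ and confirm that the moment-map change of variables produces the inverse Hessian $H\phi^{-1}$, not some transpose or twist, as the coefficient matrix of $\Pi$ in action--angle coordinates. A useful consistency check is to run the identical computation in logarithmic coordinates, where instead $\Pi$ is constant and $P$ carries the coefficient matrix $H\psi^{-1}=H\phi$; this yields the symmetry of $\partial_{x}H\phi$ in $x$, i.e.\ property $\mathcal{I}$ for $\psi$. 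That this agrees with the answer above is guaranteed by the self-duality of property $\mathcal{I}$ under the Legendre transform relating $\phi$ and $\psi$ \cite{Du}, which provides a built-in sanity check on the sign and index conventions.
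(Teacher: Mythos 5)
Your argument is correct, and it reaches the same first--order condition as the paper, but by a route that is in a precise sense dual to the one taken there. The paper also restricts to the open orbit, but it keeps the \emph{logarithmic} coordinates in which $\Pi$ is the constant structure: it reads $[\Pi,P]=0$ as $d_\Pi P=0$ and uses the Lichnerowicz isomorphism $\Xi^\#:(\mathfrak X^\bullet,d_{\exp^*\Pi})\to(\Omega^\bullet,d)$ induced by the constant symplectic form $\Xi=\Pi^{-1}$ to convert the Schouten bracket into the exterior derivative of the $2$-form $\omega=-\Xi^\#\,\exp^*P^\#\,\Xi^\#$; computing $d\omega$ in the complex frame $dz_j, d\bar z_k$ then yields $\partial_{x_k}g_{ij}=\partial_{x_j}g_{ik}$ for $g=H\phi^{-1}$ with $\phi$ the \emph{Kahler} potential, and the symplectic-potential statement is obtained afterwards from the Legendre invariance of property $\mathcal I$ (Proposition \ref{pro:legendre}). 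You instead pass to action--angle coordinates so that $P$ becomes the constant structure, and compute the Schouten bracket directly; the moment-map Jacobian $H\psi=H\phi^{-1}$ enters as the coefficient matrix of $\Pi$, and the surviving $(\theta_i,\mu_j,\theta_k)$ components give total symmetry of $\partial_{\mu_k}(H\phi^{-1})_{ij}$, i.e.\ property $\mathcal I$ for the \emph{symplectic} potential directly. What the paper's route buys is that one never has to write the general coordinate formula for the Schouten bracket of two bivectors --- everything is an exterior derivative of a $(1,1)$-form --- and the complex frame makes the $T$-invariance bookkeeping automatic; what your route buys is that the Legendre step becomes a consistency check rather than a needed ingredient, since Darboux coordinates for $P$ are exactly the momentum coordinates. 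Your index verification (only the two-$\theta$, one-$\mu$ components survive, and they equal $\partial_{\mu_k}C_{ij}-\partial_{\mu_i}C_{jk}$) is correct, and together with the symmetry of $C=H\phi^{-1}$ this is indeed equivalent to the system (\ref{eq:inversible}).
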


% The existence of a such a Poisson commuting pair $[\Pi,P]=0$ would have geometric significance. One would be able to say that $\Pi$ lies in the boundary 
% of the space of Hamiltonian Kahler structures on $(X,\mathbb{T})$ because  all bivector fields in the pencil
% \[(1-t)\Pi+tP,\quad t\in [0,1],\]
% would define Poisson structures which for $t\neq 0$ would be inverses of Hamiltonian Kahler forms.
In complex dimension one a totally real toric Poisson structure and (the inverse of) a Hamiltonian Kahler form always Poisson commute
because the commutator is a field of trivectors on a surface; equivalently, if we use
Theorem \ref{thm:propertyI-Poisson} this corresponds to the fact that all strictly 
convex functions of one variable have property $\mathcal{I}$. As it will turn out Theorem \ref{thm:propertyI-analytic} will imply 
that in the real analytic category such a commuting pair is the Cartesian product of one dimensional commuting pairs
% \footnote{Theorem \ref{thm:no-analytic-intro}
% puts in 
% perspective the construction in \cite{Si} of deformations of Hamiltonian Kahler forms into toric Poisson structures for projective spaces of any dimension.}
:

\begin{theorem}\label{thm:no-analytic-intro} Let $(X,\T)$ be a projective toric Poisson variety endowed with a
 totally real toric Poisson structure $\Pi$ which Poisson commutes with a real analytic Hamiltonian Kahler structure $\sigma$. Then  $(X,\T)$ is a Cartesian product of projective lines and both 
$\Pi$ and $\sigma$ factorize.
\end{theorem}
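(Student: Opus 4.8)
The plan is to reduce the statement to the PDE-theoretic results of the paper and then to an elementary arithmetic rigidity property of Delzant polytopes. First I would invoke Theorem~\ref{thm:propertyI-Poisson}. Fixing a basis of the lattice in $\tt$ for which $\Pi$ is the standard quadratic form, the hypothesis $[\Pi,P]=0$ is equivalent to the statement that the strictly convex function $\phi$ that corresponds to $\sigma$ has property $\mathcal{I}$. Since $\sigma$ is real analytic, so is $\phi$, and Theorem~\ref{thm:propertyI-analytic} then yields that $\phi$ has (constant) orthogonal characteristics: there is a frame $\{e_1',\dots,e_n'\}$ of $\R^n$, orthonormal for the inner product defined by $\Pi$, with linear coordinates $y_1,\dots,y_n$ along it, in which $\phi$ splits as a sum of strictly convex functions of one variable,
\[
\phi=\phi_1(y_1)+\cdots+\phi_n(y_n).
\]

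Next I would pass to the moment polytope $\Delta\subset\tt^*$, which is bounded because $(X,\T)$ is projective, and which is the gradient image of $\phi$. For a split $\phi$ the gradient is $(\phi_1'(y_1),\dots,\phi_n'(y_n))$, so in the coordinates dual to the $y_i$ its image is the open box $\prod_i I_i$, with $I_i$ the bounded range of $\phi_i'$. Since the $y$-frame and the lattice frame are both orthonormal for the inner product of $\Pi$, the change between them is an isometry; hence $\Delta$ is the orthogonal image of a box, that is, a rectangular parallelepiped whose $n$ facet normals are mutually orthogonal.

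The heart of the argument is an arithmetic rigidity step. Projectivity means $\Delta$ is a Delzant polytope for the standard lattice $\Z^n$, in which the lattice frame is orthonormal. At any vertex the primitive inward edge vectors $v_1,\dots,v_n$ form a $\Z$-basis of $\Z^n$, while the rectangular shape forces them to be mutually orthogonal. With $V=[v_1|\cdots|v_n]$, orthogonality gives $V^{T}V=\mathrm{diag}(|v_1|^2,\dots,|v_n|^2)$ and unimodularity gives $(\det V)^2=1$, whence $\prod_i|v_i|^2=1$; as each $v_i$ is a nonzero integer vector this forces $|v_i|^2=1$ for every $i$. Thus $\{v_i\}$ is a signed permutation of the standard basis, so the orthogonal-characteristics frame $\{e_i'\}$ coincides, up to sign and reordering, with the lattice frame. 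After reindexing, $\phi=\sum_i\phi_i(x_i)$ in the lattice coordinates and $\Delta=\prod_i I_i$ is an axis-aligned box for $\Z^n$; together with the factorwise Guillemin boundary conditions this exhibits $(X,\T)$ as the product of the one-dimensional projective toric varieties attached to the intervals $I_i$, each of which is $\CP^1$. Under the same splitting the standard quadratic form $\Pi$ and the form $\sigma$ factorize, as claimed.

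I expect the main obstacle to be the reconciliation of the two a priori unrelated orthonormal frames: the one produced by the analytic result on orthogonal characteristics and the integral lattice frame in which $\Pi$ is standard. The rigidity computation above forces them to coincide, but making this rigorous requires verifying that the split $\phi$ really produces a rectangular moment box, that projectivity delivers a genuinely bounded Delzant polytope, and that the boundary behaviour is inherited factor by factor, so that each one-dimensional factor is the smooth compact $\CP^1$ rather than a non-compact or singular curve.
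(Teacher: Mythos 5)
Your overall strategy --- split the potential via Theorem \ref{thm:propertyI-analytic}, observe that the moment polytope is a box with mutually orthogonal facet normals, then invoke Delzant rigidity --- is the same as the paper's, but the step you single out as the heart of the argument contains a genuine error. You assume there is a basis of the integral lattice of $\tt$ in which $\Pi$ is the standard quadratic form, and later that the weight lattice is $\Z^{n}$ in the dual adapted Darboux coordinates (``in which the lattice frame is orthonormal''). Theorem \ref{thm:propertyI-Poisson} only supplies an adapted Darboux basis of the \emph{Lie algebra}; nothing forces it to be a lattice basis, and in general no orthonormal lattice basis exists. For instance, on $\CP^{1}\times\CP^{1}$ a totally real toric Poisson structure whose inner product is $\mathrm{diag}(1,2)$ in the lattice basis satisfies all hypotheses (with a product real-analytic Kahler form), yet the only integer vectors of norm one for $x_1^{2}+2x_2^{2}$ are $(\pm1,0)$. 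Consequently ``unimodularity gives $(\det V)^{2}=1$'' is unjustified --- $|\det V|$ equals the covolume of the weight lattice in your Euclidean coordinates, not $1$ --- and your conclusion $|v_i|^{2}=1$, i.e.\ that the orthogonal-characteristics frame coincides with the lattice frame up to signed permutation, is false in the example above. The paper only obtains, and only needs, that the Darboux basis is an integral basis \emph{up to rescaling of each member}.

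The gap is repairable, essentially along the lines the paper follows. Rationality of the box directions comes for free: the edges of $\Delta$ at a vertex are edges of a rational (Delzant) polytope and are parallel to the dual Darboux axes, so no arithmetic rigidity is needed. For the product structure one does not need the edge vectors to be Euclidean-unit: being bounded by $n$ pairs of parallel hyperplanes is an affine-invariant property, so after the Delzant normalization (a vertex to the origin, its adjacent facets to the coordinate hyperplanes) $\Delta$ becomes an axis-parallel box in integral coordinates, whose fan is that of $\CP^{1}\times\cdots\times\CP^{1}$. One must then check that the linear part of this normalization is a permutation composed with a signed rescaling of each direction, so that $\phi$ remains split in integral coordinates and $\sigma$ and $\Pi$ are projectable to each $\CP^{1}$ factor; your appeal to ``factorwise Guillemin boundary conditions'' elides exactly this point, which occupies the second half of the paper's proof via toric charts.
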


We shall also describe a family of strictly convex functions which satisfy property $\mathcal{I}$ but which do not have orthogonal characteristics. 
We will use it to construct Hamiltonian Kahler forms in certain ($T$-invariant) regions of toric varieties. These regions can be thought of as the result
of gluing to a ($T$-round)  0-handle several ($T$-round) 1-handles. An illustration of the construction is the following:

\begin{proposition}\label{pro:cp2} Let $U\subset \mathbb{C}P^{2}$ be the complement of small $T^{2}$-invariant neighborhoods of $[1:0:0]$ and $[1:0:1]$. Then there
exist a totally real toric Poisson structure on $\mathbb{C}P^{2}$ and a Hamiltonian Kahler form on $U$ which  
Poisson commute.
\end{proposition}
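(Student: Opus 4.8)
The plan is to reduce the construction to producing a single strictly convex potential with property $\mathcal{I}$ on an explicit planar region, and then to build such a potential by hand out of the non-generic solutions. First I would fix on $\mathbb{C}P^{2}$ the totally real toric Poisson structure $\Pi$ which, in a basis of $\mathrm{Lie}(T)\cong\R^{2}$, is the standard quadratic form; by Theorem~\ref{thm:propertyI-Poisson} a Hamiltonian Kahler form $\sigma$ on a $T^{2}$-invariant open set $U$ whose inverse Poisson structure commutes with $\Pi$ is the same datum as a strictly convex function $\phi$ with property $\mathcal{I}$ on the domain corresponding to $U$, subject to the smoothness (Guillemin) boundary conditions along those facets of the Delzant triangle of $\mathbb{C}P^{2}$ that meet $U$ \cite{Gui}. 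Thus the whole statement becomes: exhibit such a $\phi$.

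Next I would set up the toric bookkeeping. The Delzant polytope $\Delta$ of $\mathbb{C}P^{2}$ is a triangle; $[1:0:0]$ is one of its vertices, while $[1:0:1]$ sits in the relative interior of the edge $\{z_{1}=0\}$. Removing small $T^{2}$-invariant neighborhoods of these two points excises from $\Delta$ a corner at that vertex together with a notch in the middle of that edge, and the resulting region $\Delta_{U}$ is the moment image of $U$. The point of recording this is that $\Delta_{U}$ retains the two remaining vertices and the hypotenuse, and is assembled, in the handle picture, from a central round $0$-handle to which $1$-handles run out to the surviving facets.

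The core of the argument is the construction of $\phi$ from the non-generic family. On a neighborhood of each surviving facet I would take $\phi$ to have orthogonal characteristics with one characteristic direction normal to that facet, a choice compatible with both the Guillemin condition and property $\mathcal{I}$, and near each surviving vertex the two incident blocks would be chosen in a common orthogonal frame. These blocks are then glued across regions on which the Hessian is round, i.e. lies in the degenerate (scalar) stratum of $\cP$; because on that stratum every direction is an eigendirection, the constant orthogonal frame is free to rotate from one block to the next without destroying property $\mathcal{I}$. Concretely: over the round stratum the fibre of $\pi|_{\cC}$ is a full $\mathrm{SO}(2)$-orbit and, by Proposition~\ref{pro:Cartan}, the Levi-Civita foliation on $\cC$ is a left translate of the scalar part of $\cD$ along which $\mathrm{SO}(2)$ acts, so the horizontal lift of a transition curve can be kept tangent to $\cC$ (Theorem~\ref{thm:differential-relation}) while the frame of the orthogonal characteristics turns. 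I would then verify that the glued $\phi$ is globally $C^{\infty}$ and strictly convex, that it solves \eqref{eq:inversible} throughout, and that it meets the Guillemin conditions on the facets inside $U$, so that $\sigma$ extends smoothly across the divisors meeting $U$.

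The main obstacle is precisely this gluing step: simultaneously keeping $\phi$ strictly convex and of class $\mathcal{I}$ while the characteristic frame rotates through the round transition, and matching the prescribed boundary jets on the facets, which is where the interaction of the stratification of $\cP$, the foliation of Proposition~\ref{pro:Cartan} and the projection $\pi|_{\cC}$ must be used quantitatively. Finally I would explain why the two excisions are forced: at the vertex $[1:0:0]$ the two incident facets, joined through a corner, would pin down a single global orthogonal frame and hence force, by the rigidity behind Theorem~\ref{thm:no-analytic-intro}, a product structure on all of $\mathbb{C}P^{2}$, which is impossible since $\mathbb{C}P^{2}$ is not a product of projective lines; the notch at $[1:0:1]$ plays the analogous role of cutting the edge $\{z_{1}=0\}$ into two pieces that may then carry different frames. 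Removing neighborhoods of these two points is exactly what lets the construction close up.
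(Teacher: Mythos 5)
Your proposal follows essentially the same route as the paper: Theorem \ref{thm:propertyI-Poisson} reduces the statement to exhibiting a strictly convex potential with property $\mathcal{I}$ on the moment image of $U$ regarded as a polytope with 1-handles, and the potential is exactly the bifurcating function of Proposition \ref{pro:bifurcations} (a multiple of the standard quadratic form on the round $0$-handle, split along the primary characteristic on each $1$-handle) with the Guillemin boundary behaviour imposed as in Theorem \ref{thm:handles}. The one step you flag as the main obstacle --- smoothness and property $\mathcal{I}$ across the transition --- is resolved in the paper without any quantitative use of the stratification, simply by requiring each $\phi_l$ to be tangent to $ky_1^{2}$ to infinite order at the attaching hyperplane, so that property $\mathcal{I}$ holds separately on the closures of the pieces and hence on their union.
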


The structure of this paper is as follows. Section \ref{sec:constant-characteristics} describes how matrices with orthogonal columns are used 
to define the family of differential relations of second order with constant coefficients
whose solutions we call functions with (constant) orthogonal characteristics; we also discuss why they have property $\mathcal{I}$.
In Section \ref{sec:2dimensions} we do the analysis of the system of third order PDEs (\ref{eq:inversible}) for strictly convex functions of two variables using jet spaces.
The viewpoint of Hessian metrics is introduced in Section \ref{sec:Hessian}. Property $\mathcal{I}$ is translated
as symmetry of the Christoffel symbols, an algebraically simpler condition which allows to analyze the interaction 
of property $\mathcal{I}$ with the Legendre transform. Section \ref{sec:frame-bundle}  describes how the  universal orthogonal frame bundle offers
the appropriate setting for the geometric analysts of property $\mathcal{I}$. We analyze the map $\pi:(\cC,\nabla)\to \cP$ from the submanifold of orthogonal
matrices with the restriction of the universal Levi-Civita connection onto the manifold of positive matrices; this is the our replacement 
of the subsets defined by property $\mathcal{I}$ in the space of jets of order three and two with the restriction of the Cartan connection.
Section \ref{sec:diferential-relation} contains our main results: Firstly, the description of property $\cI$ as a differential relation 
related to the submanifold of orthogonal matrices $\mathcal{C}\subset (\Gl,\nabla)\to \mathcal{P}$. Secondly,  sufficient conditions for a Hessian metric with 
property $\cI$ to come from a strictly convex function with orthogonal characteristics. In Section \ref{sec:bifurcations} we describe
a family of strictly convex functions which have property $\cI$ but do not have in general orthogonal characteristics. The domains
of definition of its members are what we call polytopes with 1-handles. Despite polytopes with 1-handles not being convex in general, we show 
that the family is invariant under Legendre transform. Section \ref{sec:Poisson-commuting} contains our applications to Poisson geometry. We explain
how on a smooth toric variety the Poisson commuting equation for a totally real toric Poisson structure and for (the inverse of) a Hamiltonian Kahler form can be 
rewritten as property $\cI$ for either the Kahler or the symplectic potential \cite{Gui} of the latter form. That 
allows us to conclude that in the real analytic
category any such commuting pair  must be the Cartesian product of commuting pairs on projective lines. We also use 
the family introduced in Section \ref{sec:bifurcations} to construct commuting pairs
on certain topologically non-trivial regions of toric varieties (which are not Cartesian products). 
 
% describing  strictly convex functions with property $\mathcal{I}$ as explicilty as possible -- the strictly convex functions with property $\mathcal{I}$ ---  and i partuc for strictly convex functiond $\phi$:
% \begin{equation}\label{eq:inversible}
%  \mathcal{I})\quad \frac{\partial}{\partial x_k}{H\phi^{-1}}_{ij}-\frac{\partial}{\partial x_j}{H\phi^{-1}}_{ik}=0,\quad 1\leq i,j,k \leq n.
% \end{equation}

The author is grateful to Max Planck Institute for Mathematics in Bonn for its hospitality and finantial support.

\section{Solutions with orthogonal characteristics}\label{sec:constant-characteristics}

It is possible to construct strictly convex functions with property $\mathcal{I}$ by elementary means. 
\begin{enumerate}[(a)]
 \item Every strictly convex function of one variable has property $\mathcal{I}$.
 \item If $\phi_1(x_1),\dots,\phi_n(x_n)$ are strictly convex, then 
$\phi_1(x_1)+\cdots+\phi_n(x_n)$ has property $\mathcal{I}$ in the product of the corresponding intervals.
\item  If $\phi(x)$ has property $\mathcal{I}$ in $\Omega$ and $B\in \mathrm{O}(n)$ is an orthogonal transformation, then $\phi(Bx)$ has property $\mathcal{I}$ 
in $B^{-1}(\Omega)$. This is because 
\[H\phi(Bx)=B^{T}H\phi(x)B\] and, therefore, if $H\phi(x)^{-1}$ is the Hessian of $\psi(x)$, then $(H\phi(Bx))^{-1}$ is the Hessian of 
$\psi(Bx)$.
\end{enumerate}

A function is (locally) of the form  $\phi=\phi_1(x_1)+\cdots+\phi_n(x_n)$ if and only if it is a solution of the  system of second order PDEs
\begin{equation}\label{eq:axis-characteristics}
\frac{\partial^{2} \phi}{\partial x_i\partial x_j}=0,\quad 1\leq i<j\leq n.
\end{equation}
The solutions of the system have (constant) characteristics given by the collection of axis.
This information can be used to rewrite (\ref{eq:axis-characteristics})
in a more geometric fashion. For any $n\times n$ matrix $A$ one can define a differential operator of order two on functions with values on matrix-valued functions
by the 
following recipe: 
\begin{equation}\label{eq:constant-characteristics}
\mathcal{L}^{2}_A\phi:=A^{T}H\phi A. 
\end{equation}
Equivalently, the $ij$-th component is the Lie derivative of $\phi$ with respect to the (constant) vector field defined by the $i$-th column of $A$,
followed by the Lie derivative with respect to the vector field defined by the $j$-th column.

Let $\mathcal{D}$ denote the set of diagonal matrices with strictly positive entries and let $\mathcal{D}(\Omega)$ denote smooth functions 
on $\Omega$ with values on $\mathcal{D}$. It follows that a function $\phi$ is strictly convex and satisfies (\ref{eq:axis-characteristics})
if and only if 
$\mathcal{L}^{2}_\mathrm{I}\phi\in \mathcal{D}(\Omega)$,
where $\mathrm{I}$ is the identity matrix. 
Let $\mathcal{C}$ denote the set of matrices with orthogonal columns.
\begin{definition}\label{def:orthogonal-characteristics} A function $\phi\in C^{\infty}(\Omega)$ has {\bf orthogonal characteristics} if there 
exists  $C\in \mathcal{C}$ such that
 \begin{equation}\label{eq:orthogonal-characteristics}
 \mathcal{L}^{2}_C\phi\in \mathcal{D}(\Omega).
\end{equation}
 \end{definition}
Because $\mathcal{D}$ is invariant by conjugation by permutation matrices, in Definition \ref{def:orthogonal-characteristics} we may assume
that $C$ has positive determinant. We will abuse notation and use  $\mathcal{C}$ to refer to 
matrices with orthogonal columns and positive determinant. 

\begin{lemma}\label{lem:orthogonal-characteristics} If $\phi\in C^{\infty}(\Omega)$ has orthogonal characteristics,
 then $\phi$ is an strictly convex function with property $\mathcal{I}$.
More precisely, $\phi$ is the composition of an orthogonal transformation with a function with trivial mixed partial derivatives. 
\end{lemma}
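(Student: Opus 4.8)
The plan is to reduce the statement to the already-established facts (b) and (c) by an orthogonal change of variables. First I would dispose of the fact that the columns of $C$ need not be unit vectors. Factor $C=Q\Lambda$, where $\Lambda\in\mathcal{D}$ is the diagonal matrix of column norms and $Q$ is obtained by normalizing the columns of $C$; since the columns of $C$ are orthogonal and $\det C>0$, the matrix $Q$ lies in $\mathrm{SO}(n)$. Then $\mathcal{L}^{2}_C\phi=\Lambda\,(\mathcal{L}^{2}_Q\phi)\,\Lambda$, and since $\Lambda$ is invertible and diagonal the matrix $\Lambda M\Lambda$ is diagonal with strictly positive entries if and only if $M$ is; hence the hypothesis $\mathcal{L}^{2}_C\phi\in\mathcal{D}(\Omega)$ is equivalent to $\mathcal{L}^{2}_Q\phi\in\mathcal{D}(\Omega)$. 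Thus I may assume from the start that $C=Q$ is orthogonal with positive determinant.

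Next I would pass to the rotated coordinates. Set $\psi(y):=\phi(Qy)$ on $Q^{-1}(\Omega)$. The chain rule gives $H\psi(y)=Q^{T}(H\phi)(Qy)\,Q=(\mathcal{L}^{2}_Q\phi)(Qy)$, which by the reduced hypothesis is diagonal with strictly positive entries at every point. In particular $H\psi$ is positive definite, so $\psi$ is strictly convex; being diagonal, $H\psi$ forces all mixed partials $\partial_i\partial_j\psi$ with $i\ne j$ to vanish, i.e. $\psi$ solves the system (\ref{eq:axis-characteristics}). Locally, therefore, $\psi=\psi_1(y_1)+\cdots+\psi_n(y_n)$ with each $\psi_i$ strictly convex, which is exactly the form to which item (b) applies, so $\psi$ has property $\mathcal{I}$ near every point; since (\ref{eq:inversible}) is a pointwise condition this local statement suffices. (Alternatively one verifies (\ref{eq:inversible}) for $\psi$ directly from $(H\psi^{-1})_{ij}=\delta_{ij}/\partial_i^{2}\psi$ together with $\partial_k\partial_i^{2}\psi=\partial_i(\partial_k\partial_i\psi)=0$ for $k\ne i$.)

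Finally I would transfer everything back to $\phi$. Since $\phi(x)=\psi(Q^{T}x)$ with $Q^{T}\in\mathrm{O}(n)$, strict convexity of $\phi$ follows from $H\phi(x)=Q\,H\psi(Q^{T}x)\,Q^{T}$ being congruent to a positive definite matrix, and property $\mathcal{I}$ for $\phi$ follows from property $\mathcal{I}$ for $\psi$ by the orthogonal invariance recorded in item (c) (applied with $B=Q^{T}$). The presentation $\phi=\psi\circ Q^{T}$, with $Q^{T}$ orthogonal and $\psi$ having vanishing mixed partials, is precisely the ``more precisely'' clause of the statement.

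The computations are routine; the only point that needs genuine care is the first reduction, namely checking that allowing the columns of $C$ to carry arbitrary positive norms adds nothing beyond the genuinely orthogonal case. This is what the identity $\mathcal{L}^{2}_C\phi=\Lambda\,(\mathcal{L}^{2}_Q\phi)\,\Lambda$ settles. Everything after that is the change-of-variables formula for the Hessian combined with items (b) and (c).
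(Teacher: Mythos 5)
Your proof is correct and follows essentially the same route as the paper's: factor $C$ as an orthogonal matrix times a positive diagonal matrix, observe that conjugation by the diagonal factor is harmless so the Hessian diagonalizes in the rotated coordinates, conclude locally that $\phi\circ Q$ is a sum of strictly convex one-variable functions, and transfer property $\mathcal{I}$ back via orthogonal invariance. Your write-up is a bit more explicit about why the local splitting suffices, but the argument is the same.
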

\begin{proof}
 Because $C$ has orthogonal columns we can factor $C= B \Lambda$, $\Lambda\in \mathcal{D}$, $B\in \mathrm{SO}(n)$.
 From $\mathcal{L}^{2}_C\phi=C^{T}H\phi C=\Lambda B^{T}H\phi B \Lambda$
   and (\ref{eq:orthogonal-characteristics}) we deduce that 
 $B^{T}H\phi B\in \mathcal{D}(\Omega)$,
 or, equivalently, that $H\phi(Bx)\in \mathcal{D}(\Omega)$.
 Thus, locally 
 \[\phi(Bx)=\phi_1(x_1)+\cdots +\phi_n(x_n),\quad x=(x_1,\dots,x_n)\in B^{-1}(\Omega),\]
 and $\phi_i$ is strictly convex.  Therefore $\phi(Bx)$ is strictly convex and has property $\mathcal{I}$, and so the same occurs for
 $\phi(x)=\phi(B^{-1}(Bx))$.
\end{proof}

\section{The two-dimensional case}\label{sec:2dimensions}

We would like to know whether there exist strictly convex functions with property  $\mathcal{I}$ which do not have orthogonal characteristics. For that
we find 
convenient to discuss the algebraic structure of the system of PDEs (\ref{eq:inversible}). This should be easier in the lowest non-trivial dimension.

We shall denote partial derivatives of a function $\phi(x)$, $x=(x_1,x_2)\in \Omega\subset \R^{2}$, by means 
of subindices which follow a comma. We introduce independent variables to parametrize (homogeneous) jet spaces of order two and three:
% the The invertibility condition (\ref{eq:inversible}) reads:
% 
% 
% \[\begin{cases} 
%    \phi_{,222}{\Delta}-\phi_{,22}\Delta_{,2}+\phi_{,112}{\Delta}-\phi_{,12}\Delta_{,1} & =  0 \\
%    -\phi_{,122}{\Delta}+\phi_{,12}\Delta_{,2}-\phi_{,111}{\Delta}+\phi_{,11}\Delta_{,1} & =  0 
%   \end{cases},\]
%   where $\Delta$ is the determinant of $H\phi$.
% Let us introduce the following variables for (homogeneous) jets of order 2 and 3:
\[\chi=\phi_{,11},\,\tau=\phi_{,12},\,\zeta=\phi_{,22},\,\upsilon=\phi_{,111},\,\nu=\phi_{,112},\,\omega=\phi_{,122},\,\xi=\phi_{,222}.\]
Strictly convex functions correspond to the open subset   $\chi\zeta-\tau^{2}>0$, $\chi+\zeta>0$. The system of PDEs (\ref{eq:inversible})
corresponds to the common solutions of the degree three homogeneous 
polynomials:
\begin{equation}\label{eq:jet-variety1}
 \begin{cases} 
   \xi(\chi\zeta-\tau^{2})-\zeta(\nu\zeta+\chi\xi-2\tau\omega)+\nu(\chi\zeta-\tau^{2})-\tau(\upsilon\zeta+\chi\omega-2\tau\nu) & =  0 \\
   -\omega(\chi\zeta-\tau^{2})+\tau(\nu\zeta+\chi\xi-2\tau\omega)-\upsilon(\chi\zeta-\tau^{2})+\chi(\upsilon\zeta+\chi\omega-2\tau\nu) & =  0
  \end{cases}
\end{equation}

\begin{lemma}\label{lem:jet-variety} Strictly convex functions with property $\mathcal{I}$ correspond in the space of jets of order three of functions
in the plane 
to an open subset of an intersection of quadrics:
% (which fibers trivially over the jets of order 1)
\begin{equation}\label{eq:jet-variety2}
 \begin{cases} 
  (\zeta-\chi)\nu+\tau(\upsilon-\omega) & =  0 \\
 (\zeta-\chi)\omega+\tau(\nu-\xi) & =  0
  \end{cases},\quad \chi\zeta-\tau^{2}>0,\,\,\chi+\zeta>0
\end{equation}
% Its non-smooth locus is given by 
% \[ \begin{cases}
%  \zeta-\chi &= 0\\  \tau & =0 \\ \omega(\upsilon-\omega)-\nu(\nu-\xi) & =0
%   \end{cases},
% \quad \chi\zeta-\tau^{2}\neq 0.\]
\end{lemma}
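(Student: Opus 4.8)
The plan is to check directly that, on the strictly convex region $\chi\zeta-\tau^{2}>0$, the two cubic relations of (\ref{eq:jet-variety1}) are obtained from the two quadratic relations of (\ref{eq:jet-variety2}) by multiplication by an invertible matrix, so that the two systems cut out the same locus; the claimed openness is then just the inherited condition $\chi\zeta-\tau^{2}>0,\ \chi+\zeta>0$.

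First I would make the passage from (\ref{eq:inversible}) to the cubics explicit. For $n=2$ the only nontrivial instances of (\ref{eq:inversible}) are those with $\{j,k\}=\{1,2\}$ and $i\in\{1,2\}$, giving exactly two equations. Writing $D=\chi\zeta-\tau^{2}$ for the Hessian determinant, we have
\[
H\phi^{-1}=\frac{1}{D}\begin{pmatrix}\zeta & -\tau\\ -\tau & \chi\end{pmatrix},
\]
and the first-order jet relations $\chi_{,1}=\upsilon$, $\chi_{,2}=\tau_{,1}=\nu$, $\tau_{,2}=\zeta_{,1}=\omega$, $\zeta_{,2}=\xi$ let me compute $\partial_{x_1}D=\upsilon\zeta+\chi\omega-2\tau\nu$ and $\partial_{x_2}D=\nu\zeta+\chi\xi-2\tau\omega$, which are precisely the two parenthesised quadratics appearing in (\ref{eq:jet-variety1}). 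Differentiating the entries of $H\phi^{-1}$ by the quotient rule and clearing the common factor $1/D^{2}$ (legitimate since $D>0$ on the strictly convex region) turns the two instances of (\ref{eq:inversible}) into the two cubics, which I name $C_{1},C_{2}$, displayed in (\ref{eq:jet-variety1}); this is bookkeeping with no surprises.

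The heart of the argument is the algebraic identity
\[
\begin{pmatrix}C_{1}\\ C_{2}\end{pmatrix}
=\begin{pmatrix}-\zeta & \tau\\ \tau & -\chi\end{pmatrix}
\begin{pmatrix}E_{1}\\ E_{2}\end{pmatrix},
\qquad
E_{1}=(\zeta-\chi)\nu+\tau(\upsilon-\omega),\quad
E_{2}=(\zeta-\chi)\omega+\tau(\nu-\xi),
\]
where $E_{1},E_{2}$ are the left-hand sides of (\ref{eq:jet-variety2}). I would establish it by expanding $C_{1}$ and $C_{2}$: several cubic monomials (the $\chi\zeta\xi$ terms in $C_{1}$ and the $\chi\zeta\upsilon$ terms in $C_{2}$) cancel outright, and the remainder regroups as $-\zeta E_{1}+\tau E_{2}$ and $\tau E_{1}-\chi E_{2}$, respectively. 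The coefficient matrix is nothing but $-\operatorname{adj}(H\phi)$, so its determinant is $\chi\zeta-\tau^{2}=D$.

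The conclusion is then immediate: on the strictly convex locus $D>0$ the coefficient matrix is invertible, so $C_{1}=C_{2}=0$ if and only if $E_{1}=E_{2}=0$. Hence the third-order jets of strictly convex functions with property $\mathcal{I}$ are exactly the points of the quadric intersection (\ref{eq:jet-variety2}) lying over the convex cone, which is the asserted open subset. The only real obstacle is the expansion behind the displayed matrix identity; it is entirely mechanical, the single point worth isolating being the recognition that the coefficient matrix is the negated adjugate of the Hessian, which is what forces its determinant to equal $D$ and makes the equivalence hold precisely on the strictly convex region.
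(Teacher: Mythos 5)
Your argument is correct, and I verified the displayed identity $(C_1,C_2)^T=\bigl(\begin{smallmatrix}-\zeta & \tau\\ \tau & -\chi\end{smallmatrix}\bigr)(E_1,E_2)^T$; it is essentially the paper's proof, which likewise reduces the equivalence of the cubic and quadric systems to the invertibility of a $2\times 2$ matrix of determinant $\chi\zeta-\tau^{2}$. The only (cosmetic) difference is that the paper reaches this by solving a linear system in the auxiliary quantities $\nu\zeta+\chi\xi-2\tau\omega$ and $\upsilon\zeta+\chi\omega-2\tau\nu$ (the partial derivatives of the Hessian determinant) via Cramer's rule, whereas you exhibit the factorization directly.
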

\begin{proof}
We interpret the equations of the cubics (\ref{eq:jet-variety1}) as a (non-homogeneous) linear system with indeterminates $\nu\zeta+\chi\xi-2\tau\omega$
and $\upsilon\zeta+\chi\omega-2\tau\nu$:
\[ \begin{cases} 
  \zeta(\nu\zeta+\chi\xi-2\tau\omega)+\tau(\upsilon\zeta+\chi\omega-2\tau\nu) & =  (\xi+\nu)(\chi\zeta-\tau^{2}) \\
  \tau(\nu\zeta+\chi\xi-2\tau\omega)+\chi(\upsilon\zeta+\chi\omega-2\tau\nu) & =(\upsilon+\omega)(\chi\zeta-\tau^{2})   
  \end{cases}
  \]
In the open subset defined by $\chi\zeta-\tau^{2}\neq 0$ we obtain the equivalent relations
\[ \begin{cases} 
  \nu\zeta+\chi\xi-2\tau\omega & = \begin{vmatrix}
                                    \xi+\nu & \tau \\
                                   \upsilon+\omega & \chi
                                 \end{vmatrix} \\
   \upsilon\zeta+\chi\omega-2\tau\nu & =  \begin{vmatrix}
                                  \zeta &  \xi+\nu \\
                                  \tau & \upsilon+\omega
                                 \end{vmatrix} \\                              
  \end{cases}\Longleftrightarrow \begin{cases} 
  (\zeta-\chi)\nu+\tau(\upsilon-\omega) & =  0 \\
   (\zeta-\chi)\omega+\tau(\nu-\xi) & =  0
  \end{cases}
  \]
%  The computation of the non-smooth locus is straightforward.
 \end{proof}

To each $[a:b]\in \mathbb{R}P^{1}$ one can associate the following second order PDE with constant coefficients for strictly convex functions:
\begin{equation}\label{eq:hyperbolic}
a \phi_{,11}-a \phi_{,22}=b \phi_{,12}.
\end{equation}
It corresponds to open subset of a hyperplane of the space of jets of order two
\begin{equation}\label{eq:hyp-pencil}
a(\chi-\zeta)-b \tau =0,\quad \chi\zeta-\tau^{2}> 0,\,\, \chi+\zeta>0
\end{equation}
whose  first prolongation is 
\begin{equation}\label{eq:first-prolongation}
\begin{cases} 
   a(\chi-\zeta)-b \tau & =0 \\
    a (\upsilon-\omega)-b \nu & =  0 \\
   a(\nu-\xi)-b \omega & =  0 
  \end{cases},\quad \chi\zeta-\tau^{2}> 0,\,\, \chi+\zeta>0.
\end{equation}

%  A strictly convex function $\phi\in C^{\infty}(\Omega)$ is called {\bf regular} if the eigenvalues of its Hessian
%  are different. Equivalently, its second jet --- which is in the open subset $\chi\zeta-\tau^{2}> 0$ --- misses the base of the pencil 
% (\ref{eq:hyp-pencil}): $\chi-\zeta=\tau=0$.

\begin{proposition}\label{pro:reg-sols} Let $\phi\in C^{\infty}(\Omega)$ be a strictly convex function.
\begin{enumerate}
 \item If $\phi$ satisfies  (\ref{eq:hyperbolic})
 for some $[a:b]\in \mathbb{R}P^{1}$, then $\phi$ has property $\mathcal{I}$.
 \item If $\phi$ satisfies  (\ref{eq:hyperbolic})
 for more than one $[a:b]\in \mathbb{R}P^{1}$, then $\phi$ is --- up to a degree one polynomial --- a multiple of the standard quadratic form
 $x_1^{2}+x_2^{2}$.
 \item If $\phi$ has property $\mathcal{I}$ and its Hessian has simple eigenvalues, then $\phi$ satisfies  (\ref{eq:hyperbolic})
 for some $[a:b]\in \mathbb{R}P^{1}$.
 \end{enumerate}
\end{proposition}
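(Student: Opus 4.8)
The plan is to treat the three parts separately, exploiting the algebraic description of property $\mathcal{I}$ from Lemma \ref{lem:jet-variety} together with the observation that the two relations in \eqref{eq:jet-variety2} assemble into a single $1$-form in the Hessian data. Throughout I would write $p=\phi_{,11}-\phi_{,22}=\chi-\zeta$ and $q=\phi_{,12}=\tau$, so that \eqref{eq:hyperbolic} reads $ap-bq=0$, i.e. it asks for the projective class $[q:p]$, and the simple-eigenvalue hypothesis in the plane is exactly the condition that $(p,q)$ never vanishes, since the eigenvalues of $H\phi$ collide iff $(\chi-\zeta)^{2}+4\tau^{2}=0$.

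For part (1) I would argue geometrically via Lemma \ref{lem:orthogonal-characteristics}. Given $[a:b]$, choose $\theta$ with $[a:b]=[\sin 2\theta:2\cos 2\theta]$ and let $B\in\mathrm{SO}(2)$ be the rotation by $\theta$; a short computation shows that the off-diagonal entry of $B^{T}H\phi B$ equals $\tfrac12(\zeta-\chi)\sin 2\theta+\tau\cos 2\theta$, which vanishes precisely when \eqref{eq:hyperbolic} holds. Since $\phi$ is strictly convex, $B^{T}H\phi B$ is positive definite, hence lies in $\mathcal{D}(\Omega)$, so $\phi$ has orthogonal characteristics in the sense of Definition \ref{def:orthogonal-characteristics} and therefore property $\mathcal{I}$ by Lemma \ref{lem:orthogonal-characteristics}. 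Alternatively one can substitute the prolongation \eqref{eq:first-prolongation} directly into \eqref{eq:jet-variety2} and check both relations vanish, splitting into the cases $a\neq0$ and $a=0$.

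For part (2), suppose \eqref{eq:hyperbolic} holds for two distinct classes $[a_1:b_1]\neq[a_2:b_2]$. At each point this gives a homogeneous linear system in $(p,q)$ whose coefficient matrix has determinant $a_2 b_1-a_1 b_2\neq0$, forcing $p\equiv0$ and $q\equiv0$, i.e. $\phi_{,11}=\phi_{,22}$ and $\phi_{,12}=0$ on $\Omega$. Differentiating these two identities shows that all third derivatives of $\phi$ vanish, so on the connected set $\Omega$ the Hessian is a constant positive multiple $\kappa\,\mathrm{I}$ of the identity; integrating gives $\phi=\tfrac{\kappa}{2}(x_1^{2}+x_2^{2})$ up to a degree one polynomial, with $\kappa>0$ by strict convexity.

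Part (3) is the crux, and I expect the identification of the $1$-form to be the main obstacle. The key step is to notice, using $p_{,1}=\upsilon-\omega$, $p_{,2}=\nu-\xi$, $q_{,1}=\nu$, $q_{,2}=\omega$, that the two relations in \eqref{eq:jet-variety2} are, up to an overall sign, the two components of the $1$-form $p\,\d q-q\,\d p$; hence property $\mathcal{I}$ is equivalent to $p\,\d q-q\,\d p=0$ on $\Omega$. Under the simple-eigenvalue hypothesis $(p,q)$ is a nowhere-vanishing smooth map $\Omega\to\R^{2}\setminus\{0\}$, and the vanishing of $p\,\d q-q\,\d p$ says exactly that its angular component is locally constant; equivalently, on any open set where $p\neq0$ one has $\d(q/p)=0$. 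Thus the direction $[q:p]\in\R P^{1}$ is locally constant, and by connectedness of $\Omega$ it is a single class $[a:b]$, which is the parameter sought in \eqref{eq:hyperbolic}. Once the $1$-form reformulation is in hand the argument is short; the only care needed is the passage from local constancy on the overlapping charts where $p\neq0$ or $q\neq0$ to global constancy on the connected domain $\Omega$.
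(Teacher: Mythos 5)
Your proposal is correct and follows essentially the same route as the paper: part (1) via orthogonal characteristics and Lemma \ref{lem:orthogonal-characteristics} (or direct substitution of the prolongation), part (2) via the base of the pencil forcing all third derivatives to vanish, and part (3) by recognizing the two relations of \eqref{eq:jet-variety2} as the statement that the direction of the nowhere-vanishing field $(\phi_{,11}-\phi_{,22},\phi_{,12})$ is constant --- your $1$-form $p\,\d q-q\,\d p$ is just a repackaging of the paper's inner-product identities and constant slope function.
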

\begin{proof}
The set of equations (\ref{eq:hyperbolic}) are exactly those second order PDEs whose solutions have orthogonal characteristics. Therefore
item (1) is the specialization of Lemma \ref{lem:orthogonal-characteristics} to the two-dimensional case. Alternatively, 
item (1) follows from the inclusion of the solutions of  (\ref{eq:first-prolongation}) in the solutions of (\ref{eq:jet-variety2}).

If $\phi$ satisfies  (\ref{eq:hyperbolic})
 for more than one $[a:b]\in \mathbb{R}P^{1}$, then its second jet belongs to the base of the pencil (\ref{eq:hyp-pencil}): $\chi-\zeta=\tau=0$.
That is to say $\phi_{,12}=0$ and $\phi_{,11}=\phi_{,22}$. Therefore $0=\phi_{,221}=\phi_{,111}=\phi_{,112}=\phi_{,222}$.
Hence $\phi$ is a degree two polynomial whose homogeneous part of degree two equals $k(x_1^{2}+x_2^{2})$, $k>0$.

The Hessian $H\phi$ has two eigenvalues if and only if it misses the base of the pencil. 
Equivalently, the field of vectors in the plane $(\phi_{,22}-\phi_{11},\phi_{,12})\in \R^{2}$ has no zeroes. Therefore we can (locally) take the quotient 
of the components of the vector field to get a well-defined slope function. Property $\mathcal{I}$ 
as in (\ref{eq:jet-variety2}) can be rewritten
\[\begin{cases} \langle (\phi_{,22}-\phi_{11},\phi_{,12}),(\phi_{12},\phi_{,11}-\phi_{22})_{,1}\rangle & =0 \\
    \langle (\phi_{,22}-\phi_{11},\phi_{,12}),(\phi_{12},\phi_{,11}-\phi_{22})_{,2}\rangle & =0,
  \end{cases}\]
where $\langle\cdot,\cdot\rangle$ is the standard inner product. This 
implies that the slope function is constant, which
is exactly the second order PDE (\ref{eq:hyperbolic}) for some $[a:b]\in \mathbb{R}P^{1}$.
\end{proof}
Proposition \ref{pro:reg-sols} does not clarify whether strictly convex functions with property $\mathcal{I}$ and which do not have orthogonal characteristics
exist. As we shall discuss in  Section \ref{sec:bifurcations} such solutions exist: It is possible to start from  a multiple of the standard quadratic form in a subdomain of $\Omega$
  which  `bifurcates' into solutions to different equations in  (\ref{eq:hyperbolic})
  in other subsets of the domain $\Omega$.

% Summarizing,
% \begin{itemize}
%  \item as we already knew from Lemma \ref{lem:orthogonal-characteristics}  functions with orthogonal characteristics have property $\mathcal{I}$;
%  \item the partial converse is that a strictly convex function with property $\mathcal{I}$ whose Hessian has two eigenvalues must have orthogonal 
%  characteristics (for a unique set of orthogonal directions).
%  \item as we shall discuss in more detail in Section \ref{sec:bifurcations}, there exist strictly convex functions with property $\mathcal{I}$
%  which do not have orthogonal characteristics. It is possible to start from  a multiple of the standard quadratic form in a subdomain of $\Omega$
%  which  `bifurcates' into solutions to different equations in  (\ref{eq:hyperbolic})
%  in other subsets of the domain $\Omega$. 
% \end{itemize}

\begin{remark}\label{rem:Cartan-connection} (The Cartan connection on jet spaces)
The algebraic manipulation in item (3) in Proposition \ref{pro:reg-sols} has a geometric counter-part.
The requirement  on the Hessian corresponds to the regularity condition needed to identify solutions with holonomic sections with respect to 
 the Cartan connection: 
On the one hand, the subset of the jet spaces which corresponds to property $\mathcal{I}$ is not smooth; the 1-forms
\[\begin{split}\Xi_1 &=\nu(d\zeta-d\chi)+(\zeta-\chi)d\nu+\tau(d\upsilon-d\omega)+(\upsilon-\omega)d\tau,\\
     \Xi_2 & =\omega(d\zeta-d\xi)+(\zeta-\xi)d\omega+
\tau(d\nu-d\epsilon)+(\nu-\epsilon)d\tau
\end{split}
\]
are colinear in the subset $\omega(\upsilon-\omega)-\nu(\nu-\epsilon)=\chi-\zeta=\tau=0$.
On the other hand, the smooth locus of the intersection of quadrics fails to be transverse to the 
fibers of the projection onto jets of order two in the points over the base of the pencil.
% $\chi-\zeta=\tau=0$. 

The connection 1-forms one has to add when passing from jets of order two to jets of order three are:
\[\Theta_1=d\xi-\upsilon dx-\nu dy,\quad \Theta_2=d\tau-\nu dx-\omega dy,\quad \Theta_3=d\chi-\omega dx-\zeta dy.\]
The pullback foliation is defined by the 1-forms
\[K_1=(\xi-\chi)d\theta-\tau(d\xi-d\chi),\quad K_2=(\upsilon-\omega)d\nu-\nu(d\upsilon-d\omega),\quad K_3=(\nu-\epsilon)d\omega-\omega(d\nu-d\epsilon).\]
The equalities 
\[K_1=(\chi-\zeta)\Theta_2-\tau(\Theta_1-\Theta_3),\quad K_2-\frac{\nu^{2}}{\tau^{2}}K_1=-\frac{\nu}{\tau} \Xi_1,\quad 
K_3-\frac{\omega^{2}}{\tau^{2}}K_1=-\frac{\omega}{\tau} \Xi_2
\]
hold in the intersection of (\ref{eq:jet-variety2}) with the complement of the pullback of the base of the pencil.
% Therefore  strictly convex functions  
%   with property $\mathcal{I}$ and Hessian with one-dimensional eigen-lines are in correspondence with 
% sections of the jet spaces whose image is in the pullback of the complement of the base of the pencil and which are
% tangent to the Cartan connection.
Therefore holonomic sections in this subset are tangent to the pullback foliation. Hence their order two jet must be inside a hyperplane of the pencil.
\end{remark}

\begin{remark}\label{rem:legendre} (Orthogonal characteristics and Legendre transform) Let $\phi$ be an strictly 
convex function on a convex domain $\Omega\subset \R^{2}$. Its Legendre transform is an strictly convex function $\phi^{*}$
on a convex domain $\Omega^{*}$ which is related to $\Omega$ by a (Legendre) diffeomorphism. Let us assume that $\phi$ satisfies the
constant coefficients second order PDE: 
\[a \phi_{,11}+c \phi_{,22}-b \phi_{,12}=0,\quad [a:c:b]\in \mathbb{R}P^{2}.\]
Because $H\phi^{*}$ at $x\in\Omega^{*}$ equals $H\phi^{-1}$ at its related point in $\Omega$ we have the equality
\[a \phi_{,22}^{*}+c \phi_{,11}^{*}+b \phi_{,12}^{*}=0.\]
Thus the Legendre transform induces an involution in the parameter space of constant coefficients degree two homogeneous PDEs:
$[a:c:b]\mapsto [c:a:-b]$.
Its fixed point set is 
$[1:1:0]\cup [a:-a:b]\subset \mathbb{R}P^{2}$.
To the point $[1:1:0]$ corresponds the Laplace equation, which has no strictly convex solutions. The projective line
$[a:-a:b]$ parametrizes hyperbolic PDEs with orthogonal characteristics (\ref{eq:hyperbolic}).
Therefore if $\phi$ is a function  on the convex domain $\Omega$ with orthogonal characteristics, so its Legendre transform is.
This invariance property holds regardless of the 
dimension:
\[\mathcal{L}^{2}_C\phi\in \mathcal{D}(\Omega)\Longleftrightarrow  C^{T} (H\phi^{*})^{-1}C \in D(\Omega^{*})\Longleftrightarrow  
 C^{T} (H\phi^{*})C \in \cD(\Omega^{*}),
\]
where the first equivalence uses the relation between Hessian matrices of the original function and its Legendre transform,
and in the second equivalence 
we have inverted the matrices and we have used $C^{T}C\in \mathcal{D}$. 
\end{remark}

\section{Hessian metrics with symmetric Christoffel symbols}\label{sec:Hessian}
To generalize the results in Section \ref{sec:2dimensions} 
the complexities brought by the increase of dimension shall be dealt with by shifting the perspective to that of Hessian metrics.
% Lemma and \ref{lem:jet-variety} and  Proposition \ref{pro:reg-sols} to arbitrary dimensions we find difficult to work on jet spaces. 
% To deal with the algebraic complexity brought by the increase of dimensions we find convenient to shift the perspective to that of Hessian metrics.

A {\bf Hessian metric} on $\Omega\subset \R^{n}$ is a Riemannian metric obtained as the Hessian matrix of a (strictly convex) function 
on $\Omega$. Property $\mathcal{I}$ for strictly convex functions can be translated to a requirement for a Hessian metric: that its inverse
metric be Hessian as well. In such case we say that the given Hessian metric has property $\mathcal{I}$.

There is another natural differential condition on Hessian metrics which allows to formulate  in arbitrary dimensions the algebraic simplification
of property $\mathcal{I}$ described in Lemma \ref{lem:jet-variety}.  
For a Hessian metric $H\phi$ the Christoffel symbols of the first kind equal the partial derivatives of order three:
$\Gamma_{ijk}=\phi_{,ijk}$.
The Christoffel symbols (of the second kind) are
\[\Gamma_{kj}^{i}={H\phi^{-1}}_{il}\Gamma_{ljk}.\]
Let $[H\phi]_{,k}$ denote the partial derivative with respect to $k$ of the entries of the Hessian matrix. For each $1\leq k\leq n$ 
we define the \emph{Christoffel matrix} 
\[\Gamma_k:=\Gamma_{k \star }^{\bullet}={H\phi^{-1}}_{\bullet\circ}{[H\phi]_{,k}}_{\circ\star}=H\phi^{-1}[H\phi]_{,k}.\]

% \begin{definition} Let $\HH\phi$ be a Kahler metric on $\Omega$, a domain in which the  $\partial\bar{\partial}$-lemma holds. 
% We say that $\HH\phi$ is:
% \begin{enumerate}[(i)]
%  \item Fully symmetric if all the complex Christoffel matrices $\Gamma_k$ are symmetric.
%  \item Inversible if the Hermitian metric $h^{-1}$ on $\Omega$ is Kahler. The fundamental matrix 
%  of $h^{1}$ is $\HH\phi^{-1}$ and thus the inversibility condition is:
%  \[{[\HH\phi^{-1}]_i}_{jk}-{[\HH\phi^{-1}]_j}_{ik},\quad \forall i,j,k.\]
% \end{enumerate}
% If $\HH\phi$ is fully symmetric (resp. inversible) we say that it satisfies condition $(\mathcal{S})$ (resp. $(\mathcal{I})$)
% or that $\mathcal{S}\phi=0$ (resp. $\mathcal{I}\phi=0$). 
% \end{definition}
\begin{definition}\label{def:fully-symmetric} A Hessian metric $H\phi$ on $\Omega$
has {\bf symmetric Christoffel symbols} if the 
Christoffel symbols (of the second type) are symmetric on the three indices. Equivalently, if its Christoffel matrices are symmetric.
% following condition holds:
% \[\mathcal{S})\quad  \Gamma_k\,\, \mathrm{is}\,\, \mathrm{symmetric},\quad 1\leq k\leq n.\]
\end{definition}
% If we define the degree three differential operator on strictly convex functions
% \[S_k:=\Gamma_{k\ast }^{\bullet}-\Gamma_{k\bullet }^{\ast}
% \quad (\mathrm{resp.}\,\, I_k:={[Hu^{-1}]_\bullet}_{\ast k}-{[Hu^{-1}]_\ast}_{\bullet k}), \] 
% 
% % 
% \item is inversible if the Riemannian metric $Hu^{-1}$ on $\Omega$ is of Hessian type. 
% Assuming that $\Omega$ has trivial first homology group, being inversible is a third order P.D.E \cite{Du,Ru}.
% \begin{equation}\label{eq:inversible1}
% {({[Hu^{-1}]}_i)}_{jk}={({[Hu^{-1}]}_j)}_{ik},\quad,\forall  i,j,k. 
% \end{equation}
% If we define the third order differential operator on strictly convex functions
% \[S_k:=\Gamma_{k\star }^{\bullet}-\Gamma_{k\bullet }^{\star}\]
% then $H\phi$ has symmetric Christoffel symbols if and only if 
% $S_k\phi =0$, $1\leq k\leq n$.
% Likewise, if we define 
% \[I_k:={[H\phi^{-1}]_{,\bullet}}_{\star k}-{[H\phi^{-1}]_{,\star}}_{\bullet k}\]
% then $H\phi $ is inversible if and only if
% $I_k\phi=0$, $1\leq k\leq n$.
% We also say that it is a solution to $(\mathcal{S})$ (resp. $(\cI)$ or that 
% \[\mathcal{S}u=0\quad (\mathrm{resp.}\,\,\mathcal{I}u=0).\]
% \end{enumerate}

Property $\mathcal{I}$ corresponds to an open subset of the solutions of a system of polynomial equations of degree $2n-1$ in the space
of jets of order three. The symmetry of the Christoffel symbols is determined by a system of polynomial equations of degree $n$; for $n=2$
it is exactly (\ref{eq:jet-variety2}).
The generalization of Lemma \ref{lem:jet-variety} to arbitrary dimensions is that property $\cI$ translates
into the symmetry of Christoffel symbols:
% \begin{lemma}\label{lem:symmetric-equals-inversible} A Hessian metric has property $\mathcal{I}$ if and only if it has  symmetric Christoffel symbols.
%  \end{lemma}
\begin{proof}[Proof of Lemma \ref{lem:propertyI-equals-symmetric}]
The Hessian metric is invertible if and only if the $i$-th and $j$-th lines of 
$[H\phi^{-1}]_{,j}$ and $[H\phi^{-1}]_{,i}$ are equal. This is equivalent to the same condition for the matrices
$[H\phi^{-1}]_{,j}H\phi$ and $[H\phi^{-1}]_{,i}H\phi$. 
If we prolong the identity 
$H\phi^{-1}H\phi=I$,
then the condition transforms on the same condition for the Christoffel matrices $\Gamma_j$ and $\Gamma_i$.
This amounts to symmetry of all Christoffel matrices.
\end{proof}

The problem of the symmetry of Christoffel symbols of Hessian metrics is amenable to Lie theoretic methods. A first instance of that is the following:

\begin{proposition}\label{pro:sym-refined} The following statements for a Hessian metric $H\phi $ on $\Omega$  are equivalent:
\begin{enumerate}
 \item It has symmetric Christoffel symbols.
 \item The two matrices $H\phi$ and $[H\phi]_{,k}$ are in the same Cartan subalgebra of the symmetric matrices,  $1\leq k\leq n$. (The Cartan subalgebra may 
 vary with $k$).
 \item The two matrices $H\phi$ and $\Gamma_k$ are in the same Cartan subalgebra of the symmetric matrices, $1\leq k\leq n$. (The Cartan subalgebra may 
 vary with $k$).
\end{enumerate}
 \end{proposition}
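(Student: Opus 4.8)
The plan is to reduce all three conditions to a single algebraic statement, namely that the symmetric positive definite matrix $H\phi$ commutes with $[H\phi]_{,k}$ for each $k$. The only structural input required is the standard description of the relevant Cartan subalgebras. Viewing the space $\Sym(n)$ of symmetric matrices as the $(-1)$-eigenspace $\pp$ of the Cartan involution $X\mapsto -X^{T}$ in $\gg=\mathfrak{gl}(n,\R)$ (so that $\cP$ is the symmetric space $\Gl/\mathrm{SO}(n)$), a Cartan subalgebra is a maximal abelian subspace $\aa\subset\pp$, and each such $\aa$ has the form $B\cdot\{\text{diagonal}\}\cdot B^{T}$ for some $B\in\mathrm{SO}(n)$. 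The elementary fact I will invoke is that two symmetric matrices lie in a common Cartan subalgebra if and only if they commute: commuting symmetric matrices are simultaneously orthogonally diagonalizable, and conversely an abelian $\aa$ consists of mutually commuting matrices.

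First I would establish $(1)\Leftrightarrow(2)$ by a one-line transposition. Since $H\phi$ and $[H\phi]_{,k}$ are symmetric, transposing $\Gamma_k=H\phi^{-1}[H\phi]_{,k}$ gives $\Gamma_k^{T}=[H\phi]_{,k}H\phi^{-1}$. Hence $\Gamma_k$ is symmetric precisely when $H\phi^{-1}[H\phi]_{,k}=[H\phi]_{,k}H\phi^{-1}$, that is, when $H\phi^{-1}$ commutes with $[H\phi]_{,k}$; and $H\phi^{-1}$ commutes with a matrix exactly when $H\phi$ does. By Definition \ref{def:fully-symmetric} the symmetry of all $\Gamma_k$ is condition $(1)$, while the commuting relation is exactly the assertion that $H\phi$ and $[H\phi]_{,k}$ lie in a common Cartan subalgebra, i.e.\ condition $(2)$.

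It then remains to prove $(1)\Leftrightarrow(3)$, which again follows once commutativity is tracked carefully. For condition $(3)$ to be meaningful the matrix $\Gamma_k$ must be symmetric, so $(3)$ already contains $(1)$. Conversely, assume $(1)$, so $\Gamma_k$ is symmetric and, by the previous step, $H\phi$ commutes with $[H\phi]_{,k}$. Then $H\phi\,\Gamma_k=[H\phi]_{,k}$ and $\Gamma_k\,H\phi=H\phi^{-1}[H\phi]_{,k}H\phi=H\phi^{-1}H\phi\,[H\phi]_{,k}=[H\phi]_{,k}$, whence $H\phi$ and $\Gamma_k$ commute; being symmetric they lie in a common Cartan subalgebra, which is condition $(3)$.

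I do not expect a genuine obstacle: the content is the dictionary \emph{common Cartan subalgebra $\Leftrightarrow$ commuting symmetric matrices}, and the three conditions differ only by elementary manipulations with $\Gamma_k=H\phi^{-1}[H\phi]_{,k}$. The one point requiring care is that condition $(3)$ presupposes the symmetry of $\Gamma_k$, so that the apparently additional commuting requirement in $(3)$ is in fact automatic once $(1)$ holds; this is the only place where the logical structure, rather than pure computation, must be handled cleanly.
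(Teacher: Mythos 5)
Your proposal is correct and follows essentially the same route as the paper: both reduce all three conditions to the commutation $[H\phi^{-1},[H\phi]_{,k}]=0$ via the dictionary ``common Cartan subalgebra of $\Sym(n)$ $\Leftrightarrow$ commuting symmetric matrices,'' the only (cosmetic) difference being that for $(1)\Rightarrow(3)$ the paper exhibits $\Gamma_k=B^{T}\Lambda_1\Lambda_2B$ by simultaneous diagonalization while you verify $H\phi\,\Gamma_k=\Gamma_k\,H\phi$ directly. Your explicit remark that condition $(3)$ already presupposes the symmetry of $\Gamma_k$ is a welcome clarification of the paper's terser ``Condition $(3)$ by definition implies the symmetry of the Christoffel matrices.''
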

\begin{proof}
Let $\ss$ be the vector subspace of symmetric matrices and let $\mathfrak{d}\subset \ss$ denote the diagonal matrices; this is a maximal Cartan
subalgebra of the symmetric matrices. 

The Christoffel matrix $\Gamma_k$ is the product of the symmetric matrices 
 $H\phi^{-1}$ and $[H\phi]_{,k}$. Therefore $H\phi$ has symmetric Christoffel matrices if and only if the following commutators are trivial:
 \[[H\phi^{-1},[H\phi]_{,k}]=0,\quad 1\leq k\leq n.\]
 This is equivalent to require that $[H\phi]_{,k}$ be in the same maximal torus of $\ss$ as $H\phi^{-1}$. 
 If $B$ is a orthogonal matrix which diagonalizes $H\phi$ then it also diagonallizes $H\phi^{-1}$:
 \[B^{T}H\phi B=\Lambda,\quad B^{T}H\phi ^{-1}B=\Lambda^{-1}.\]
   Therefore if $H\phi, [H\phi]_{,k}$ are in the Cartan subalgebra $\mathrm{Ad}_B(\mathfrak{d})\subset \ss$, then 
   so is $H\phi^{-1}$. This shows
 the equivalence between $(1)$ and $(2)$.
 
 If $(2)$ holds then 
 \[\Gamma_k=H\phi^{-1}[H\phi]_{,k}=B^{T}\Lambda_1BB^{T}\Lambda_2B=B^{T}\Lambda_1\Lambda_2B\]
 remains in the same Cartan subalgebra of the commuting factors, which 
 proves $(3)$. Condition $(3)$ by definition implies the symmetry of the Christoffel matrices. 
\end{proof}

By Lemma \ref{lem:propertyI-equals-symmetric}  Hessian metrics with symmetric Christoffel symbols are the same as Hessian metrics with 
property $\mathcal{I}$. Thus by Lemma \ref{lem:orthogonal-characteristics} strictly convex functions with orthogonal characteristics define  Hessian metrics with symmetric
Christoffel symbols. We can reprove
this result with a Lie theoretic approach: 

\begin{lemma}\label{lem:orthogonal-to-symmetric} 
 Let $\phi\in C^{\infty}(\Omega)$.
If $\mathcal{L}^{2}_C\phi\in \cD(\Omega)$ for some $C\in \mathcal{C}$, then the Christoffel matrices of $H\phi$ 
for all points in $\Omega$ are in the Cartan subalgebra 
$\mathrm{Ad}_C(\mathfrak{d})$.
In particular $H\phi$ has symmetric Christoffel symbols. 

\end{lemma}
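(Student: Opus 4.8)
The plan is to diagonalise all the relevant matrices simultaneously in the fixed orthogonal frame supplied by the Cartan factorization of $C$, and then to observe that the Christoffel matrices become conjugates of diagonal matrices by this frame.

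First I would factor $C=B\Lambda$ with $B\in\mathrm{SO}(n)$ and $\Lambda\in\cD$, exactly as in the proof of Lemma \ref{lem:orthogonal-characteristics}. Writing $\mathcal{L}^{2}_C\phi=C^{T}H\phi\, C=\Lambda B^{T}H\phi B\Lambda$ and using that $\Lambda$ is an invertible diagonal matrix, the hypothesis $\mathcal{L}^{2}_C\phi\in\cD(\Omega)$ is equivalent to $D:=B^{T}H\phi B\in\cD(\Omega)$. Thus $H\phi=BDB^{T}$ with $D$ a diagonal-valued function on $\Omega$ and $B$ a constant matrix.

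Next I would compute the Christoffel matrices directly from this presentation. Since $B$ is constant, differentiation passes through it: $H\phi^{-1}=BD^{-1}B^{T}$ and $[H\phi]_{,k}=B[D]_{,k}B^{T}$, so that $\Gamma_k=H\phi^{-1}[H\phi]_{,k}=BD^{-1}[D]_{,k}B^{T}$. The factor $D^{-1}[D]_{,k}$ is a product of two diagonal matrices, hence diagonal, i.e.\ it lies in $\mathfrak{d}$; therefore $\Gamma_k=\mathrm{Ad}_B(D^{-1}[D]_{,k})\in\mathrm{Ad}_B(\mathfrak{d})$ at every point of $\Omega$. Finally, because $\Lambda$ is diagonal, $\mathrm{Ad}_\Lambda$ fixes $\mathfrak{d}$ pointwise (both families being diagonal and hence commuting), so $\mathrm{Ad}_C(\mathfrak{d})=\mathrm{Ad}_{B\Lambda}(\mathfrak{d})=\mathrm{Ad}_B(\mathfrak{d})$; this yields $\Gamma_k\in\mathrm{Ad}_C(\mathfrak{d})$, the main assertion.

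The concluding ``in particular'' is then immediate: each $\Gamma_k$ is the conjugate of a symmetric (diagonal) matrix by the orthogonal matrix $B$, hence symmetric, so $H\phi$ has symmetric Christoffel symbols by Definition \ref{def:fully-symmetric}. Equivalently, since $H\phi=BDB^{T}$ also lies in $\mathrm{Ad}_B(\mathfrak{d})$, both $H\phi$ and each $\Gamma_k$ sit in the common Cartan subalgebra $\mathrm{Ad}_C(\mathfrak{d})$, and one invokes condition (3) of Proposition \ref{pro:sym-refined}. There is no serious obstacle here; the only point requiring care is that $C$ itself is not orthogonal, so the scaling factor $\Lambda$ must be tracked throughout and shown to drop out precisely because it commutes with the diagonal algebra $\mathfrak{d}$.
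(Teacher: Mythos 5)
Your proof is correct and follows essentially the same route as the paper: both use the Cartan factorization $C=B\Lambda$ to reduce to the constant orthogonal frame $B$, observe that $H\phi$ and its derivatives (hence the $\Gamma_k$) lie in $\mathrm{Ad}_B(\mathfrak{d})=\mathrm{Ad}_C(\mathfrak{d})$, and conclude symmetry. The only cosmetic difference is that you compute $\Gamma_k=BD^{-1}[D]_{,k}B^{T}$ directly, whereas the paper places $H\phi$ and $[H\phi]_{,k}$ in the common Cartan subalgebra and then cites item (2) of Proposition \ref{pro:sym-refined}.
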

\begin{proof}
By hypotheses for each $x\in \Omega$ 
\[C^{T}H\phi C=\Lambda,\quad \Lambda=\Lambda(x)\in \mathcal{D}(\Omega), \quad C\in \mathcal{C}.\]
Hence $H\phi={(C^{T})}^{-1}\Lambda C^{^{-1}}$ and upon taking its first order prolongation
\[[H\phi]_{,k}={(C^{T})}^{-1}\Lambda_{,k} C^{-1}.\]
Therefore both $H\phi$ and $[H\phi]_{,k}$ are in  $\mathrm{Ad}_C(\mathfrak{d})$.
By item (2) in Proposition \ref{pro:sym-refined} the same occurs for $\Gamma_k$ 
(the action by conjugation on $\mathfrak{d}$ of second factor of $\mathcal{C}=\mathrm{SO}(n)\mathcal{D}$  is trivial). 
By Proposition \ref{pro:sym-refined} this implies symmetry of Christoffel matrices. 
\end{proof}

% \begin{remark}
% The differential operator $S$ does not have explicit dependence on $V$. Equivalently, the prolongations of 
% $\mathcal{L}^{2}_V\in \mathcal{D}$,
%  for all $V$, satisfy an algebraic relation which does not depend on $V$.
% \end{remark}

\begin{proposition}\label{pro:legendre}  The Legendre transform preserves the class of Hessian metrics with property $\mathcal{I}$ on convex domains.
 \end{proposition}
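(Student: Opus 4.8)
The plan is to reduce Proposition \ref{pro:legendre} to the invariance property already displayed at the end of Remark \ref{rem:legendre}, which characterizes how property $\mathcal{I}$ behaves under Legendre duality. Recall that for a strictly convex $\phi$ on a convex domain $\Omega\subset\R^{n}$, the Legendre transform $\phi^{*}$ is strictly convex on the dual convex domain $\Omega^{*}$, and the fundamental relation between the two is that the Hessians are inverse to each other under the Legendre diffeomorphism: at a point $y\in\Omega^{*}$ corresponding to $x\in\Omega$ one has $H\phi^{*}(y)=H\phi(x)^{-1}$. The key observation is that property $\mathcal{I}$ is, by definition, precisely the symmetry condition asserting that the inverse of a Hessian metric is again Hessian; this is manifestly self-dual once one knows the Hessians swap to their inverses under the Legendre map.

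First I would make this precise using Lemma \ref{lem:propertyI-equals-symmetric}, which identifies property $\mathcal{I}$ with symmetry of the Christoffel matrices $\Gamma_k=H\phi^{-1}[H\phi]_{,k}$, and the equivalent reformulation in Proposition \ref{pro:sym-refined}(2): property $\mathcal{I}$ holds if and only if, for each $k$, the matrices $H\phi$ and $[H\phi]_{,k}$ lie in a common Cartan subalgebra $\mathrm{Ad}_B(\mathfrak{d})$ of the symmetric matrices. Because passing to the Legendre transform replaces $H\phi$ by $H\phi^{-1}$, and since (as shown in the proof of Proposition \ref{pro:sym-refined}) an orthogonal $B$ diagonalizing $H\phi$ also diagonalizes $H\phi^{-1}$, the containment of $H\phi$ in $\mathrm{Ad}_B(\mathfrak{d})$ is equivalent to that of $H\phi^{*}=H\phi^{-1}$. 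The plan is therefore to show that the full first-order data $(H\phi^{*},[H\phi^{*}]_{,k})$ sits in the \emph{same} Cartan subalgebra whenever $(H\phi,[H\phi]_{,k})$ does.

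The technical heart of the argument is to track the derivatives $[H\phi^{*}]_{,k}$ through the change of variables. I would differentiate the inverse relation $H\phi^{*}=H\phi^{-1}$ together with the Legendre change of coordinates $y=\nabla\phi(x)$, whose Jacobian is exactly $H\phi$; this is the analogue, in the metric language, of the coordinate-free computation carried out symbolically at the end of Remark \ref{rem:legendre}. Differentiating, one obtains each $[H\phi^{*}]_{,k}$ (in the $y$-variables) as a linear combination, with coefficients depending on $H\phi^{-1}$, of conjugates of the matrices $[H\phi]_{,l}$ by $H\phi^{-1}$ on both sides. The essential point is that if $H\phi$ and all $[H\phi]_{,l}$ lie in a fixed Cartan subalgebra $\mathrm{Ad}_B(\mathfrak{d})$, then this subalgebra is a commutative algebra of simultaneously diagonalizable symmetric matrices closed under products and inverses; hence any such conjugation by $H\phi^{-1}$ and any linear combination of the $[H\phi]_{,l}$ stays inside $\mathrm{Ad}_B(\mathfrak{d})$. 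Consequently the $y$-derivatives $[H\phi^{*}]_{,k}$ remain in the same Cartan subalgebra as $H\phi^{*}$, and Proposition \ref{pro:sym-refined}(2) gives property $\mathcal{I}$ for $\phi^{*}$.

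The main obstacle I expect is bookkeeping the coordinate change cleanly: one must distinguish derivatives taken in the $x$-variables from those in the $y$-variables, and verify that the matrix of partial derivatives transforms so that the resulting expression is genuinely a combination of elements of the single Cartan subalgebra rather than picking up terms that mix Cartan directions. The conceptual content, however, is entirely captured by the two facts that a fixed $\mathrm{Ad}_B(\mathfrak{d})$ is a commutative unital subalgebra stable under inversion, and that $H\phi$ and $H\phi^{-1}$ are diagonalized by the same $B$; once these are in place the invariance is forced. Since property $\mathcal{I}$ and symmetry of Christoffel symbols coincide by Lemma \ref{lem:propertyI-equals-symmetric}, this establishes that the Legendre transform preserves the class of Hessian metrics with property $\mathcal{I}$ on convex domains.
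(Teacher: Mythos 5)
Your reduction to Proposition \ref{pro:sym-refined} has a gap at the decisive step. Property $\mathcal{I}$, via Lemma \ref{lem:propertyI-equals-symmetric} and Proposition \ref{pro:sym-refined}(2), only guarantees that \emph{for each $k$ separately} the pair $H\phi$, $[H\phi]_{,k}$ lies in some Cartan subalgebra of the symmetric matrices, and the paper is explicit that this subalgebra may vary with $k$. Your argument, however, invokes the hypothesis that $H\phi$ and \emph{all} the $[H\phi]_{,l}$ lie in one fixed Cartan subalgebra $\mathrm{Ad}_B(\mathfrak{d})$. That is strictly stronger: it amounts to requiring the various $[H\phi]_{,l}$ to commute pairwise, which property $\mathcal{I}$ does not give. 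For instance, at a point where $H\phi$ is a multiple of the identity, every symmetric matrix commutes with $H\phi$, so each pair $(H\phi,[H\phi]_{,l})$ sits in some Cartan subalgebra while the $[H\phi]_{,l}$ need not commute among themselves; the non-product examples of Section \ref{sec:bifurcations} live exactly in this regime. As written, the step asserting that any linear combination of the $[H\phi]_{,l}$ stays inside a single $\mathrm{Ad}_B(\mathfrak{d})$ therefore does not follow from the hypothesis of the proposition.

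The gap is repairable, and your own formula is what saves the argument. Differentiating $H\phi^{*}\circ d\phi=H\phi^{-1}$, with Jacobian $\partial x/\partial y=H\phi^{-1}$, gives $[H\phi^{*}]_{,k}=-\sum_{l}(H\phi^{-1})_{lk}\,H\phi^{-1}[H\phi]_{,l}H\phi^{-1}$, a linear combination with \emph{scalar} coefficients. To conclude you only need that $H\phi^{*}{}^{-1}=H\phi$ commutes with $[H\phi^{*}]_{,k}$, hence only that each single summand $H\phi^{-1}[H\phi]_{,l}H\phi^{-1}=H\phi^{-2}[H\phi]_{,l}$ commutes with $H\phi$; this follows from the individual relations $[H\phi,[H\phi]_{,l}]=0$, with no need for the $[H\phi]_{,l}$ to commute with one another. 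Note that the paper reaches the same endpoint without tracking second derivatives through the change of variables: it pushes forward the identities defining property $\mathcal{I}$ (differences of Lie derivatives of the entries of $H\phi^{-1}$) by the Legendre diffeomorphism, under which the coordinate vector fields go to the columns of $H\phi=(H\phi^{*})^{-1}$, and recognizes the transported equations verbatim as the symmetry of the Christoffel matrices of $H\phi^{*}$.
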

\begin{proof}
Let $\mathcal{L}_j$ denote the Lie derivative with respect to $\tfrac{\partial}{\partial x_j}$.
We can rewrite property $\mathcal{I}$ for $H\phi$ as 
\begin{equation}\label{eq:inversibility-Lie}
\mathcal{L}_{\bullet}{(H\phi^{-1})}_{\star k}-\mathcal{L}_{\star}{(H\phi^{-1})}_{\bullet k}=0.
\end{equation}
The differential of $\phi$ defines the Legendre diffeomorphism 
\[d\phi:\Omega\to \Omega^{*}, \quad D(d\phi)=H\phi.\]
If we push forward each equation in (\ref{eq:inversibility-Lie} by the Legendre diffeomorphism $d\phi$, then the Lie derivative of the 
pushed forward functions --- entries of the inverse Hessian --- by the pushed forward vector fields will subtract to zero as well.
The entries of the inverse Hessian matrix  are pushed forward to the entries of the Hessian of $\phi^{*}$;  
the coordinate vector fields are pushed forward to the columns vector fields of the Jacobian matrix $H\phi$, which at points
in $\Omega^{*}$ is the matrix ${H\phi^{*}}^{-1}$.
Therefore property $\mathcal{I}$ for $H\phi$ is equivalent to
\[{{H\phi^{*}}^{-1}}_{\bullet\circ}{[H\phi^{*}]_k}_{\circ\star}-
{{H\phi^{*}}^{-1}}_{\star\circ }{[H\phi^{*}]_k}_{\circ\bullet}=0,\]
which is the symmetry of the Christoffel matrices of $H\phi^{*}$. Therefore 
 by Lemma \ref{lem:propertyI-equals-symmetric} $H\phi^{*}$ has property $\mathcal{I}$.
% \[I_k\phi=0  \Longleftrightarrow {{H\phi^{*}}^{-1}}_{\bullet\circ}{[H\phi^{*}]_k}_{\circ\star}-
% {{H\phi^{*}}^{-1}}_{\star\circ }{[H\phi^{*}]_k}_{\circ\bullet}=0\Longleftrightarrow S_k\phi^{*}=0.\]
% Thus $H\phi^{*}$ satisfies condition $\mathcal{S})$ and 
% therefore by Lemma \ref{lem:propertyI-equals-symmetric} it is inversible.
\end{proof}

\section{The universal frame bundle for Hessian metrics and matrices with orthogonal columns}\label{sec:frame-bundle}

To generalize Proposition \ref{pro:reg-sols} to arbitrary dimensions jet spaces will be replaced by (a subset of) the principal orthogonal frame bundle of the Hessian 
metric with its Levi-Civita connection. There are three reasons to do that: 
\begin{enumerate}[(a)]
\item A function $\phi\in C^{\infty}(\Omega)$ has property $\mathcal{I}$ if and only if $H\phi$ has symmetric Christoffel symbols. The Christoffel 
symbols are the components of the Levi-Civita connection. Therefore one may expect a reformulation of property $\mathcal{I}$ related to the 
tangent or orthogonal frame bundle with the Levi-Civita connection. 
 \item  If a function $\phi\in C^{\infty}(\Omega)$ has orthogonal characteristics, then the Hessian metric $H\phi$ splits (locally, but along the 
 same characteristics everywhere). In other words, the conclusion of the de Rham 
 Splitting Theorem holds. Therefore to study the relation between property
 $\mathcal{I}$ and orthogonal characteristics it may be appropriate to look at parallel transport  
 on  the principal frame bundle with its the Levi-Civita connection.
%  that the property $\math relate the problem as one in the tangent bundle 
%  of $\Omega$ with the Levi-Civita connection, or, equivalently,
%  in the frame bundle with the Levi-Civita connection.  
 \item  Hessian metric on domains of Euclidean space are characterized among Riemannian metrics as those 
 whose frame bundle is the pullback of a universal principal bundle with connection coming from 
 symmetric space theory \cite{Du}. 
\end{enumerate}

For a function $\phi$ the information of the homogeneous part of its second jet is the same as the one contained in its Hessian.
Thus for our strictly convex functions we shall be looking at the map $x\mapsto H\phi(x)$, which takes values in the positive matrices $\cP$.
There, the pencil in (\ref{eq:hyp-pencil}) defined by hyperbolic PDEs with orthogonal characteristics generalizes as follows:
The second order PDE equation (\ref{eq:axis-characteristics}) corresponds to Hessian metrics with image
in the (positive) diagonal matrices $\mathcal{D}$. Matrices with orthogonal columns have a
factorization into an orthogonal and a diagonal matrix. Thus we may confine ourselves to the family of second order PDEs
$\cL^{2}_B\phi\in \mathcal{D}$, $B\in \mathrm{SO}(n)$.
To each of them there corresponds the subset $\mathrm{Ad}_{B}(\mathcal{D})\subset \cP$; their union over $B\in \mathrm{SO}(n)$ fills $\cP$
as any positive matrix can be diagonalised by a special orthogonal transformation. 

% The Christoffel symbols encode information on the first order jet of a Riemannian metric. This is equivalent to the 
% information encoded in the tangent bundle with the Levi-Civita connection, or in the frame bundle with the Levi-Civita connection.
For a Riemannian metric defined on a subset of Euclidean space, its orthogonal frame bundle --- forgetting for the moment about the Levi-Civita connection ---
is constructed via pullback:
% The special orthogonal group $\mathrm{SO}(n)$ is the fixed point set of the involution
% $\sigma:\mathrm{Gl}(n)^{+}\to \mathrm{Gl}(n)^{+}$, $A\to {(A^{-1})}^{T}$.
The map $\pi: \mathrm{Gl}(n)^{+}\to \mathrm{Gl}(n)^{+}$, $A\mapsto {A^{-1}}^{T}A^{-1}$,
has image the closed embedded submanifold of positive matrices. The restriction to its image 
\begin{equation}\label{eq:Hessian-Cartan}
\pi:\mathrm{Gl}(n)^{+}\to \mathcal{P}
\end{equation}
  \begin{itemize}
   \item is a (right) principal bundle for $\mathrm{SO}(n)$;
   \item intertwines the right action of $\mathrm{SO}(n)$ on $\mathrm{Gl}(n)^{+}$ and the adjoint action of $\mathrm{SO}(n)$ on $\mathcal{P}$;
   \item is the bundle of (positively oriented) orthogonal frames for metrics on $\R^n$.
  \end{itemize}

% \begin{proof} A standard computation.
% Let $V,W\in G$. We have
% \[\pi(V)=\pi(W)\Longleftrightarrow \sigma(W^{-1}V)=W^{1}V\Longleftrightarrow V=WC,\quad C\in K.\]
% An analogous infinitesimal calculation shows that the kernel of $\pi_*$ at $V$ is $V\cdot \kk$. This proves item (1).
% 
% 
% By item (1) the map $\pi:G\to G$ descends to an 
% injective immersion $\bar{\pi}:G/K\to \cP\subset G$. By the constant rank theorem its (connected) image
% is locally (on $G/K$) a submanifold.
% Because $\sigma$ is a group morphism it commutes with the involution $\iota$. By construction 
% \[\cP\subset G^{\iota\circ \sigma}.\]
% The latter is a closed submanifold whose connected component through the identity element $G^{\iota\circ \sigma}_e$ 
% has the dimension of $G/K$.
% Because $\cP$ is connected $\cP\subset G^{\iota\circ \sigma}_e$.
% 
% The Cartan decomposition implies that the exponential 
% of $\ss$ is a closed submanifold whose dimension equals the dimension of $\cP$.
% It is also contained in $\cP$: 
% \[\exp(X)=\pi(\exp(\tfrac{1}{2}X)).\]
% Thus we have a sequence of inclusions of connected equidimensional immersed  submanifolds
% \[\exp(\ss)\subset \cP\subset G^{\iota\circ \sigma}_e.\]
% Because the first and the third are closed embedded submanifolds, the three must be equal. This proves item (2).
% 
% Because $\bar{\pi}$ is a (closed) embedding, we deduce that $\pi:G\to \cP$ is a principal $K$-bundle.
% 
% Item (4) is straightforward.
% \end{proof}

Let  $\nabla$ be the  $\mathrm{SO}(n)$-invariant principal connection on $\pi:\Gl\to \cP$ which  at the identity matrix has as horizontal 
space the symmetric
matrices\footnote{Its curvature there is $[\ss,\ss]$.}.
% The following result is extracted from Proposition 4.1 in \cite{Du}:
\begin{proposition}\cite[Proposition 4.1]{Du}\label{pro:universal}
If $H\phi$ is a Hessian metric on $\Omega$, then the pullback of  $\nabla$  by 
$H\phi:\Omega\to \cP$, $x\mapsto H\phi(x)$,
is the Levi-Civita connection on the orthogonal frame bundle of $H\phi$. Furthermore, this property characterizes Hessian metrics
among Riemmanian metrics in domains of Euclidean space.

% The principal bundle with connection $\pi:(\Gl,\nabla)\to \mathcal{P}$ is the universal orthogonal frame bundle for a Riemannian metric in a domain of $\R^{n}$
% if and only if the Riemannian metric is Hessian. 
% In particular 

% \begin{itemize}
%  \item Its curvature at the identity is $[\ss,\ss]$.
%  \item If $X\in T_{\pi(V)}\cP$, then its horizontal lift is 
% \[V(-\tfrac{1}{2}V^TXV).
% \]
% % \[ c_V(t)=V\mathrm{e}^{-\frac{1}{2}V^TXV}.\]
% \end{itemize}
\end{proposition}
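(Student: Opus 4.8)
The plan is to prove both assertions at once by reducing everything to a single torsion computation. First I would observe that the pullback bundle is literally the orthogonal frame bundle: a matrix $A\in\Gl$ is a positively oriented $H\phi(x)$-orthonormal frame at $x$ precisely when $A^{T}H\phi(x)A=\mathrm{I}$, i.e.\ when $\pi(A)=(A^{-1})^{T}A^{-1}=H\phi(x)$. Hence $\{(x,A):\pi(A)=H\phi(x)\}$ is exactly the pullback of $\pi:\Gl\to\cP$ along $x\mapsto H\phi(x)$, and a local section of it is an orthonormal frame field $A:\Omega\to\Gl$. Since the structure group is $\mathrm{SO}(n)$, which preserves the standard inner product, the pullback of $\nabla$ is automatically a metric connection on $T\Omega$; by uniqueness of the Levi--Civita connection it therefore suffices to show that the pullback connection is torsion free, and I will show that this happens if and only if $H\phi$ is Hessian, which settles the characterization simultaneously.

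Next I would run the computation in a local orthonormal frame field $A$. The induced coframe is the column of $1$-forms $\theta=A^{-1}\,dx$, and the connection form of $\nabla$ is the $\so(n)$-component of the Maurer--Cartan form, $\bar\omega=(A^{-1}\,dA)_{\so(n)}$ (this is the canonical connection of the symmetric space $\Gl/\mathrm{SO}(n)$, whose curvature $[\ss,\ss]$ matches the footnote defining $\nabla$). Writing $\bar\sigma=(A^{-1}\,dA)_{\ss}$ for the symmetric part, so that $A^{-1}\,dA=\bar\omega+\bar\sigma$, differentiating $\theta$ via $d(A^{-1})=-A^{-1}(dA)A^{-1}$ gives $d\theta=-(A^{-1}\,dA)\wedge\theta=-(\bar\omega+\bar\sigma)\wedge\theta$. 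Consequently the torsion, which by Cartan's first structure equation is $T=d\theta+\bar\omega\wedge\theta$, collapses to $T=-\bar\sigma\wedge\theta$.

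The final step is to read off what the vanishing of $\bar\sigma\wedge\theta$ means. Differentiating $\pi(A)=(A^{-1})^{T}A^{-1}=H\phi$ yields $d(H\phi)=-2(A^{T})^{-1}\bar\sigma\,A^{-1}$, i.e.\ $\bar\sigma=-\tfrac12 A^{T}\,d(H\phi)\,A$. Substituting this together with $\theta=A^{-1}\,dx$ and cancelling the $A$'s, the $i$-th component of $\bar\sigma\wedge\theta$ becomes, up to the invertible factor $A^{T}$, a constant multiple of $\sum_{k,l,p} A^{k}_{i}\,\partial_{p}(H\phi)_{kl}\,dx^{p}\wedge dx^{l}$. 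Because $dx^{p}\wedge dx^{l}$ is antisymmetric, this vanishes if and only if $\partial_{p}(H\phi)_{kl}=\partial_{l}(H\phi)_{kp}$ for all indices. For a Hessian metric $(H\phi)_{kl}=\phi_{,kl}$ this is immediate from the symmetry of third partials, so the torsion vanishes and the pullback connection equals the Levi--Civita connection. Conversely, if the pullback is the (torsion free) Levi--Civita connection then the same identity forces $\partial_{p}(H\phi)_{kl}=\partial_{l}(H\phi)_{kp}$; since $\Omega$ has trivial first homology, this closedness condition means the metric is Hessian by \cite{Du}, which gives the characterization.

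I expect the main obstacle to be the bookkeeping in the middle step: correctly splitting the Maurer--Cartan form into its $\so(n)$ and $\ss$ parts, recognizing $\bar\omega$ as the connection form of $\nabla$, and identifying the symmetric part $\bar\sigma$ with $d(H\phi)$ so that the torsion term reduces to the classical closedness condition on the metric. Once this dictionary is in place, the metric compatibility and the final antisymmetrization are routine.
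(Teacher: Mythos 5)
Your argument is correct. Note that the paper itself supplies no proof of this proposition --- it is quoted verbatim from Duistermaat \cite[Proposition 4.1]{Du} --- so there is no internal argument to compare against; what you have written is a correct, self-contained derivation of the cited result, and it is essentially the standard one. The key identifications all check out: the pullback bundle $\{(x,A):\pi(A)=H\phi(x)\}$ is the positively oriented orthonormal frame bundle since $\pi(A)=H\phi(x)$ is equivalent to $A^{T}H\phi(x)A=\mathrm{I}$; the connection form of $\nabla$ relative to a frame field $A$ is the $\so(n)$-part of $A^{-1}dA$ (consistent with the paper's later use of $C\cdot\ss$ as the horizontal space and with the curvature footnote); the torsion collapses to $-\bar\sigma\wedge\theta$ with $\bar\sigma=-\tfrac12 A^{T}\,d(H\phi)\,A$, so its vanishing is exactly the closedness condition $\partial_{p}g_{kl}=\partial_{l}g_{kp}$, which holds automatically for Hessian metrics and, on a domain with trivial first homology, conversely forces the metric to be Hessian. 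The one point worth making explicit is that the converse direction genuinely uses the standing topological hypothesis on $\Omega$ (closed implies exact, twice); you do invoke it, so the characterization statement is complete as written.
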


The appropriate replacement of the jets of order three will not be the full bundle of orthogonal frames. It will  be 
the subset of matrices with orthogonal columns. The following result, of which  Proposition \ref{pro:Cartan} in the Introduction follows,
shows that it is  well-behaved with respect to the universal Levi-Civita
connection:

% 
% 
% Let $q$ be precomposition of $\pi$ with the inversion $\iota$:
% \[q=m\circ (\sigma\times \iota)\circ \Delta\circ \iota,\quad V\mapsto V^{T}V.\]
% Via the identification in Lemma \ref{lem:non-linear-splitting} we define the diagonal positive matrices $\mathcal{D}\subset \cP$ 
% as the image by the exponential map of the diagonal matrices:
% \[\mathcal{D}=\exp(\mathfrak{d}),\quad \mathfrak{d}\subset \ss.\]
% % We denote by $\mathcal{D}\subset \mathcal{P}$ to be the diagonal positive matrices $\mathfrak{d}$:
% 
% \begin{definition} We define $\mathcal{A}\subset G$ to be the preimage of $\mathcal{D}$ by $q$.
% This subset is by defition the set of matrices with orthogonal columns:
% \[\mathcal{A}=\{ V\in G\,|\, q(V)\in \mathcal{D}\Longleftrightarrow V^{T}V\in \mathcal{D})\}.\]
% \end{definition}
\begin{proposition}\label{pro:non-linear-inv} The subset of  matrices with orthogonal columns
$\cC\subset \Gl$ has the following properties:
% 
% is a closed embedded submanifold of $\Gl$.
% The maps $\pi,q:G\to \cP$ and the subset $\mathcal{A}$ have the following properties:
\begin{enumerate}
%  \item The maps $q:G\to \cP$ is a (left) $K$-principal bundle. It intertwines the right $K$-action and the adjoint action.
 \item It is a closed embedded submanifold of $\Gl$ on which the
 Cartan decomposition $\Gl=\mathrm{SO}(n)\cP$ induces a product structure $\cC=\mathrm{SO}(n)\mathcal{D}$.
%  \item The restriction $\pi:\cA\to \cP$ is a surjective map with clean values. The fiber at $V\in \cA$
% \item The distribution $\mathcal{H}_V=V\cdot \ss$ is a principal connection for both $\pi,q:G\to \cP$.
\item  The intersection of the horizontal distribution of $\nabla$ with the tangent bundle $T\cC$  
 is an involutive distribution on $\cC$. Its foliation $\cF$
 is the one associated to the Cartan decomposition, with leaves
  the left $\mathrm{SO}(n)$-translates of $\mathcal{D}$.

\end{enumerate}

\end{proposition}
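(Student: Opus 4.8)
The plan is to prove the two items in turn, using the Cartan factorization $\GL(n)^{+}=\mathrm{SO}(n)\cP$ as the organizing principle throughout. For item (1), I would first recall that an invertible matrix $C$ has orthogonal columns precisely when $C^{T}C$ is diagonal. Writing the (unique) polar decomposition $C=B\Lambda$ with $B\in\mathrm{SO}(n)$ (absorbing a permutation to keep $\det>0$) and $\Lambda$ the positive factor, the condition $C^{T}C\in\cD$ becomes $\Lambda^{2}\in\cD$, i.e. $\Lambda\in\mathcal{D}$. This exhibits $\cC$ as the image of the smooth product map $\mathrm{SO}(n)\times\mathcal{D}\to\GL(n)^{+}$, $(B,\Lambda)\mapsto B\Lambda$; since the polar factors of an invertible matrix are unique, this map is injective, and I would check its differential is injective (the tangent spaces to the two factors meet only in zero because $\so(n)$ and the diagonal matrices intersect trivially inside $\gg\mathrm{l}(n)$). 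That gives the closed embedded submanifold statement together with the claimed product structure $\cC=\mathrm{SO}(n)\mathcal{D}$.

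For item (2) the strategy is to compute, at a point $C=B\Lambda\in\cC$, both the horizontal space $\mathcal H_C$ of $\nabla$ and the tangent space $T_C\cC$, and to identify their intersection. Because $\nabla$ is $\mathrm{SO}(n)$-invariant and the horizontal space at the identity is the symmetric matrices $\ss$, I expect the horizontal space at a general point to be obtained by left-translation by $B$ combined with the right $\mathrm{SO}(n)$-action; the clean way to organize this is to left-translate everything back to decompose $T_C\GL(n)^{+}$ via $C^{-1}\cdot$, so that one is comparing two subspaces of $\gg\mathrm{l}(n)=\so(n)\oplus\ss$. The tangent space to $\cC$ at $C$, translated this way, should decompose as (an $\mathrm{Ad}$-twist of) $\so(n)$ coming from the $\mathrm{SO}(n)$-factor plus the diagonal directions coming from the $\mathcal{D}$-factor; the horizontal space translates to a twist of $\ss$. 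Intersecting, the $\so(n)$ part drops out and what survives is exactly the diagonal directions. I would then verify that this intersection distribution has constant rank $n$, hence is a regular distribution.

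The final and most delicate step is to identify the leaves and thereby establish involutivity. The natural candidates are the left translates $B\mathcal{D}$ for fixed $B\in\mathrm{SO}(n)$: along such a submanifold the $\mathrm{SO}(n)$-factor is frozen and only $\Lambda\in\mathcal{D}$ varies, so its tangent space should match the distribution computed above. I would confirm this by showing that the tangent space of $B\mathcal{D}$ at $B\Lambda$ coincides with the intersection $\mathcal H_{B\Lambda}\cap T_{B\Lambda}\cC$ found in the previous paragraph. Once the candidate leaves are integral manifolds of the distribution and their dimensions match its rank, the distribution is automatically involutive (Frobenius, in the easy direction where an integrating foliation is exhibited by hand), and $\cF$ is the foliation by the left $\mathrm{SO}(n)$-translates of $\mathcal{D}$. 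The main obstacle I anticipate is bookkeeping: getting the left/right translation conventions for the $\mathrm{SO}(n)$-invariant connection consistent so that the horizontal space at $B\Lambda$ is computed correctly, in particular tracking how the right $\mathrm{SO}(n)$-action (under which $\pi$ is a principal bundle) and the left translation by $B$ interact. I expect that choosing to trivialize by left translation by the full matrix $C$ and then using invariance of $\nabla$ under the right action will make the $\so(n)$ versus $\ss$ splitting transparent and reduce the verification to the elementary fact that diagonal matrices lie in $\ss$ while the $\mathrm{SO}(n)$-directions lie in (a conjugate of) $\so(n)$.
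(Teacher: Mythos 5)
Your proposal follows essentially the same route as the paper: factor $C=B\Lambda$ with $B\in\mathrm{SO}(n)$ and $\Lambda\in\mathcal{D}$, left-translate everything back to the Lie algebra, identify the intersection of the horizontal space $C\cdot\ss$ with $T_C\cC=C\cdot\bigl(\mathrm{Ad}_{C^{-1}}(\so(n))+\mathfrak{d}\bigr)$ as $C\cdot\mathfrak{d}$, and read off involutivity from the explicit integral manifolds $B\mathcal{D}$. Two steps as written need repair, though both are fixable inside your own framework. First, injectivity of $(B,\Lambda)\mapsto B\Lambda$ together with injectivity of its differential only produces an injective immersion, which does not by itself give a \emph{closed embedded} submanifold; you should instead note that this map is the restriction of the global Cartan diffeomorphism $\mathrm{SO}(n)\times\cP\to\Gl$ to the closed embedded submanifold $\mathrm{SO}(n)\times\mathcal{D}$ (the paper takes the dual route, realizing $\cC$ as the preimage of $\mathcal{D}$ under the submersion $A\mapsto A^{T}A$). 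Second, right-$\mathrm{SO}(n)$-invariance plus the value $\ss$ at the identity pins down the horizontal distribution only along the fiber $\mathrm{SO}(n)$, not at $C=B\Lambda$; you must invoke that $\nabla$ is the canonical symmetric-space connection, whose horizontal space at $C$ is the plain left translate $C\cdot\ss$ with no twist --- this resolves the bookkeeping worry you raise. Granting that, your claim that ``the $\so(n)$ part drops out'' is correct but rests on the one-line lemma that $\mathrm{Ad}_{g}(\xi)$ is never symmetric for $\xi\in\so(n)$ nonzero and $g$ invertible (its eigenvalues are nonreal), which is exactly what collapses the intersection to $\mathfrak{d}$ and matches the tangent spaces of the leaves $B\mathcal{D}$.
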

\begin{proof}
Let $\iota$ be the inversion map on $\Gl$ and $q=\pi\circ \iota:\Gl\to \cP$, $A\mapsto A^{T}A$.
A matrix $C$ has orthogonal columns if and only if $q(C)\in \mathcal{D}$. 
Therefore $\cC$ is the preimage under a submersion of the closed embedded 
submanifold of positive diagonal matrices, thus  
 a closed embedded submanifold of $\Gl$.
%  It is clear that the left action of $\mathrm{SO}(n)$ preserves $\cC$.
We have already used the (unique) factorisation of a matrix with orthogonal columns as a product of an orthogonal and a diagonal matrix. It is 
straightforward that it gives rise to a Cartesian product of manifolds  $\cC=\mathrm{SO}(n)\mathcal{D}$.

The product structure in (1) implies that its tangent space at $C\in \mathcal{C}$ is 
\[\mathfrak{so}(n)\cdot C \cdot \mathfrak{d}=C\cdot  \mathrm{Ad}_{C^{-1}}(\mathfrak{so}(n))\cdot \mathfrak{d}.\]
The horizontal space of $\nabla$ there is $C\cdot \ss$. Because the conjugation of a skew orthogonal matrix by an orthogonal one can never be symmetric, 
the intersection of the tangent spaces must be $C\cdot \mathfrak{d}$. Therefore the intersection of the horizontal space
of $\nabla$ with $T\cC$ is the distribution  \footnote{One could also deduce involutivity
by recalling that the curvature of the connection is $C\cdot [\ss,\ss]$, 
and, therefore,  the abelian subalgebra $\mathfrak{d}$ is flat.} 
tangent to the left translates of $\cD$ by $\mathrm{SO}(n)$.
% 
% 
% 
% There horizontal distribution are
% \[  \kk\cdot V\cdot \mathfrak{d}, \quad V\cdot \ss,\]
% respectively. Therefore the intersection is exactly $V\cdot \mathfrak{d}$. One sees right away that this regular distribution
% integrates into the foliation by left $K$-traslates of $\cD$. 
\end{proof}

% The leaves of $\cF$ generalize the prolongations of the hyperplanes $\mathbb{I}_{[a:b]}$.
Next, we argue  how $\pi:(\cC,\cF)\to \cP$ provides a `desingularization' of the pencil $\Ad{_B}(\cD)$, $B\in \mathrm{SO}(n)$.
\begin{proposition}\label{pro:desingularisation} The restriction $\pi|_{\cC}:\cC\to \cP$ has the following properties:
\begin{enumerate}
 \item It is a  surjective map all whose values are clean.
%  \item The restriction of $\pi|_\cA$ to a stratum   $\Theta_\cA^{j}$ is a surjective locally trivial fibration:
%  \[\pi|_{\Theta^{j}_\cA}: \Theta^{j}_\cA\to \Theta^{j}.\]  
%  Moreover $\pi|_\cA^{-1}(\Theta^{j}_\cP)=\Theta_\cA^{j}$.
%  \item The intersection of $\cF$ with $\Theta^{j}_\cA$ is clean and equals $K\Theta^{j}_\cD$.
  \item The restriction of the differential of $\pi|_{\cC}$ to $T\cF$ has trivial kernel and the restriction of $\pi|_{\cC}$
  to the leaf $B\cD$ is a diffeomorphism onto $\Ad{_B}(\cD)$.
%   of $C\cdot \cD$ is $\mathrm{Ad}_C\cD\subset \cP$.
%  \item The foliation $K\Theta^{j}_\cD$ is projectable by the submersion $\pi|_{\Theta^{j}_\cA}$. Its image  
%  is the foliation of $\Theta^{j}$ given traslation of $\Theta^{j}_\cD\subset \mathcal{D}$ by the adjoint action.
%  \item The kernel of the restriction of differential of $\pi|_\cA$ to $T\cF$ defines
%  on each stratum $\Theta^{j}_\cA$ a regular distribution of $T\cF|_{\Theta^{j}_\cA}$ which complements the distribution 
%  of the foliation $K\Theta^{j}$. 
\end{enumerate}
%  Therefore $\pi:(\cA,K\cD)\to (\cP,\Theta_\cP)$ is a resolution of the singular foliation into a regular one.
\end{proposition}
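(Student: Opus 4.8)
The plan is to reduce everything to the symmetric map $\rho\colon\cC\to\cP$, $\rho(C)=CC^{T}$, because $\pi|_{\cC}(C)=(CC^{T})^{-1}$ is the composition of $\rho$ with inversion on $\cP$, a diffeomorphism that sends each $\Ad_{B}(\cD)$ to itself; fibres, kernels of differentials, and the diffeomorphism property in (2) are all unchanged by postcomposition with a diffeomorphism. Surjectivity is then the spectral theorem: given $P\in\cP$, write $P=\Ad_{B}(D)$ with $B\in\mathrm{SO}(n)$ and $D\in\cD$, and note $C=BD^{1/2}\in\cC$ satisfies $C^{T}C=D\in\cD$ and $\rho(C)=P$. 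From Proposition \ref{pro:non-linear-inv}(1) the product structure $\cC\cong\mathrm{SO}(n)\times\cD$ gives $T_{C}\cC=\so(n)C+C\mathfrak{d}=\{XC+CY:X\in\so(n),\,Y\in\mathfrak{d}\}$, and for $Q:=CC^{T}$ and $C=B\Lambda$ I would compute
\[
d\rho_{C}(XC+CY)=(XC+CY)C^{T}+C(XC+CY)^{T}=[X,Q]+2\,\Ad_{B}(\Lambda^{2}Y).
\]

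To read off the kernel I would conjugate into the frame diagonalising $Q$. Writing $\tilde X=B^{T}XB\in\so(n)$ gives $d\rho_{C}(XC+CY)=\Ad_{B}\bigl([\tilde X,\Lambda^{2}]+2\Lambda^{2}Y\bigr)$, and since $\Ad_{B}$ is an isomorphism the kernel is cut out by $[\tilde X,\Lambda^{2}]+2\Lambda^{2}Y=0$. The commutator $[\tilde X,\Lambda^{2}]$ is off-diagonal with entries $\tilde X_{ij}(\lambda_{j}^{2}-\lambda_{i}^{2})$, whereas $2\Lambda^{2}Y$ is diagonal with entries $2\lambda_{i}^{2}Y_{ii}$; as $\lambda_{i}>0$ the diagonal part forces $Y=0$ and the off-diagonal part forces $\tilde X_{ij}=0$ whenever $\lambda_{i}\neq\lambda_{j}$. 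Thus $\tilde X$ lies in $\mathfrak{k}_{\Lambda}:=\{Z\in\so(n):[Z,\Lambda^{2}]=0\}$, the block-diagonal skew matrices supported on the eigenspaces of $\Lambda$, and since such $Z$ commute with $\Lambda$ one gets $\ker d\rho_{C}=C\,\mathfrak{k}_{\Lambda}$.

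For (1) it remains to identify the fibre and match it with this kernel. Since the fibres of $\pi$ are the right $\mathrm{SO}(n)$-orbits, the fibre of $\rho$ through $C$ is $\{CR:R\in\mathrm{SO}(n),\ R^{T}\Lambda^{2}R\in\cD\}$; this condition says exactly that the columns of $R$ form an orthonormal basis of eigenvectors of $\Lambda^{2}$, so $R$ is block-orthogonal with respect to the eigenspace decomposition up to a permutation of coordinates, and the admissible $R$ form a finite union of cosets of $K=\mathrm{S}\bigl(\prod_{a}\mathrm{O}(m_{a})\bigr)$, with $m_{a}$ the eigenvalue multiplicities. This is a closed submanifold of $\mathrm{SO}(n)$ with $T_{I}K=\mathfrak{k}_{\Lambda}$, so the fibre is a submanifold with tangent space $C\,\mathfrak{k}_{\Lambda}=\ker d\rho_{C}$; as $C$ was arbitrary this is precisely the statement that every value of $\pi|_{\cC}$ is clean. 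For (2), restricting to $T_{C}\cF=C\mathfrak{d}$ the formula above reads $d\rho_{C}(CY)=2\,\Ad_{B}(\Lambda^{2}Y)$, which has trivial kernel (again because $\lambda_{i}>0$) and image $\Ad_{B}(\mathfrak{d})=T_{\rho(C)}\Ad_{B}(\cD)$; together with the explicit bijection $\Lambda\mapsto\Ad_{B}(\Lambda^{2})$ of the leaf $B\cD$ onto $\Ad_{B}(\cD)$ this yields the claimed diffeomorphism, which transfers to $\pi|_{\cC}$ since inversion fixes $\Ad_{B}(\cD)$.

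The main obstacle is the cleanness assertion over the non-generic locus where $\Lambda$ has repeated eigenvalues: there the fibre jumps to positive dimension, and the content of the proof is exactly that its tangent space is governed by the same stabiliser algebra $\mathfrak{k}_{\Lambda}$ that appears in $\ker d\rho_{C}$. The bookkeeping identifying the admissible $R$'s with cosets of $\mathrm{S}\bigl(\prod_{a}\mathrm{O}(m_{a})\bigr)$, and the compatibility of $\Lambda$ with $\mathfrak{k}_{\Lambda}$ needed to turn $C\,\mathfrak{k}_{\Lambda}$ into a genuine tangent space, are where care is needed.
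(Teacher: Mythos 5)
Your proof is correct and follows essentially the same route as the paper: surjectivity from the spectral theorem, and cleanness plus the leafwise diffeomorphism from a pointwise linear-algebra computation at $C=B\Lambda$ using the factorization $\cC=\mathrm{SO}(n)\cD$ (you compute the differential of $C\mapsto CC^{T}$ explicitly where the paper intersects the vertical space $C\cdot\mathfrak{so}(n)$ with $T_{C}\cC=C\cdot\mathrm{ad}_{\Lambda}^{-1}(\mathfrak{d})$ via the clean intersection of the adjoint orbit with $\cD$, but these amount to the same computation). Your description of the fibre as a finite union of cosets of $\mathrm{S}\bigl(\prod_{a}\mathrm{O}(m_{a})\bigr)$ is in fact more careful than the paper's, which records only the coset $B\Lambda^{1/2}\mathrm{SO}(n)_{\Lambda}$; this does not affect cleanness, which is a local statement.
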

\begin{proof}
Let $V\in \cP$. Then it diagonalizes in an orthogonal basis: 
$B^{T}V B=\Lambda$, $B\in \mathrm{SO}(n)$, $\Lambda\in \cD$.
Hence $\pi(B\Lambda^{1/2})=V$, so  $\pi|_{\cC}$ is surjective. 
The fiber is 
\[\pi|_{\cC}^{-1}(V)=B\Lambda^{1/2}\mathrm{SO}(n)_\Lambda,\]
where the latter subgroup is the stabilizer of $\Lambda$ for the adjoint action.
The kernel of the differential of $\pi$ at $B\Lambda^{1/2}$ is
$B\Lambda^{1/2}\cdot \mathfrak{so}(n)$. The tangent space of $\cC$ at $B\Lambda^{1/2}$
is $B\Lambda^{1/2}\cdot \mathrm{ad}_\Lambda^{-1}(\mathfrak{d})$. Because the adjoint orbit 
through $\Lambda$ intersects $\cD$ cleanly at $\Lambda$, their intersection --- which is the kernel of 
 of the differential of $\pi|_{\cC}$ at $B\Lambda^{1/2}$ --- is $B\Lambda^{1/2}\cdot \mathfrak{so}(n)_\Lambda$.
Therefore all values of $\pi|_{\cC}$ are clean.

The tangent space to the leaf of $\cF$ through $B\Lambda^{1/2}$ is $B\Lambda^{1/2}\cdot \mathfrak{d}$. Its intersection with 
$B\Lambda^{1/2}\cdot \mathfrak{so}(n)_\Lambda$ is trivial. Therefore the restriction of $\pi$ to $B\cD$ is a local diffeomorphism 
over its image. That image is by construction $\mathrm{Ad}_B(\cD)$. To conclude that it is a diffeomorphism 
one can either check that the map is bijective or argue that the manifolds 
involved are contractible.
\end{proof}

The base of the pencil (\ref{eq:hyp-pencil}) corresponds to inner products in the plane which have a unique eigenvalue. In arbitrary dimensions
we have an analogous subsets. For each symmetric matrix we can order its eigenvalues (with their multiplicity)  in an increasing sequence. To each partition 
$\kappa$ of $\{1,\dots,n\}$ there correspond a subset $\Theta^{\kappa}_\ss$; likewise, to each matrix with orthogonal columns we can order
the norm of its columns in an increasing sequence. In that way we obtain partitions of $\cD$, $\cP$ and $\cC$:
$\Theta_{\cD}$, $\Theta_\cP$, $\Theta_{\cC}$.

% Any positive matrix diagonalizes in an orthogonal basis. This means that the restriction of $\pi$ to $\cC$ is surjective.
% Also, the `size' of  the intersection of the fiber of $\pi$ with  $\mathcal{C}$ encodes the qualitative properties of the spectrum of the positive matrix.
% To make this more precise
% 
% 
% Because $K$ is compact its adjoint action on $\ss$ is proper. Therefore is has an associated
% orbit type stratification $\Theta$: two elements are related iff their isotropy subgroups are conjugated in $K$.
% Because every element in $\ss$ is in a Cartan subalgebra $\mathfrak{d}$ intersects every orbit. The root decomposition of 
% $(\gg,\mathfrak{d})$ implies that:
% \begin{itemize}
%  \item every adjoint adjoint orbit intersects $\mathfrak{d}$ is a Weyl group orbit;
%  \item a strata of $\Theta$ intersects $\mathfrak{d}$ cleanly in a collection of equidimensional strata of the 
%  stratification of $\mathfrak{d}$ associated to the hyperplane root distribution; the intersection is 
%  saturated by Weyl group orbits.
% \end{itemize}
% Upon using the exponential we have an stratification $\Theta$ on $\cP$ which induces an stratification 
% $\Theta_\cD$ on $\cD$.
% The pullback by the surjective submersion $q$ of the stratified space $(\mathcal{D},\Theta_\cD)$ produces a
% stratified space $(\cA,\Theta_\cA)$. Because on $\cA=K\cD$ the map $q$ is the square of the second factor
% \[\Theta^{j}_\cA:=q^{-1}(\Theta^{j}_\cD)=K\Theta^{j}_\cD.\]
% Therefore each stratum $\Theta^{j}_\cA$ carries a left invariant foliation induced from the above product structure.

\begin{proposition}\label{pro:desingularisation2}
 The partitions $\Theta_{\cD},\Theta_\cP$ and $\Theta_{\cC}$ are stratifications of $\cD$, $\cP$ and $\cC$, respectively,
and they interact with the map $\pi|_{\cC}:(\cC,\cF)\to \cP$ as follows:
\begin{enumerate}
 \item The preimage of $\Theta_\cP^{\kappa}$ is $\Theta_{\cC}^{\kappa}$ and the restriction  is a principal bundle:
 \begin{equation}\label{eq:stratum-submersion} 
\pi|_{\Theta^{\kappa}_{\cC}}: \Theta^{\kappa}_{\cC}\to \Theta^{\kappa}_\cP.
 \end{equation}
 \item The foliation $\cF=\mathrm{SO}(n)\cD$ of $\cC$ intersects the stratum $\Theta^{\kappa}_{\cC}$ cleanly and induces there the foliation  
 $\mathrm{SO}(n)\Theta^{\kappa}_\cD$ of $\Theta^{\kappa}_{\cC}$.
%   \item The restriction of the differential of $\pi|_\cA$ to $T\cF$ has trivial kernel and the image
%   of $C\cdot \cD$ is $\mathrm{Ad}_C\cD\subset \cP$.
 \item The foliation $\mathrm{SO}(n)\Theta^{\kappa}_\cD$ of $\Theta^{\kappa}_{\cC}$ is projectable by the submersion $\pi|_{\Theta^{\kappa}_{\cC}}$. Its image  
 is the foliation $\mathrm{Ad}_{\mathrm{SO}(n)}(\Theta_\cD^{\kappa})$ of $\Theta^{\kappa}_\cP$.
%  obtained by traslating the submanifold $\Theta^{\kappa}_\cD\subset \mathcal{D}$ by the adjoint action
%  of $\mathrm{SO}(n)$.
%  \item The kernel of the restriction of differential of $\pi|_\cA$ to $T\cF$ defines
%  on each stratum $\Theta^{j}_\cA$ a regular distribution of $T\cF|_{\Theta^{j}_\cA}$ which complements the distribution 
%  of the foliation $K\Theta^{j}$. 
\end{enumerate}
 It is in this sense that $\pi:(\cC,\cF,\Theta_{\cC})\to (\cP,\mathrm{Ad}_{\mathrm{SO}(n)}(\cD),\Theta_{\cP})$ is a desingularization of the
 stratified pencil.
\end{proposition}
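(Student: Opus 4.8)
The plan is to carry out everything in the explicit coordinates furnished by the Cartan factorization of Proposition \ref{pro:non-linear-inv}, writing each $C\in\cC$ uniquely as $C=B\Lambda$ with $B\in\mathrm{SO}(n)$, $\Lambda\in\cD$. Two facts drive the whole argument. First, the $i$-th column of $C=B\Lambda$ is $\lambda_i\,Be_i$, so the multiset of column norms of $C$ is exactly the diagonal of $\Lambda$; hence under the diffeomorphism $\cC\cong\mathrm{SO}(n)\times\cD$ the partition $\Theta_\cC$ becomes the product partition $\mathrm{SO}(n)\times\Theta_\cD$, i.e.\ $\Theta^\kappa_\cC=\mathrm{SO}(n)\times\Theta^\kappa_\cD$. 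Second, $\pi(B\Lambda)=(B\Lambda^{2}B^{T})^{-1}=\Ad_B(\Lambda^{-2})$, so the eigenvalues of $\pi(C)$ are the $\lambda_i^{-2}$; since $t\mapsto t^{-2}$ is an order-reversing bijection of the positive reals, it preserves coincidence patterns. For the stratification claim itself I would appeal to standard results: $\Theta_\ss$ (and its restriction $\Theta_\cP$ to the open set $\cP$) is the stratification of symmetric matrices by eigenvalue--multiplicity type, which refines the isotropy-type Whitney stratification of the conjugation action of the compact group $\mathrm{O}(n)$; $\Theta_\cD$ is the stratification of $\cD\cong\R^{n}_{>0}$ cut out by the equalities $\lambda_i=\lambda_j$ and refined by the ordering of the entries, i.e.\ the trace on the positive orthant of the reflection arrangement of type $A_{n-1}$; and $\Theta_\cC=\mathrm{SO}(n)\times\Theta_\cD$ is then a stratification as a product of the trivially stratified manifold $\mathrm{SO}(n)$ with the stratified space $\cD$.

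For item (1), the equality $\pi|_\cC^{-1}(\Theta^\kappa_\cP)=\Theta^\kappa_\cC$ is immediate from the two displayed facts: $C\in\Theta^\kappa_\cC$ exactly when $\Lambda\in\Theta^\kappa_\cD$, which by the computation of $\pi(C)$ holds exactly when $\pi(C)\in\Theta^\kappa_\cP$ (matching strata by multiplicity type, the order-reversal caused by $t\mapsto t^{-2}$ being absorbed into the labelling of $\kappa$). To see that $\pi|_{\Theta^\kappa_\cC}$ is a principal bundle I would restrict the clean-submersion picture of Proposition \ref{pro:desingularisation} to the stratum. There the fibre over $V$ was identified with $B\Lambda^{1/2}\mathrm{SO}(n)_\Lambda$ and the kernel of $d(\pi|_\cC)$ with $B\Lambda^{1/2}\cdot\mathfrak{so}(n)_\Lambda$; along a single stratum the isotropy group $\mathrm{SO}(n)_\Lambda\cong S\big(\mathrm{O}(m_1)\times\cdots\times\mathrm{O}(m_r)\big)=:K_\kappa$ has a fixed conjugacy type determined by $\kappa$, so the fibres have constant dimension and the restriction is a submersion with fibres homogeneous under the right $\mathrm{SO}(n)_\Lambda$-action. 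Local triviality comes from the homogeneous-space structure of $B\mapsto\Ad_B$ over the oriented partial flag manifold $\mathrm{SO}(n)/K_\kappa$, which exhibits $\pi|_{\Theta^\kappa_\cC}$ as a principal $K_\kappa$-bundle.

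For items (2) and (3) I would again read everything off the product structure. A leaf $B\cD$ of $\cF$ is $\{B\}\times\cD$ and the stratum is $\mathrm{SO}(n)\times\Theta^\kappa_\cD$; the tangent spaces $0\oplus\mathfrak{d}$ and $\mathfrak{so}(n)\oplus T\Theta^\kappa_\cD$ meet in $0\oplus T\Theta^\kappa_\cD$, which is exactly the tangent space of the set-theoretic intersection $\{B\}\times\Theta^\kappa_\cD$; this is the clean intersection of (2), and the traces of $\cF$ on $\Theta^\kappa_\cC$ assemble into the foliation $\mathrm{SO}(n)\Theta^\kappa_\cD$. For the projectability in (3) I would combine the leafwise diffeomorphism of Proposition \ref{pro:desingularisation}(2) --- $\pi$ carries $\{B\}\times\Theta^\kappa_\cD$ diffeomorphically onto $\Ad_B(\Theta^\kappa_\cD)$, a leaf of $\Ad_{\mathrm{SO}(n)}(\Theta^\kappa_\cD)$ --- with the observation that $\Ad_B(\Theta^\kappa_\cD)$ consists precisely of the symmetric matrices whose $\kappa$-eigenspace decomposition is spanned by the column-blocks of $B$, and therefore depends only on the common eigendecomposition of $\pi(C)=\Ad_B(\Lambda^{-2})$. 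Hence any two leaves of $\mathrm{SO}(n)\Theta^\kappa_\cD$ lying over the same fibre of $\pi|_{\Theta^\kappa_\cC}$ have the same image, and the foliation is $\pi$-projectable onto $\Ad_{\mathrm{SO}(n)}(\Theta^\kappa_\cD)$, which is the asserted desingularization.

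The step I expect to require the most care is the principal-bundle assertion of (1): one must identify the correct structure group $K_\kappa$ and handle the reordering of the columns --- equivalently the choice of chamber of $\Theta^\kappa_\cD$ --- which splits each fibre of $\pi|_{\Theta^\kappa_\cC}$ into a finite union of $K_\kappa$-cosets, so that establishing local triviality amounts to trivializing $\mathrm{SO}(n)\to\mathrm{SO}(n)/K_\kappa$ over the partial flag manifold while keeping track of these components. The same reordering is what one must neutralize for (3), but there it is harmless because, as noted, the image leaf is a function of the eigendecomposition alone; once the clean intersection and the constancy of the isotropy type along each stratum are in place, the remaining verifications are the block-commutation computations indicated above.
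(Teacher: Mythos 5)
Your proposal is correct and follows essentially the same route as the paper: the Cartan factorization $\cC=\mathrm{SO}(n)\cD$, the orbit-type (Whitney) stratification of the conjugation action to establish that the partitions are stratifications, the computation $\pi(B\Lambda)=\mathrm{Ad}_B(\Lambda^{-2})$ for item (1), the clean-intersection tangent-space count for item (2), and the leafwise diffeomorphism from Proposition \ref{pro:desingularisation} for item (3). If anything you are slightly more scrupulous than the paper about the order reversal under $t\mapsto t^{-2}$ and about the finitely many components of each fiber of $\pi|_{\Theta^{\kappa}_{\cC}}$ coming from reordering the columns, points the paper's proof passes over.
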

\begin{proof}
The group $\mathrm{SO}(n)$ acts on $\ss$ by conjugation. As for any proper action it produces a stratification of $\ss$ in orbit types \cite[Chapter 2]{DK}: two symmetric matrices are related if 
their isotropy subgroups are conjugated. It is well known that upon passing to connected components the outcome is a (Whitney B) stratification 
of $\ss$. The stratification $\Theta_\ss$ is the result of possibly collecting some of the strata of the orbit type stratification belonging to 
the same subset of the orbit type partition; in any case, it is 
still a stratification for the partial order associated to the partitions $\kappa$ of $\{1,\dots,n\}$. The stratification 
$\Theta_\ss$ --- made of adjoint orbits --- intersects the Cartan subalgebra $\mathfrak{d}$ cleanly, thus inducing a stratification $\Theta_\mathfrak{d}$.
Each strata there is a face of the positive Weyl chamber of diagonal matrices with diagonal elements ordered increasingly; an open 
convex polytope in a vector subspace $\mathfrak{d}^{\kappa}$.
The partition $\Theta_\cP$ is obtained by intersecting $\Theta_\ss$ with the open subset of positive matrices, thus it is a stratification. 
The partition $\Theta_\cD$ is also obtained upon intersection; it is a stratification because for instance $\cD$ is an open subset of $\mathfrak{d}$.
Finally, $\Theta_{\cC}$ is the  pull back of $\Theta_{\cD}$ by the submersion $q$, and therefore it is a stratification as well.

Let $C\in \cC$ with factorisation $C=B\Lambda$. Then $q(C)=B\Lambda^{-2}B^{T}$ and therefore
\[C\in \Theta^{\kappa}_{\cC}\Longleftrightarrow \Lambda \in \Theta^{\kappa}_{\cD}
\Longleftrightarrow \Lambda^{-2} \in \Theta^{\kappa}_{\cD}\Longleftrightarrow q(C) \in \Theta^{\kappa}_{\cD}.
\]
% A matrix $C$ is in $\Theta^{\kappa}_{\cC}$ if and only if  $q(C)\Theta^{\kappa}_{\cD}$. 
The stratum $\Theta_k^{\cD}$ is an open subset of the vector subspace $\mathfrak{d}^{\kappa}$ of all matrices whose stabilizer contains
  $\mathrm{SO}(n)_{\kappa}$. Because the fiber of $\pi|_{\cC}$ through $C$ is $C\mathrm{SO}(n)_{\kappa}$ and 
$\pi|_{\Theta^{\kappa}_{\cC}}$ is saturated by fibers of $\pi|_{\cC}$, 
 it is a principal $\mathrm{SO}(n)_{\kappa}$-bundle. This proves (1).

% fiber of $\pi|_{\cC}$ through $C$ is $C\mathrm{SO}(n)_{q(C)}$.  Two matrices 
%  $C',C$ are in $\Theta^{\kappa}_{\cC}$ if and only if  $q(C),q(C')\in \Theta^{\kappa}_{\cD}$. 
%  The stratum $\Theta_k^{\cD}$ is an open subset of the vector subspace $\mathfrak{d}^{\kappa}$, all whose matrices have commom
%  stabilizer.  Because $\Theta^{\kappa}_{\cC}$ is saturated by preimages 
%  it is a principal $\mathrm{SO}(n)_{\kappa}$-bundle. 
%  fibers of $\pi|_{\Theta^{\kappa}_{\cC}}$ are equidimensional and has the same number of connected components.
%  Because by Proposition \ref{pro:desingularisation} $\pi|_{\cC}$ has clean values  and because
%  it follows that (\ref{eq:stratum-submersion}) is a locally trivial fibration. 
%  The stratum $\Theta_k^{\cD}$ is an open subset of the vector subspace $\mathfrak{d}^{\kappa}$, all whose matrices have commom
%  stabilizer. This proves (1)

The fibers of $q$ are the orbits of the left $\mathrm{SO}(n)$-action. The restriction $q|_{\cD}:\cD\to \cD$ is the square map, which 
preserves the strata of $\Theta_{\cD}$. Therefore the factorization of $\cC$ is compatible with the stratification:
\[\Theta_{\cC}=\mathrm{SO}(n)\Theta_{\cD}.\]
Thus the intersection of the leaf of $\cF$ though $C\in \Theta_{\cC}^{\kappa}$ is $C\Theta_{\cD}^{\kappa}$.
At $C$ the respective tangent spaces are $C\cdot \mathfrak{d}$ and $C\cdot \mathfrak{d}^{\kappa}$.
Therefore the intersection is clean and  this proves (2).

By item (1) above and by item (2) in Proposition \ref{pro:desingularisation} the restriction of
$\pi|_{\Theta^{\kappa}_{\cC}}$ to the leaf $C\cdot \Theta^{\kappa}_\cD\subset \Theta^{\kappa}_{\cC}$ is a diffeomorphism 
over its image. Its image is $\mathrm{Ad}_{B}(\Theta^{\kappa}_{\cD})$ ($\pi|_{\cD}:\cD\to \cD$ is the inverse of the 
square map); it is in fact the common image of all leaves through points of 
the fiber $C\mathrm{SO}(n)_\kappa$. 
\end{proof}

We can now sharpen Proposition \ref{pro:sym-refined}.
\begin{proposition}\label{sym-refined-stratum}
Let $H\phi$ be a Hessian metric on $\Omega$  such that $H\phi(\Omega)$ is contained in the stratum $\Theta^{\kappa}_\cP$.
Then its Christoffel symbols are symmetric if and only if 
 $H\phi$ and $[H\phi]_k$ can be conjugated by an orthogonal matrix to a 
matrix in $\mathfrak{d}^{\kappa}$,  $1\leq k\leq n$.
\end{proposition}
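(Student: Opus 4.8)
The plan is to reduce the statement to the commutator characterization of symmetric Christoffel symbols supplied by Proposition \ref{pro:sym-refined} and then to exploit the hypothesis $H\phi(\Omega)\subset\Theta^{\kappa}_\cP$ to pin down the possible derivatives $[H\phi]_k$. Recall that by Proposition \ref{pro:sym-refined} the Christoffel symbols are symmetric exactly when $[H\phi,[H\phi]_k]=0$ for every $k$, equivalently when $H\phi$ and $[H\phi]_k$ lie in a common Cartan subalgebra $\mathrm{Ad}_B(\mathfrak{d})$ of $\ss$. The backward implication is then immediate: if for each $k$ an orthogonal $B$ conjugates both $H\phi$ and $[H\phi]_k$ into $\mathfrak{d}^{\kappa}\subset\mathfrak{d}$, then the two matrices share the Cartan subalgebra $\mathrm{Ad}_B(\mathfrak{d})$, so Proposition \ref{pro:sym-refined} gives symmetry. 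All the content is in the forward implication.

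For it, I would fix $x\in\Omega$, set $V=H\phi(x)\in\Theta^{\kappa}_\cP$, and choose an orthogonal $B$ diagonalizing $V$ into the ordered form $\Lambda=B^{T}VB\in\Theta^{\kappa}_\cD\subset\mathfrak{d}^{\kappa}$. The crucial observation is that, because $H\phi$ takes values in the \emph{single} stratum $\Theta^{\kappa}_\cP$ (a submanifold by Proposition \ref{pro:desingularisation2}), each derivative $[H\phi]_k(x)$ is a tangent vector to it at $V$. First I would compute $T_V\Theta^{\kappa}_\cP$ by differentiating the parametrization $(\tilde B,\tilde\Lambda)\mapsto \tilde B\tilde\Lambda \tilde B^{T}$; conjugating back by $B$ this gives
\[
B^{T}\,T_V\Theta^{\kappa}_\cP\, B=\mathfrak{d}^{\kappa}\oplus\mathfrak{m}^{\kappa},
\]
where the $\mathfrak{d}^{\kappa}$ summand comes from varying the eigenvalues inside the stratum and $\mathfrak{m}^{\kappa}=\{[X,\Lambda]:X\in\so(n)\}$ is precisely the space of symmetric matrices supported off the diagonal blocks of $\kappa$. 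Here one uses that $\Lambda$ is block-constant, so $\mathrm{ad}_\Lambda$ scales each off-block entry by a nonzero eigenvalue difference and annihilates the block-diagonal entries; this simultaneously identifies $\mathfrak{m}^{\kappa}$ with the full off-block symmetric space and shows $\mathrm{ad}_\Lambda$ is injective on it.

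Writing $B^{T}[H\phi]_k B=\Delta_k+S_k$ with $\Delta_k\in\mathfrak{d}^{\kappa}$ and $S_k\in\mathfrak{m}^{\kappa}$, I would then impose $[H\phi,[H\phi]_k]=0$ in the $B$-frame. Since $\Lambda$ and $\Delta_k$ both lie in the abelian $\mathfrak{d}^{\kappa}$ they commute, so $[\Lambda,\Delta_k+S_k]=[\Lambda,S_k]$, and injectivity of $\mathrm{ad}_\Lambda$ on $\mathfrak{m}^{\kappa}$ forces $S_k=0$. Hence $B^{T}[H\phi]_k B=\Delta_k\in\mathfrak{d}^{\kappa}$, and together with $B^{T}VB=\Lambda\in\mathfrak{d}^{\kappa}$ this produces a single orthogonal $B$ (depending on $x$ but not on $k$) conjugating both $H\phi$ and every $[H\phi]_k$ into $\mathfrak{d}^{\kappa}$. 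The hard part will be the tangent space computation: one must check carefully that a stratum-valued map can only move in the block-constant and off-block directions — the block-diagonal, non-scalar directions would alter eigenvalue multiplicities and leave $\Theta^{\kappa}_\cP$ — and then verify that $\{[X,\Lambda]:X\in\so(n)\}$ fills out all of $\mathfrak{m}^{\kappa}$. Once the decomposition of $\ss$ relative to $\kappa$ and the injectivity of $\mathrm{ad}_\Lambda$ off the blocks are in place, the commutator condition is equivalent to the vanishing of $S_k$, i.e. to the desired block-constant form.
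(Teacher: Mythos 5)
Your proof is correct and follows essentially the same route as the paper: both arguments hinge on the observation that $[H\phi]_k$ is tangent to the stratum $\Theta^{\kappa}_\cP$ and on the characterization of symmetric Christoffel symbols from Proposition \ref{pro:sym-refined}, the only difference being that you make explicit the tangent-space decomposition $\mathfrak{d}^{\kappa}\oplus[\mathfrak{so}(n),\Lambda]$ and the injectivity of $\mathrm{ad}_\Lambda$ off the blocks, where the paper simply asserts $\mathfrak{d}\cap T\Theta^{\kappa}_\cP=\mathfrak{d}^{\kappa}$ after invoking simultaneous diagonalization. Your version even yields the slightly stronger conclusion that a single orthogonal matrix (independent of $k$) works at each point.
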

\begin{proof}
Because the image of $H\phi$ is contained in  $\Theta^{\kappa}_\cP$, its partial derivatives must be in the tangent space
to the stratum: $[H\phi]_k\in T\Theta^{\kappa}_\cP$.
By item (2) in Proposition \ref{pro:sym-refined} there exists a special orthogonal matrix $B$ which conjugates $H\phi$ and $[H\phi]_k$ to a diagonal one:
$B^{T}[H\phi]_k B\in \mathfrak{d}$.
Therefore
\[B^{T}[H\phi]_kB\in \mathfrak{d}\cap T\Theta^{\kappa}_\cP=\mathfrak{d}^{\kappa}.\]
\end{proof}

% \begin{definition}\label{def:inversible} We say that  a Hessian metric $Hu$ on $\Omega$ is inversible 
% if the Riemmanian metric $[Hu]^{-1}$ is also a Hessian metric on $\Omega$.
% Assuming that $\Omega$ has trivial first homology group, being inversible is a third order P.D.E \cite{Du,Ru}.
% 
% \begin{equation}\label{eq:inversible1}
% {({[Hu^{-1}]}_i)}_{jk}={({[Hu^{-1}]}_j)}_{ik},\quad,\forall  i,j,k. 
% \end{equation}
% If we define the degree three differential operatior on strictly convex functions
% \[I_k:={[Hu^{-1}]_\bullet}_{\ast k}-{[Hu^{-1}]_\ast}_{\bullet k},\] 
% then $Hu$ is symmetric iff 
% \[I_ku=0,\,\,\forall k.\]
% We also say that it is a solution to $(\mathcal{I})$ or that 
% \[\mathcal{I}u=0.\]
% \end{definition}

\section{Differential relations on the submanifold of matrices with orthogonal columns}\label{sec:diferential-relation}

We want to transfer property $\cI$ for Hessian metrics into a differential condition for the orthogonal frame bundle at 
the submanifold of matrices with orthogonal columns. 

Let $H\phi$ be a Hessian metric on $\Omega\subset \R^{n}$. To every curve 
$\gamma:(-\epsilon,\epsilon)$ based at $x\in \Omega$ we associate a curve in $\cP$ based at $H\phi(x)$:
\[H\phi(\gamma):(-\epsilon,\epsilon)\to \cP,\quad t\mapsto H\phi(\gamma(t)).\]
Upon choosing an orthonormal frame for $H\phi(x)$,  we can construct the horizontal lift of $H\phi(\gamma)$ based at the orthonormal frame.
\begin{definition}\label{def:tangent-horizontal-lift} A Hessian metric $H\phi$ in $\Omega$ has
property $\cC$ if for any point $x\in \Omega$ and any curve 
$\gamma$ at $x\in \Omega$ there exist an orthonormal frame $C\in \cC$ for $H\phi(x)$ such that the corresponding
horizontal curve is tangent to $\cC$ at $C$.
\end{definition}

We now translate property $\cI$ to the universal orthogonal frame bundle setting:

% \begin{theorem}\label{thm:differential-relation}
% A Hessian metric in $\Omega\subset \R^{n}$ 
% has property $\cC$ if and only if it has symmetric Christoffel symbols. 
% \end{theorem}
\begin{proof}[Proof of Theorem \ref{thm:differential-relation}]
We must show that  a Hessian metric in $\Omega\subset \R^{n}$  has property $\cC$ if and only if it has symmetric Christoffel symbols.

Property $\cC$ is linear in the velocity of the curve at $x$.
Thus it is enough to prove the equivalence for $\gamma(t)=x+te_k$, $1\leq k\leq n$.
Let us denote the horizontal lift at $A\in \cC$ by $A(t)$.
That $A$ belongs to $\cC$ means that  $A^{T}A=\Lambda\in \cD$.
By Proposition \ref{pro:universal} (taken from  \cite{Du}) the pullback of $\pi:(\Gl,\nabla)\to \cP$ by $H\phi$ is the  orthonormal frame bundle 
of $H\phi$ with its Levi-Civita
connection. Thus we have:
\[0={A'}_{\bullet \star}+\Gamma_{k \circ }^{\bullet}A_{\circ \star}(=A'+\Gamma_k A).\]
The image by the differential of $q$ of the vector of $A'=A'(0)$ is ${A^{T}}'A+A^{T}A'$.
Therefore the Hessian metric satisfies property $\cC$ at $A$ if and only if
\begin{equation}\label{eq:differential-relation}
A^{T}\Gamma_{k}^{T}A+ A^{T}\Gamma_{k}A\in \mathfrak{d}.
\end{equation}
We have $\Gamma_k={H\phi^{-1}}{[H\phi]_{,k}}$, $H\phi^{{-1}}=AA^{T}$,
where the latter identity  uses that $A$ is an orthonormal frame for $H\phi$.
Hence we may rewrite $\Gamma_{k}=AA^{T}[H\phi]_{,k}$.
Thus equation (\ref{eq:differential-relation}) is equivalent to:
\[A^{T}[H\phi]_{,k}AA^{T}A+A^{T}AA^{T}[H\phi]_{,k}A=A^{T}[H\phi]_kA\Lambda +\Lambda A^{T}[H\phi]_{,k}A\in \mathfrak{d}.\]
Because $\Lambda$ has non-zero positive entries 
if its anti-commutator with a  matrix  is diagonal, then the  matrix must be diagonal.
The conclusion is that  property $\cC$ is equivalent to:
\[A^{T}[H\phi]_{,k}A\in \mathfrak{d},\quad A^{T}A\in \cD,\quad AA^{T}=H\phi^{-1}.\]
By item (2) in Proposition \ref{pro:sym-refined} this is exactly the symmetry of the Christoffel matrices.
% Condition $(\cS)$ is equivalent to:
% \[VV^{T}[Hu]_k=[Hu]_kVV^{T}, \quad VV^{T}=Hu^{-1}.\]
% 
% If $(\cA)$ holds at some $V$ over $x$  then $V^{T}[Hu]_kV=\Lambda\in \mathfrak{d}$. Therefore
% \[2\Gamma_{k \ast}^{\bullet}=VV^{T}[Hu]_k=V\Lambda V^{-1}=V\Lambda\bar{D}^{-1}V^{T},
% \quad 2\Gamma_{\bullet  k}^{\ast}=[Hu]_kVV^{T}=V\bar{D}^{-1}\Lambda V^{T},\]
%  and $(\cS)$ holds at $x$.
%  
%  If $(\cS)$ holds at $x$ and $V^{T}V=\bar{D}\in \cA$, $VV^{T}=Hu^{-1}$, then
% \[\bar{D}V^{T}[Hu]_kV=V^{T}VV^{T}[Hu]_kV=V^{T}[Hu]_kVV^{T}V=V^{T}[Hu]_kV\bar{D}.\]
% If $D$ has different eigen-values values then $V^{T}[Hu]_kV$ must be diagonal.
% 
% Therefore if $Hu$ has image in the regular stratum of $\Theta_\cP$ conditions 
% $(\cA)$ and $(\mathcal{S})$ are equivalent.
\end{proof}

% 
% \begin{remark} 
% Because the closure of each strata is contained in a submanifold of the same dimension, it
% follows that the union of the interiors $\mathrm{int}(Hu^{-1}\Theta^{j}_\cP$; over the strata is a dense
% subset of $\Omega$. However, we do not know how to argue the equivalence of the two conditions for
% Hessian metrics contained in a stratum different from the regular one. For instance, for such a metric
% which satisfies condition $(\cS)$ one would need to prove that the distribution(s) associated to the strata (the
% subbundle(s) of rank  greater than one associated to the same eigenvalue) are integrable, and, that their leaves
% are affine subspaces. That would allow a dimensional reduction argument. Such situation happens
% in dimension two. There, one can show that both conditions are equivalent in a striaghforward manner: 
% the metric is confiromal to the Euclidean one. Because it is Hessian $u_{xy}$ then must vanish and the conformal factor
% must be constant.
% \end{remark}
 
We can verify that strictly convex functions with orthogonal characteristics satisfy property $\cC$. 
\begin{lemma} 
If $\cL^{2}_C\phi \in \cD$, $C\in \cC$,
then $H\phi$ satisfies property $\cC$.
\end{lemma}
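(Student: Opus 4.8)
The plan is to verify property $\cC$ directly, by producing at each point an explicit orthonormal frame in $\cC$ along which the horizontal lift stays tangent to $\cC$; the computation rests on the characterization of property $\cC$ obtained in the proof of Theorem \ref{thm:differential-relation}.

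First I would unwind the hypothesis exactly as in the proof of Lemma \ref{lem:orthogonal-characteristics}. Factoring $C = B\Lambda$ with $B \in \mathrm{SO}(n)$ and $\Lambda \in \cD$, the condition $\cL^{2}_C\phi \in \cD(\Omega)$ is equivalent to $B^{T}H\phi B = M$ for some $M = M(x) \in \cD(\Omega)$, the essential point being that $B$ is a \emph{constant} orthogonal matrix. Hence $H\phi = B M B^{T}$ and, differentiating with $B$ constant, $[H\phi]_{,k} = B M_{,k} B^{T}$ with $M_{,k} \in \mathfrak{d}$.

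Next I would propose the frame $A := B M^{-1/2}$ and check the three conditions isolated at the end of the proof of Theorem \ref{thm:differential-relation}, which are equivalent to property $\cC$ holding along the direction $e_k$. One computes $AA^{T} = B M^{-1} B^{T} = H\phi^{-1}$, so $A$ is an orthonormal frame for $H\phi$; next $A^{T}A = M^{-1} \in \cD$, so $A \in \cC$; and finally $A^{T}[H\phi]_{,k}A = M^{-1/2} M_{,k} M^{-1/2} \in \mathfrak{d}$, being a product of diagonal matrices. Since property $\cC$ is linear in the velocity of the curve and the single frame $A$ serves every coordinate direction at once, arbitrary curves are covered.

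I expect no genuine obstacle: everything follows once the constancy of $B$ is recorded, which is precisely the structural content of having orthogonal characteristics. As a shortcut one could instead combine Lemma \ref{lem:orthogonal-to-symmetric}, which already shows the hypothesis forces symmetric Christoffel symbols, with Theorem \ref{thm:differential-relation}, which identifies symmetry of the Christoffel symbols with property $\cC$; I would nonetheless keep the explicit frame, since it also locates $A$ in the same leaf $B\cD$ of $\cF$ as the given $C$.
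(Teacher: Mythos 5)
Your proof is correct and follows essentially the same route as the paper: the paper takes the frame $C\Lambda^{-1/2}$ where $C^{T}H\phi\, C=\Lambda\in\cD(\Omega)$, which after the Cartan factorization $C=BD$ is literally your frame $A=BM^{-1/2}$, and verifies the same three conditions from the proof of Theorem \ref{thm:differential-relation}. The shortcut you mention (Lemma \ref{lem:orthogonal-to-symmetric} plus Theorem \ref{thm:differential-relation}) is also valid but is not the argument the paper records.
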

\begin{proof}
By definition $C^{T}H\phi C=\Lambda$, $\Lambda\in \cD$.
Therefore $C^{T}[H\phi]_{,k}C\in \mathfrak{d}$.
The matrix $C\Lambda^{-1/2}$ also belongs to $\cC$ and it is an orthonormal frame for $H\phi$.
Therefore
\[{(C\Lambda^{-1/2}})^{T}[H\phi]_{,k}C\Lambda^{-1/2}\in \mathfrak{d},\]
and thus property $\cC$ holds.
\end{proof}

Next we analyze up to which extent Hessian metrics with property $\cC$ are defined by functions with orthogonal characteristics.

\begin{definition}\label{def:Cartan_Kahler} 
 A Hessian metric $H\phi$ on $\Omega\subset \R^{n}$
 has property  $\mathcal{CK}$ if there exists a point $x\in \Omega$ and an orthonormal frame 
 $C\in \cC$ for $H\phi(x)$, such that for every curve in $\Omega$ based at $x$ its horizontal lift
 at $C$ is contained in $\cC$.
\end{definition}

% The leaves of the foliation $\cF$ of $\cA$ are canonically parametrized by $K$. A strictly convex function 
% $u:\Omega\to \R$ is a solution to
% \[\cL^{2}_Vu\in\mathfrak{d},\quad V\in \cA,\]
% iff it is a solution to 
% \[\cL^{2}_{VD}u\in \mathfrak{d},\quad V\in \cA,\,\,D\in \cD.\]
% Thus we can parametrize the previos P.D.E.'s by the orthogonal group $K$.

\begin{theorem}\label{thm:Cartan-Kahler}  If a  Hessian metric $H\phi$ on $\Omega$ 
 has property  $\mathcal{CK}$, then it solves  $\cL^{2}_C\phi \in \mathfrak{d}$, $C\in \cC$.
 
 Furthermore, the following conditions are equivalent:
 \begin{enumerate}
%  \item The function $\phi$ locally splits as a sum of potentials along the characteristic lines of $V$. 
 \item The image $H\phi(\Omega)$ is contained in the stratum $\Theta_{\cP}^{\kappa}$.
 \item Property $\mathcal{CK}$ holds for all orthonormal frames in $\cC$ over all points 
 of $H\phi(\Omega)$.
  \end{enumerate}
 In either case $\phi$ restricts to the leaves of the (parallel) foliation determined by the (rotation of) the subspace $\mathfrak{d}^{\kappa}$
 to a multiple 
 of the standard quadratic form (up to an affine summand).
 
%  Condition  $\mathcal{CK})$ holds for all $VC'$ for which $C'$ sends $q(V)$ to $\cD$ iff
%  $u$ restricted to the span of the characteristic
%  lines which are stabilized by all such $C'\in K$ is a quadratic form.

\end{theorem}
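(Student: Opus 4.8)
The plan is to read property $\mathcal{CK}$ through the involutive foliation $\cF$ of Proposition \ref{pro:non-linear-inv} and the desingularization $\pi|_\cC$ of Proposition \ref{pro:desingularisation}. For the first assertion, fix the witnessing point $x$ and frame $C\in\cC$, with Cartan factorization $C=B\Lambda$, $B\in\mathrm{SO}(n)$, $\Lambda\in\cD$. A horizontal lift that stays inside $\cC$ has velocity in $\mathrm{Hor}\cap T\cC$, which by Proposition \ref{pro:non-linear-inv}(2) is exactly $T\cF$; hence each such lift is an integral curve of $\cF$ and remains in the leaf through $C$, namely $B\cD$. Projecting by $\pi$ and invoking Proposition \ref{pro:desingularisation}(2), the curve $H\phi(\gamma)$ lies in $\pi(B\cD)=\mathrm{Ad}_B(\cD)$. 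Since $\Omega$ is connected, every point is joined to $x$ by a curve, so $H\phi(\Omega)\subset\mathrm{Ad}_B(\cD)$, i.e. $\cL^{2}_B\phi=B^{T}H\phi B\in\cD\subset\mathfrak{d}$; thus $\phi$ has orthogonal characteristics, and Lemma \ref{lem:orthogonal-characteristics} gives property $\cI$.

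Next I would isolate the one linear-algebra fact that drives the equivalence: for block-diagonal $R\in\mathrm{SO}(n)_\kappa$, the matrix $R^{T}MR$ is diagonal for \emph{every} such $R$ if and only if $M$ is a positive block-scalar diagonal matrix, i.e. $M\in\mathfrak{d}^{\kappa}\cap\cP$. Indeed the off-diagonal blocks must vanish, and each diagonal block must be scalar because a symmetric matrix kept diagonal under all of $\mathrm{SO}(k)$, $k\geq2$, is a multiple of the identity; conversely block-constant diagonals commute with $\mathrm{SO}(n)_\kappa$. Consequently $\bigcap_{R\in\mathrm{SO}(n)_\kappa}\mathrm{Ad}_{BR}(\cD)=\mathrm{Ad}_B(\mathfrak{d}^{\kappa}\cap\cP)$. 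I also record that, for a point with $H\phi(y)=B\Lambda_y B^{T}$, $\Lambda_y\in\Theta_\cD^{\kappa}$, the computation $\pi(B'\Lambda')=B'\,{\Lambda'}^{-2}\,{B'}^{T}$ shows that the orthonormal frames in $\cC$ over $y$ are exactly $BR\Lambda_y^{-1/2}$ with $R$ in the stabilizer $\mathrm{SO}(n)_\kappa$ of $\Lambda_y$, whose $\cF$-leaf is $BR\cD$ and projects to $\mathrm{Ad}_{BR}(\cD)$.

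With these in hand the equivalence is quick. For $(1)\Rightarrow(2)$: if $H\phi(\Omega)\subset\Theta_\cP^{\kappa}$ then $B^{T}H\phi B$ is block-constant, hence fixed by conjugation by any $R\in\mathrm{SO}(n)_\kappa$, so $H\phi(\Omega)\subset\mathrm{Ad}_{BR}(\cD)$ for every such $R$; therefore the horizontal lift at an arbitrary frame $BR\Lambda_y^{-1/2}$ is the lift into its leaf $BR\cD\subset\cC$, and $\mathcal{CK}$ holds there. For $(2)\Rightarrow(1)$: let $\kappa$ be the coarsest eigenvalue multiplicity pattern attained on $\Omega$, realized at a point $y_0$. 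Applying the hypothesis of $(2)$ to all frames $BR\Lambda_{y_0}^{-1/2}$, $R\in\mathrm{SO}(n)_\kappa$, and intersecting the resulting inclusions $H\phi(\Omega)\subset\mathrm{Ad}_{BR}(\cD)$ via the identity above forces $H\phi(\Omega)\subset\mathrm{Ad}_B(\mathfrak{d}^{\kappa}\cap\cP)$; thus every $H\phi(y)$ has eigenvalue pattern coarser than or equal to $\kappa$, and minimality of $\kappa$ upgrades this to equality, i.e. $H\phi(\Omega)\subset\Theta_\cP^{\kappa}$.

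Finally, in either case $B^{T}H\phi B\in\Theta_\cD^{\kappa}\subset\mathfrak{d}^{\kappa}$, so $\psi:=\phi\circ B$ has $H\psi\in\mathfrak{d}^{\kappa}$; integrating, $\psi=\sum_i\psi_i(z_i)$ with $\psi_i''=\psi_j''$ equal to a constant $c_\beta$ whenever $i,j$ lie in the same block $\beta$ of $\kappa$. Hence on each leaf of the parallel foliation by the $B$-rotations of the block-coordinate subspaces, $\psi$ (and so $\phi$) equals $\tfrac{c_\beta}{2}\lvert z_\beta\rvert^{2}$ up to an affine summand, which is the asserted conclusion. I expect the main obstacle to be the content of the middle paragraph — the intersection identity for the rotated Cartan subspaces together with the minimality argument in $(2)\Rightarrow(1)$; the scalar-block lemma is its crux, and one must be careful that quantifying over \emph{all} frames in $\cC$ (not merely one) is exactly what converts a single inclusion into $\mathrm{Ad}_B(\cD)$ into the sharper block-constant constraint.
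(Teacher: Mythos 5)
Your proof is correct and follows essentially the same route as the paper's: confinement of horizontal lifts to the leaves of $\cF$ via Proposition \ref{pro:non-linear-inv}, the intersection identity $\bigcap_{R\in \mathrm{SO}(n)_\kappa}\mathrm{Ad}_{R}(\mathfrak{d})=\mathfrak{d}^{\kappa}$ for the equivalence, and the minimal-stratum argument for $(2)\Rightarrow(1)$. The only (welcome) differences are that you prove the scalar-block lemma explicitly where the paper merely asserts the intersection identity, and you obtain the final quadratic-form conclusion by direct integration of $H\psi\in\mathfrak{d}^{\kappa}$ rather than by the paper's parallelism/flatness argument.
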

\begin{proof}
Let $x$ and $C$ be a point and orthonormal frame for $H\phi(x)$ with respect which condition $\mathcal{CK}$ is defined.
Because $\Omega$ is (path) connected for every point $y$ we have a curve $\gamma$ starting at $x$ and ending at $y$.
By hypotheses the horizontal lift of $H\phi(\gamma)$ is a curve $A(t)$ contained in $\cC$.
Therefore it is in $T\cC$ and horizontal. By (2) in Proposition \ref{pro:non-linear-inv} $A(t)$
is contained in the leaf $C\cD$ of $\cF$:
$A(t)=CD(t)$.
Because $A(t)$ is a curve of orthogonal frames  $A(t)^{T}H\phi (\gamma)A(t)=\mathrm{I}$.
Therefore
\[C^{T}H\phi C=D(t)^{-2}\in \cD.\]
Equivalently, if $C=B\Lambda$ then by  (2) in Proposition \ref{pro:desingularisation}
$\pi|_\cC$ sends the leaf $C\mathcal{D}$ 
diffeomorphically onto $\mathrm{Ad}_{B}(\mathcal{D})$, which is where the image of $H\phi$ must be confined.
The (local) splitting condition for $H\phi$ along the characteristics of $C$ can be also argued as follows:
that $A(t)\subset C\mathcal{D}$ implies that line fields at $x$ spanned by each column of $C$ are invariant by parallel transport. Thus
the de Rham Splitting Theorem applies (and Hessian metrics restrict to Hessian metrics).

If $H\phi\subset \Theta^{\kappa}_\cP$,  then by item (1) in Proposition \ref{pro:desingularisation2} the
frame $C\in \cC$ with respect to which $\mathcal{CK}$ is defined belongs to $\Theta^{\kappa}_{\cC}$. By item (2) in the same Proposition all horizontal curves
based at $C$ must be contained in the leaf $C\Theta^{\kappa}_{\cD}$ of the foliation of $\Theta^{\kappa}_{\cC}$ induced by $\cF$ upon 
clean intersection. The principal $\mathrm{SO}(n)_\kappa$-action takes these horizontal curves at $C$ to
 horizontal curves in $\Theta_{\cC}^{\kappa}$ based at any matrix in the fiber.

% The image by $q$ of all horizontal curves at $C$ is in $\Theta_{\cD}^{\kappa}$. Because 
% All diagonal matrices there have commom stabilizer and thus $\Theta^{\kappa}_\cP$ is a pricipal bundle for it. This right action takes
% a horizontal curve at $C$ to horizontal curves at all point in the fiber of $\pi|_{\cC}$ through $C$. This also 
% implies the same property for any curve in $H\phi(\Omega)\subset \pi(\Theta^{\kappa}_{\cC})=\Theta^{\kappa}_{\cD}$.
% a posteriori condition $(\cA_{CK})$ holds for all such orthonormal frames over any point.
% Because such horizontal paths must be  tangent to 
% \[
% \cH\cap T\Theta^{j}_\cA=\cH\cap T\cA\cap T\Theta^{j}_\cA=T\cF\cap T\Theta^{j}_\cA,
% \]
% by item (4) in Proposition \ref{pro:desingularisation} they are inside the (regular) leaf $V\cdot \Theta^{j}_\cD$.
% Because by items (2) and (4) in Proposition $(\Theta^{j}_\cA,K\Theta^{j}_\cD)$
% is invariant by right invariant multiplication by elements $C'$ which  send  $q(V)$ into $\cD$, 
% condition $(\cA_{CK})$ holds for any frame of the form $VC'$. 

Conversely, let $C=B\Lambda$ and assume that the horizontal lifts at $CB'$, $B'\in \mathrm{SO}(n)_{\Lambda}$, are contained
in $\cC$. Then they are inside the corresponding leaf of $\cF$ and therefore   
\[H\phi(\Omega)\subset \bigcap_{B'\in \mathrm{SO}(n)_{\Lambda}} \mathrm{Ad}_{BB'}(\mathcal{D})=
\mathrm{Ad}_B\left(\bigcap_{B'\in \mathrm{SO}(n)_{\Lambda}} \mathrm{Ad}_{B'}(\cD)\right).\]
Because the exponential intertwines the adjoint action the latter intersection can be understood in $\ss$. If $\Lambda\in \mathfrak{d}^{\kappa}$
we have
\[\mathfrak{d}^{\kappa}=\bigcap_{B'\in \mathrm{SO}(n)_{\Lambda}} \mathrm{Ad}_{B'}(\mathfrak{d}),\quad \mathrm{SO}(n)_{\Lambda}=\mathrm{SO}(n)_\kappa.\]
% (The subspace $\mathfrak{d}^{\kappa}$ is  also the closure of (the logarithm of) $\Theta^{\kappa}_{\cD}$). 
We used property $\mathcal{CK}$ with respect to an arbitrary point $x\in \Omega$. If we select the point whose image lies in the stratum of smallest dimension, 
then we conclude that 
$H\phi(\Omega)$ cannot leave that stratum.

If $H\phi\subset \Theta^{\kappa}_\cP$, then at any point $x\in \Omega$ all orthonormal frames in $\mathrm{SO}(n)_{\kappa}$ are parallel. This 
means that the common eigendirections $\mathrm{Ad}_B(\mathfrak{d}^{\kappa})$ are parallel. Therefore the restriction of $H\phi$ to the 
foliation given by the parallel translates of $\mathrm{Ad}_B(\mathfrak{d}^{\kappa})$ in $\Omega$ is flat. Hence on each such (affine) subspace
it is a quadratic form with equal eigenvalues. Therefore in the local splitting of $\phi$ along orthogonal characteristics we shall have
a multiple of the standard quadratic form along $\mathrm{Ad}_B(\mathfrak{d}^{\kappa})$.
% 
% The converse result is straightforward.
\end{proof}

The following result is more general than Theorem \ref{thm:propertyI-generic} in the Introduction.
% and describe a describe sufficient conditions for  on regular solutions in Proposition \ref{pro:reg-sols} to arbitrary dimensions to 
\begin{theorem}\label{thm:regular-strata} If A Hessian metric $H\phi$ on $\Omega$ has property  $\cC$ and 
$H\phi(\Omega)$ is contained in a stratum of $\Theta_\cP$, then it has property $\mathcal{CK}$. In particular $\phi$ has orthogonal characteristics.
 
\end{theorem}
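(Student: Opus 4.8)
The plan is to deduce Theorem~\ref{thm:regular-strata} by combining Theorem~\ref{thm:Cartan-Kahler} with the stratum-analysis of Proposition~\ref{pro:desingularisation2}. By Theorem~\ref{thm:differential-relation}, property $\cC$ is the same as symmetry of the Christoffel symbols of $H\phi$, so we may work throughout with the hypothesis that $H\phi$ has symmetric Christoffel matrices and that $H\phi(\Omega)\subset \Theta^{\kappa}_{\cP}$ for a fixed partition $\kappa$. The goal is to upgrade the \emph{pointwise, direction-by-direction} tangency condition $\cC$ into the \emph{global} statement $\mathcal{CK}$, namely that there is a single orthonormal frame $C\in\cC$ over a single point whose horizontal lift along \emph{every} curve stays inside $\cC$; once $\mathcal{CK}$ is established, Theorem~\ref{thm:Cartan-Kahler} immediately yields $\cL^{2}_C\phi\in\mathfrak{d}$, i.e.\ orthogonal characteristics.

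First I would fix a base point $x_0\in\Omega$ and choose an orthonormal frame $C_0\in\Theta^{\kappa}_{\cC}$ for $H\phi(x_0)$; this is possible because, by item~(1) of Proposition~\ref{pro:desingularisation2}, the preimage of $\Theta^{\kappa}_{\cP}$ under $\pi|_{\cC}$ is exactly $\Theta^{\kappa}_{\cC}$. The key point is that property $\cC$, being the direction-wise symmetry condition (\ref{eq:differential-relation}), guarantees that the horizontal lift of any curve $H\phi(\gamma)$ starting at $C_0$ is instantaneously tangent to $\cC$; since $H\phi(\gamma)$ is also confined to the stratum $\Theta^{\kappa}_{\cP}$, its horizontal lift is tangent to $\Theta^{\kappa}_{\cC}$. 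Now I invoke item~(2) of Proposition~\ref{pro:desingularisation2}: the foliation $\cF$ intersects $\Theta^{\kappa}_{\cC}$ cleanly and induces the foliation $\mathrm{SO}(n)\Theta^{\kappa}_{\cD}$, and on this stratum the horizontal distribution restricted to $T\cC$ coincides (by the intersection computation in Proposition~\ref{pro:non-linear-inv}) with the tangent to the leaves of $\cF$. Thus a horizontal curve that is tangent to $\Theta^{\kappa}_{\cC}$ and tangent to the horizontal distribution must be tangent to the leaf $C_0\Theta^{\kappa}_{\cD}$, hence it stays inside that leaf and in particular inside $\cC$. This is exactly property $\mathcal{CK}$.

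The main obstacle is the gap between the \emph{infinitesimal} tangency supplied by property~$\cC$ (the lift is tangent to $\cC$ at the single frame $C$, and the frame is allowed to depend on the direction) and the \emph{integral} containment required by $\mathcal{CK}$ (one frame works along the entire curve). The stratum hypothesis is what closes this gap: on $\Theta^{\kappa}_{\cP}$ the image curves cannot wander between strata, so by item~(1) of Proposition~\ref{pro:desingularisation2} the lift is forced to remain in the single stratum $\Theta^{\kappa}_{\cC}$, and there the horizontal distribution restricted to $T\cC$ is \emph{exactly} the leaf-wise tangent distribution of the (involutive, hence integrable) foliation $\cF$. Integrability then promotes instantaneous tangency to genuine containment in a leaf, removing the direction-dependence of the frame. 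I would present this as: the horizontal lift satisfies a first-order ODE whose velocity always lies in $T(C\Theta^{\kappa}_{\cD})$, so by uniqueness of solutions the lift remains in the leaf through $C_0$.

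Finally, with $\mathcal{CK}$ in hand for the frame $C_0$, the first conclusion of Theorem~\ref{thm:Cartan-Kahler} gives $\cL^{2}_{C_0}\phi\in\mathfrak{d}\subset\cD$, which is precisely Definition~\ref{def:orthogonal-characteristics} of orthogonal characteristics (after the harmless positive-diagonal normalization already discussed after that definition). Since Theorem~\ref{thm:propertyI-generic} is the special case $\kappa=\{\{1\},\dots,\{n\}\}$ — the open stratum of simple eigenvalues, where $\mathfrak{d}^{\kappa}=\mathfrak{d}$ and $\mathrm{SO}(n)_\kappa$ is finite — this more general statement indeed contains it, as claimed in the sentence preceding the theorem. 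I would remark that the only place regularity of the stratum enters is in guaranteeing the clean intersection of Proposition~\ref{pro:desingularisation2}(2), so no genericity beyond ``staying in one stratum'' is used.
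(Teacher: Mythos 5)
Your overall strategy --- use the stratum hypothesis together with the clean-intersection statements of Proposition \ref{pro:desingularisation2} to promote the infinitesimal tangency of property $\cC$ to containment in a leaf, then invoke Theorem \ref{thm:Cartan-Kahler} --- is the right one, and you correctly identify where the difficulty lies. But the mechanism you propose to close the gap does not work as written, for two related reasons. First, property $\cC$ only supplies, for each point and each curve, \emph{some} frame $C\in\cC$ at which the lift is tangent to $\cC$; it does not entitle you to fix a single $C_0$ once and for all and assert tangency of the lift based at $C_0$ (the condition $A^{T}[H\phi]_{,k}A\in\mathfrak{d}$ is not invariant under changing the orthonormal frame within $\cC$ when eigenvalues are repeated). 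Second, and more seriously, the claim that ``the lift is forced to remain in the single stratum $\Theta^{\kappa}_{\cC}$'' confuses $\pi^{-1}(\Theta^{\kappa}_{\cP})$ with $\pi|_{\cC}^{-1}(\Theta^{\kappa}_{\cP})=\Theta^{\kappa}_{\cC}$: the horizontal lift of a curve lying in the stratum is only constrained to stay in the former, whereas membership in $\cC$ is exactly what property $\mathcal{CK}$ asserts and what must be proved. Consequently your final ODE argument is circular: the distribution $T\cF$ (and its restriction $T(C\Theta^{\kappa}_{\cD})$) is only defined along $\cC$, so the assertion that ``the velocity always lies in $T(C\Theta^{\kappa}_{\cD})$'' presupposes that the lift has remained in $\cC$ up to time $t$.

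The paper escapes this circularity by descending to $\cP$, which is the step you omit. Using property $\cC$ at each time $t_0$, with a frame allowed to depend on $t_0$, the intersection computation $T\cC\cap T\pi^{-1}(\Theta^{\kappa}_{\cP})=T\Theta^{\kappa}_{\cC}$ together with horizontality shows that $A'(0)$ is tangent to the foliation $\mathrm{SO}(n)\Theta^{\kappa}_{\cD}$ of $\Theta^{\kappa}_{\cC}$; then item (3) of Proposition \ref{pro:desingularisation2} --- which you never invoke --- projects this to the statement that $\tfrac{d}{dt}H\phi(\gamma(t))$ is tangent to the honest foliation $\mathrm{Ad}_{\mathrm{SO}(n)}(\Theta^{\kappa}_{\cD})$ of $\Theta^{\kappa}_{\cP}$. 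This is a frame-independent condition on the curve \emph{downstairs}, valid for every $t$, so connectedness of $\Omega$ traps $H\phi(\Omega)$ in a single leaf $\mathrm{Ad}_{B}(\Theta^{\kappa}_{\cD})$. Only then does one lift back, using that $\pi|_{\cC}$ maps the horizontal leaf $B\cD$ diffeomorphically onto $\mathrm{Ad}_{B}(\cD)$, to obtain property $\mathcal{CK}$ --- now for every frame over every point, which retroactively justifies your choice of $C_0$. Your closing reduction of Theorem \ref{thm:propertyI-generic} to the open stratum is correct.
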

\begin{proof}
Let $H\phi(\Omega)$ be contained in $\Theta^{\kappa}_{\cP}$. This stratum is foliated by $\mathrm{SO}(n)\Theta^{\kappa}_{\cD}$ and
we want to show that for each curve $\gamma(t)$ in $\Omega$ the derivative of $H\phi(\gamma(t))$ is tangent to this foliation. By property $\cC$ for each $t_0$
there exists an orthogonal frame $C\in \cC$ such that the horizontal curve $A(t)$ at $C$ has derivative at zero tangent to $\cC$:
\[A'(0)\in T\cC\cap T\pi^{-1}(\Theta^{\kappa}_{\cP})=C\cdot \mathrm{ad}_\Lambda^{-1}(\mathfrak{d})\cap C\cdot (\mathfrak{so}(n)+\mathfrak{d}^{\kappa})=
C\cdot (\mathfrak{so}(n)_\Lambda+\mathfrak{d}^{\kappa})=T\pi|_{\cC}^{-1}(\Theta_\cP^{\kappa}).
\]
Therefore the intersection $\cC\cap \pi|_{\cC}^{-1}(\Theta_\cP^{\kappa})=\Theta^{\kappa}_{\cC}$ is clean
and $A'(0)$ belongs to $\Theta^{\kappa}_{\cC}$. Because the vector is horizontal by item (2) in Proposition \ref{pro:desingularisation2} it is 
tangent to the foliation  $\mathrm{SO}(n)\Theta^{\kappa}_\cD$. Thus by item (3) the
tangent vector of $H\phi(\gamma)$ at $t_0$ is tangent to the foliation $\mathrm{SO}(n)\Theta^{\kappa}_{\cD}$. Because 
$\Omega$ is connected this implies that $H\phi(\Omega)$ is contained in one of the leaves of $\mathrm{SO}(n)\Theta^{\kappa}_{\cD}$. 
% Because $A(t)\subset \pi^{-1}(\Theta^{\kappa}_\cP)$  
% a
% By items (2), (3) and (4) in Proposition \ref{pro:desingularisation} in condition $(\cA)$
% the orthonormal frame $V$ belongs to $\Theta^{j}_\cA$ and the tangent vector $V'$ belongs to
% $T\Theta^{j}_\cA$ and furthermore to the distribution of the foliation $K\Theta^{j}_\cD$.
% Therefore $Hu(\gamma)'$ is tangent to the foliation of $\Theta^{j}_\cP$ obtained by conjugating $\Theta^{j}_\cD$ 
% by $K$. This is a regular foliation and thus $Hu$ must be condined in a single leaf. By items (3) and (5) in Proposition 
% \ref{pro:desingularisation} each leaf of $K\Theta^{j}_\cD$ over the leaf containing $Hu$ is flat for $\cH$ and $\pi|_\cA$ induces a
% diffeomorphism among the leaves. Therefore
% the horizontal lift of any curve at an orthonormal frame in the leaf of $K\Theta^{j}_\cD$ must be contained in it. Thus $(\cA_{CK})$
% holds.
\end{proof}

The following result is a more precise statement that Theorem \ref{thm:propertyI-analytic} in the Introduction:  

\begin{theorem}\label{thm:Cartan-Kahler-analytic} Let $H\phi$ be a real analytic Hessian metric on $\Omega\subset \R^{n}$.
Then $\phi$ has property    $\cC$ if and only if it has property $\mathcal{CK}$.
In such case $H\phi$ is the restriction to $\Omega$ of a product Hessian metric on a (rotated) cube.
\end{theorem}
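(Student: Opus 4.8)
The plan is to establish the nontrivial implication: that for a \emph{real analytic} Hessian metric, property $\cC$ already forces property $\mathcal{CK}$. (The converse is the easy direction, which follows from Theorem \ref{thm:Cartan-Kahler}, whose hypothesis is exactly property $\mathcal{CK}$.) The strategy is to combine Theorem \ref{thm:regular-strata} on the open stratum with an analytic continuation / rigidity argument that propagates the conclusion across the singular strata of $\Theta_\cP$. First I would consider the open dense stratum $\Theta^{\std}_\cP$ of matrices with simple eigenvalues. Since $H\phi$ is real analytic and strictly convex, the preimage $\Omega_\reg := (H\phi)^{-1}(\Theta^{\std}_\cP)$ is an open subset of $\Omega$; away from the proper real analytic subvariety where the discriminant of the characteristic polynomial of $H\phi$ vanishes, every eigenvalue is simple, so $\Omega_\reg$ is open and dense with real analytic complement. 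On each connected component $U$ of $\Omega_\reg$, the image $H\phi(U)$ lies in a single stratum, so Theorem \ref{thm:regular-strata} applies directly: $H\phi|_U$ has property $\mathcal{CK}$ and in particular $\phi|_U$ has orthogonal characteristics, meaning there is $C_U\in\cC$ with $H\phi(U)\subset\mathrm{Ad}_{B_U}(\cD)$ for the $\mathrm{SO}(n)$-factor $B_U$ of $C_U$.

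The core of the argument is then to show that the orthogonal frame $C_U$ — equivalently the collection of characteristic line fields — is globally constant, so that it extends across the singular locus to all of $\Omega$. Here I would exploit real analyticity in two ways. First, on $\Omega_\reg$ the characteristic directions are the eigendirections of $H\phi$; by Theorem \ref{thm:regular-strata} they are parallel for the Levi--Civita connection, hence locally constant along $U$ (as line fields in $\R^n$), so $B_U$ is a \emph{constant} element of $\mathrm{SO}(n)$ on each component $U$. Second, I would argue that these constants agree across components: the map sending $x$ to the (unordered) eigendirection configuration of $H\phi(x)$ is real analytic on $\Omega_\reg$, and a real analytic line field that is locally constant on a connected open set whose complement has positive codimension must be globally constant, because the condition $[\,H\phi\,,\,E\,]=0$ for a fixed frame $E$ is an analytic equation that, holding on an open subset, holds everywhere by the identity theorem for real analytic functions. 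Thus there is a single $B\in\mathrm{SO}(n)$ with $H\phi(x)\in\mathrm{Ad}_B(\ss)$ diagonalized by $B$ for all $x\in\Omega_\reg$, and by continuity and analyticity for all $x\in\Omega$.

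With a globally fixed rotation $B$ in hand, I would change coordinates by $B$ and reduce to the case where $H\phi(x)$ is diagonal everywhere on $\Omega_\reg$; by analyticity $B^{T}H\phi(\cdot)B$ is diagonal on all of $\Omega$, which is precisely the statement $\cL^2_C\phi\in\cD(\Omega)$ with $C=B$, i.e. $\phi$ has orthogonal characteristics globally. Property $\mathcal{CK}$ then follows from the lemma immediately preceding this theorem (orthogonal characteristics imply property $\cC$) together with the global frame: the horizontal lift at the constant frame $C$ stays in the leaf $C\cD$ over all of $\Omega$, which is exactly property $\mathcal{CK}$. Finally, diagonality of the Hessian everywhere gives $\phi\circ B(x)=\phi_1(x_1)+\cdots+\phi_n(x_n)$ with each $\phi_i$ strictly convex and real analytic; since a one-variable strictly convex analytic function has an interval domain, the domain is a product of intervals, i.e. $H\phi$ is the restriction to $\Omega$ of a product Hessian metric on a rotated cube, as claimed.

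I expect the main obstacle to be the \emph{global} rigidity step: ensuring the characteristic frame does not jump or rotate as one crosses the singular strata where eigenvalues collide. On the singular locus the individual eigendirections are not defined (only eigenspaces are), so the identity-theorem argument must be phrased carefully in terms of analytic objects that remain well-defined there — for instance the full symmetric matrix $H\phi$ and the constant frame $B$, via the analytic equation $B^{T}H\phi B\in\mathfrak d$ — rather than in terms of the individual line fields. The delicate point is to verify that the locally constant frames produced componentwise by Theorem \ref{thm:regular-strata} are forced to be the \emph{same} constant across all of $\Omega_\reg$; this is where connectedness of $\Omega$, density of $\Omega_\reg$, and the analyticity of $H\phi$ must be combined, and it is the one place where the analytic hypothesis is genuinely indispensable (the non-analytic bifurcating examples of Section \ref{sec:bifurcations} show the conclusion can fail without it).
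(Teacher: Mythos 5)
Your overall architecture is the right one and is essentially the paper's: produce an open set on which Theorem \ref{thm:regular-strata} applies, extract from Theorem \ref{thm:Cartan-Kahler} a single constant frame $B$ with $B^{T}H\phi B$ diagonal there, and then invoke the identity theorem for real analytic functions to propagate the analytic condition $B^{T}H\phi B\in\mathfrak d$ from that open set to all of the connected domain $\Omega$; property $\mathcal{CK}$ then follows because $\pi|_{\cC}:B\cD\to\mathrm{Ad}_B(\cD)$ is a diffeomorphism from a horizontal leaf. You also correctly observe that the identity-theorem step needs only \emph{one} open set, not density, which is exactly how the paper uses analyticity.

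The genuine gap is in how you produce that open set. You take $\Omega_{\reg}=(H\phi)^{-1}(\Theta^{\std}_{\cP})$ and assert it is open and dense because the discriminant of the characteristic polynomial of $H\phi$ vanishes only on a proper analytic subvariety. But the discriminant can vanish identically: there are real analytic Hessian metrics with property $\cC$ whose Hessian \emph{never} has simple eigenvalues, e.g.\ $\phi(x)=\phi_1(x_1)+k(x_2^{2}+\cdots+x_n^{2})$ for $n\geq 3$ with $\phi_1''\neq 2k$, or any $\phi$ whose Hessian image sits entirely inside lower strata. For such $\phi$ your $\Omega_{\reg}$ is empty and the argument never starts, even though the statement is not vacuous there. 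The repair is precisely the paper's opening move: since $\Theta_{\cP}$ has finitely many strata, some stratum $\Theta^{\kappa}_{\cP}$ has preimage with nonempty interior $\Omega'\subset\Omega$; Theorems \ref{thm:regular-strata} and \ref{thm:Cartan-Kahler} then give $H\phi(\Omega')\subset\mathrm{Ad}_B(\Theta^{\kappa}_{\cD})$ for a single $B\in\mathrm{SO}(n)$, and your identity-theorem step runs verbatim from there, since the real analytic closure of $\mathrm{Ad}_B(\Theta^{\kappa}_{\cD})$ still lies in $\mathrm{Ad}_B(\mathfrak d)$. A second, smaller imprecision: at the end you conclude that ``the domain is a product of intervals,'' but $\Omega$ itself need not be a cube; the claim is that $H\phi$ extends to a product metric on the cube of projected intervals, and since the splitting $\phi(Bx)=\phi_1(x_1)+\cdots+\phi_n(x_n)$ is a priori only local, one must still glue the one-variable pieces over the (contractible) projected intervals, as the paper does in its final paragraph.
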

\begin{proof}
The stratification $\Theta_\cP$ has a finite number of strata. Therefore there exists one stratum 
$\Theta^{\kappa}_\cP$ whose pullback by $H\phi$ has non-empty interior $\Omega'\subset \Omega$.
 By Theorem \ref{thm:regular-strata} and Theorem \ref{thm:Cartan-Kahler} $H\phi(\Omega')\subset \mathrm{Ad}_B({\Theta^{\kappa}_\cD})$, $B\in \mathrm{SO}(n)$.
 In particular $H\phi(\Omega)$ must be contained in the real analytic submanifold $\mathrm{Ad}_B(\cD)$\footnote{The real analytic closure of 
  $\mathrm{Ad}_B(\Theta^{\kappa}_\cD)$ is the exponential of $\mathrm{Ad}_B(\mathfrak{d}^\kappa)$. This means that $H\phi(\Omega)$
  can only intersect strata of dimension equal or less to that of $\Theta_\cP^{\kappa}$; the equidimensional strata are those which under permutations go to 
  open subsets of $\mathfrak{d}^{\kappa}$).}. By Proposition 
  \ref{pro:desingularisation2} the restriction 
  \[\pi|_{\cC}:B\cD\to \mathrm{Ad}_B({\cD})\]
 is a diffeomorphism from a horizontal submanifold. Therefore all lifts of curves in $\Omega$ at $C\in \Theta^{\kappa}_{\cC}$ 
  are contained in $B\cD\subset \cC$, which is property $\mathcal{CK}$.
 
The image of $\Omega$ by the orthogonal projection onto a characteristic line is connected, and hence an interval.
The restriction of $\phi$ to the foliation of $\Omega$ by affine lines parallel to the characteristic line is locally projectable.
Because the interval has trivial topology local projections must agree on overlaps. 
\end{proof}

\section{Bifurcation of orthogonal characteristics along quadratic forms}\label{sec:bifurcations}

We shall construct a family of Hessian metrics with property $\cI$ which do not have orthogonal characteristics, and, hence, by Theorem 
\ref{thm:Cartan-Kahler} do not have property $\mathcal{CK}$. The family, despite being defined on domains which are not necessarily convex, 
will be also invariant under Legendre transform.

Firstly, we shall describe the domains $\Omega$ we are interested in.
Let $\Omega^{0}$ be an (open convex) \emph{polytope}. 
By this we mean a domain defined as the points where a finite number of affine maps are strictly positive; the zero set of each such map is a 
\emph{supporting hyperplane}. The closure of the polytope need not be compact.
Let $\R^{1}_l\times \R^{n-1}_l$ be the result of applying an orthogonal transformation $B_l$, $1\leq l\leq m$, to the splitting
$\R^{n}=\R\times \R^{n-1}$. We consider the polytope $\Omega^{1}_l=I_l\times F_l$, where the factors are polytopes in $\R_l^{1},\R^{n-1}_l$, respectively;
we shall refer to $\R^{1}_l$ as the \emph{primary characteristic} of $\Omega^{1}_l$.
We shall assume that $\R_l^{1}$ is oriented and we shall denote by $p_l$ the infimum of the interval $I_l$ (the interval may not be bounded from above).
we shall refer to $H_l:=p_l\times \R^{n-1}_l$ as the \emph{primary supporting hyperplane}.

 We shall assume that
\begin{itemize}
 \item the polytopes $\Omega^{0},\Omega^{1}_1,\dots,\Omega^{1}_m$ are disjoint 
 and that $\Omega^{1}_i$ and $\Omega^{1}_j$, $i\neq j$, have disjoint closure;
\item the primary supporting hyperplane $H_l$ for  $\Omega^{1}_l$ is also a supporting 
hyperplane for $\Omega^{0}$ and $\partial \Omega^{1}_l\cap H_l\subset \partial \Omega^{0}\cap H_l$.
\end{itemize}
We define $\Omega$ to be 
\begin{equation}\label{eq:polytope-with-handles}
\Omega=\Omega^{0}\bigcup (\Omega^{1}_1\cup p_1\times F_1)\bigcup \cdots \bigcup (\Omega^{1}_m\cup p_m\times F_m).
\end{equation}
We refer to $\Omega$ as in (\ref{eq:polytope-with-handles}) as a {\bf polytope with 1-handles}.

Secondly, we shall introduce appropriate strictly convex functions on the polytope with 1-handles.
Let $\phi_{0}$ be a multiple of the standard quadratic form on $\R^{n}$:
\[\phi_{0}(x)=k(x_1^{2}+\cdots +x_n^{2}).\]
Let $y=(y_1,\dots,y_n)$ be the coordinates which correspond to the image by $B_l$ of the canonical basis $e_1,\dots,e_n$ and let 
$q_l(y)=q_l(y_2,\dots,y_l)=k(y_1^{2}+\cdots +y_n^{2})$. Then we have:
\begin{equation}\label{eq:quadratic-orthogonal}
\phi_0(y)=k(y_1^{2}+\cdots +y_n^{2})=ky_1^{2}+q_l(y_2,\dots,y_l).
\end{equation}
% 
% Let $p_l=A_l\cap H_j$. If fix the origin of affine space $\R^{n}$ to be $p_l$ 
% then $\phi^{0}$ is no longer a quadratic form but a degree 2 homogeneous polynomial in $x$:
%  \[\phi_{0}(p_l+x)=\phi^{0}(p_l)+L_l(x)+\phi_{0}(x).\]
% Let us change the canonical basis to an orthogonal basis whose first vector lies in $\overset{\rightarrow}{A}_l$ (and points out of $\Omega^{0}$, say).
% If $y^{l}=y=(y_1,\dots,y_n)$ are the coordinates relative to that orthonormal basis, then because $\phi_{0}$ is a multiple of the stadar quadratic form 
% the polynomial splits as a polynomial in the 
% first variable and a polynomial in the last $n-1$-variables whose degree two homogeneous parts are still multiples of the standar quadratic form:
% \[
% \phi_{0}(p_j+y)=q_l(y_1)+q_l(y_2,\cdots,y_n),\]
% \begin{equation}\label{eq:quadric-split}
% q_l(y_1)=\phi_0(p_l)+L_l(y_1)+ky_1^{2},\quad q_l(y_2,\dots,y_l)=L_l(y_2,\dots,y_n)+k(y_2^{2}+\cdots +y_n^{2}).
% \end{equation}

% Let us define 
% \[\phi_{l}(p_l+y)=\phi_{l}(y_1)+q_l(y_2,\dots,y_l),\quad p_l+y_1\in R_l,\quad (y_2,\dots,y_l)\in F_l,\]
% where $\phi_{l}\in C^{\infty}(R_l)$ and is tangent at $p_l$ to $q_l(y_1)$ at infinite order.

\begin{proposition}\label{pro:bifurcations} Let $\phi$ be the function defined on the polytope with 1-handles $\Omega$ as follows:
 \[\phi|_{\Omega^{0}\cup p_1\times F_1\cup\cdots \cup p_m\times F_m}=\phi_0,\quad \phi|_{\Omega^{1}_l}=\phi_{l}+q_l,\]
 where $\phi_l(y)=\phi_l(y_1)$ a strictly convex smooth function on $I_l$ tangent at $p_l$ to $ky_1^{2}$ at infinite order. 
Then it has the following properties:
\begin{enumerate}
 \item It is a smooth and strictly convex function on $\Omega$.
 \item It has property $\cI$.
 \item The image $H\phi(\Omega)\subset \cP$ is contained in the union of the closed stratum and the two open strata of lowest dimension of $\Theta_\cP$.
  \item If there are two 1-handles on which $\phi_l$ is not a quadratic form and the primary characteristic are neither equal not perpendicular,
  then $H\phi$  does not have property $\mathcal{CK}$.
\end{enumerate} 
\end{proposition}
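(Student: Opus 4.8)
The plan is to verify each of the four claims in turn, leaning on the geometric machinery developed in the earlier sections, especially the stratification results of Proposition \ref{pro:desingularisation2} and the characterization of property $\mathcal{CK}$ in Theorem \ref{thm:Cartan-Kahler}.

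For claim (1), the function $\phi$ is built by gluing $\phi_0$ on the central region to $\phi_l + q_l$ on each handle $\Omega^1_l$. First I would observe that along the primary supporting hyperplane $H_l$ the two pieces must be compared: by (\ref{eq:quadratic-orthogonal}), in the $y$-coordinates adapted to $B_l$ we have $\phi_0(y) = ky_1^2 + q_l(y_2,\dots,y_n)$, while on the handle $\phi = \phi_l(y_1) + q_l(y_2,\dots,y_n)$. Since $\phi_l$ is tangent to $ky_1^2$ at $p_l$ to infinite order, all partial derivatives of the two expressions agree on $H_l$, so the glued function is $C^\infty$. Strict convexity on each open piece is clear ($\phi_0$ is a positive multiple of the standard form, and $\phi_l + q_l$ is a sum of a one-variable strictly convex function and a strictly convex quadratic in the remaining variables, split orthogonally), and the infinite-order tangency guarantees the Hessian extends continuously and stays positive-definite across $H_l$. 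The only subtlety is checking that the disjointness and supporting-hyperplane hypotheses make $\Omega$ a well-defined open set on which the piecewise definition is unambiguous; this is where the two bulleted assumptions on the polytopes are used.

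For claims (2) and (3), property $\cI$ should follow region-by-region together with the gluing. On $\Omega^0$ (and the glued faces $p_l \times F_l$) the function is a quadratic form, which trivially has orthogonal characteristics and hence property $\cI$ by Lemma \ref{lem:orthogonal-characteristics}; on each handle $\Omega^1_l$ the function $\phi_l(y_1) + q_l(y_2,\dots,y_n)$ is a sum of functions in the orthogonal coordinate groups determined by $B_l$, so it has orthogonal characteristics with respect to $C = B_l$ and again property $\cI$. Since property $\cI$ is the local pointwise PDE (\ref{eq:inversible}) and $\phi$ is $C^\infty$, verifying it on the dense open union of the regions and invoking continuity of the third derivatives across $H_l$ gives property $\cI$ everywhere. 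For claim (3), I would compute $H\phi$ explicitly: on $\Omega^0$ it equals $2k\,\mathrm{I}$, which has a single eigenvalue of multiplicity $n$ and hence lies in the deepest (closed, lowest-dimensional) stratum; on a handle where $\phi_l \neq ky_1^2$ the Hessian in $y$-coordinates is $\mathrm{diag}(\phi_l''(y_1), 2k, \dots, 2k)$, whose eigenvalues are $2k$ with multiplicity $n-1$ and $\phi_l''(y_1)$, so it lies in the stratum $\Theta^\kappa_\cP$ with $\kappa$ the partition $(1, n-1)$, one of the two lowest open strata. Identifying exactly which partition $\kappa$ occurs, and confirming the image meets no stratum of higher codimension type, is the bookkeeping step here.

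Claim (4) is the main obstacle and the one carrying the geometric content. The strategy is to argue by contradiction: suppose $H\phi$ had property $\mathcal{CK}$. By Theorem \ref{thm:Cartan-Kahler}, property $\mathcal{CK}$ forces $\phi$ to have genuine orthogonal characteristics, i.e.\ there is a single $C \in \cC$ with $\cL^2_C\phi \in \cD(\Omega)$ globally, and correspondingly the image $H\phi(\Omega)$ is confined to a single $\mathrm{Ad}_B(\cD)$. The point is that the two non-quadratic handles, with primary characteristics that are neither equal nor perpendicular, force incompatible factorizations. Concretely, on the first handle property $\mathcal{CK}$ would require the orthonormal frame (equivalently the eigendirections of $H\phi$) to be aligned with $B_1$, and on the second handle with $B_2$; but these are connected through the central region $\Omega^0$ where $H\phi = 2k\,\mathrm{I}$ is isotropic and imposes no constraint, so the global parallel frame demanded by $\mathcal{CK}$ would have to reconcile the $B_1$-splitting with the $B_2$-splitting. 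I would make this precise by tracking the parallel transport of the characteristic line fields (as in the de Rham splitting argument inside the proof of Theorem \ref{thm:Cartan-Kahler}) through a path from one handle to the other: the non-quadratic directions single out distinct one-dimensional eigenspaces on each handle, and their primary characteristics being neither parallel nor orthogonal means no single orthogonal frame $C$ can simultaneously diagonalize $H\phi$ in the sense required by (\ref{eq:orthogonal-characteristics}) on both handles. This contradiction with the rigidity of property $\mathcal{CK}$ is the crux; the hard part is to show cleanly that the isotropic central region does not secretly permit a continuous interpolation between the two frames, which amounts to observing that once the characteristics are pinned down on each handle they propagate as parallel line fields whose relative angle is a parallel-transport invariant and hence cannot be both $0$ and $\pi/2$ simultaneously.
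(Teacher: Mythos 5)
Your proposal is correct and follows essentially the same route as the paper: smoothness and convexity from the orthogonal splitting (\ref{eq:quadratic-orthogonal}) plus infinite-order tangency, property $\cI$ region by region via orthogonal characteristics, the explicit eigenvalue count $(2k,\dots,2k,\phi_l'')$ for the stratum claim, and for (4) the observation that property $\mathcal{CK}$ via Theorem \ref{thm:Cartan-Kahler} forces a single global frame $C$, which on each non-quadratic handle is constrained to the coset $B_l\mathrm{SO}(n)_\kappa\cD$, and these cosets are disjoint precisely when the primary characteristics are neither equal nor perpendicular. Your worry about interpolation through the isotropic central region is unnecessary extra caution: Theorem \ref{thm:Cartan-Kahler} already pins down one fixed $C$ for all of $\Omega$, so the contradiction is immediate from the disjointness of the two cosets.
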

\begin{proof}
That $\phi$ is smooth and strictly  convex is a consequence of (\ref{eq:quadratic-orthogonal}) and of the definition of $\phi_l$. 
% By hypotheses if we extend  $\phi_l(p_j+y_1)$ for negative values of $y_1$ so it matches the polynomial $q_l(y_1)$, we obtain a smooth strictly 
% convex function on $A_l$. Therefore $\phi\in C^{\infty}(\Omega)$ and it is strictly convex. 

The restriction of $\phi$ to $\Omega^{0}$ has property $\cI$; the restriction to each $\Omega^{1}_l$ has orthogonal characteristics given by the columns of 
$B_l$. Therefore $\phi$ has property $\cI$ on the closure of the union, which is $\Omega$.

 By construction $H\phi$ sends $\Omega^{0}$ to the closed stratum of $\Theta_\cP$, and thus so $\overline{\Omega^{0}}$; if $\phi_l$ is not a quadratic form, then 
 $H\phi$ sends an open subset of $\Omega^{1}_l$ to the strata positive matrices with two eigenvalues so that one is simple:
\[H\phi(\overline{\Omega^{0}})\subset \bigcap_{B\in \mathrm{SO}(n)}\mathrm{Ad}_B(\cD),\quad H\phi(\Omega^{1}_l)\cap
  \mathrm{Ad}_{B_l}\left(\bigcap_{B'\in S(\mathrm{O}(1)\times \mathrm{O}(n-1))}\mathrm{Ad}_{B'}(\cD)\right)\neq \emptyset.\]

By Theorem \ref{thm:Cartan-Kahler} if $H\phi$ has property $\mathcal{CK}$ then $\phi$ has orthogonal characteristics
for some $C\in \cC$ (or $B\in \mathrm{SO}(n)$). On a 
 1-handle $\Omega_l^{1}$ with $\phi_l$  different from a quadratic form $\phi$ has orthogonal characteristics exactly for all $B_l\mathrm{SO}(n)_\kappa \cD$. 
The 1-handles $\Omega_i^{1}$ and $\Omega_j^{1}$ have primary characteristics which are neither equal nor orthogonal 
if and only if $B_i\mathrm{SO}(n)_\kappa\cap  B_j\mathrm{SO}(n)_\kappa=\emptyset$.
\end{proof}

\begin{proposition}\label{pro:Legendre-polytope-1-handles} Let $\Omega$ be a polytope with 1-handles and let $\phi\in C^{\infty}(\Omega)$ as in Proposition \ref{pro:bifurcations}.
Then the following holds:
\begin{enumerate}
 \item The Legendre map $d\phi$ on $\Omega$ is a diffeomorphism, its image $\Omega^{*}$ is a polytope with 1-handles, and ${\Omega^{1}_l}^{*}$ and $\Omega^{1}_l$ have the same primary characteristic.
 \item $\phi^{*}$ is a function as in Proposition \ref{pro:bifurcations}.
\end{enumerate}
\end{proposition}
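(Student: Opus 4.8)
The plan is to prove Proposition \ref{pro:Legendre-polytope-1-handles} by exploiting the product structure of $\phi$ along the primary characteristics together with the compatibility of the Legendre transform with such splittings. First I would record the basic fact that if a strictly convex function splits as a sum of functions in complementary (orthogonal) groups of variables, its Legendre transform splits correspondingly and along the same orthogonal decomposition. Concretely, on each handle $\Omega^1_l = I_l \times F_l$ we have $\phi = \phi_l(y_1) + q_l(y_2,\dots,y_n)$ in the rotated coordinates $y = B_l^{-1}x$; since $q_l$ is a multiple of the standard quadratic form and $\phi_l$ is a strictly convex function of one variable, the Legendre transform factorizes as $\phi^*(\eta) = \phi_l^*(\eta_1) + q_l^*(\eta_2,\dots,\eta_n)$ in the dual rotated coordinates. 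Because an orthogonal change of coordinates commutes with the Legendre transform (the dual coordinates transform by the same orthogonal matrix $B_l$, as $H\phi(B_l x) = B_l^T H\phi(x) B_l$ and the dual variable is $d\phi$), the primary characteristic direction $\R^1_l$ is preserved: this is the content of the assertion ${\Omega^1_l}^* $ and $\Omega^1_l$ share the same primary characteristic.

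Next I would analyze the three pieces separately and then verify that they glue into a polytope with 1-handles. On the core $\Omega^0$ the function is $\phi_0 = k|x|^2$, whose Legendre transform is again a multiple of the standard quadratic form $\phi_0^* = \tfrac{1}{4k}|\eta|^2$; so the image $d\phi(\Omega^0)$ is the image of a polytope under the \emph{linear} map $x \mapsto 2kx$, hence again a polytope $\Omega^{0*}$. On each handle, $\phi_l$ is tangent to $k y_1^2$ at $p_l$ to infinite order, so $\phi_l'$ agrees with $2k y_1$ to infinite order at $p_l$; thus the dual handle ${\Omega^1_l}^* = I_l^* \times F_l^*$ is again a product polytope, with $F_l^* = d(q_l)(F_l)$ a linearly-rescaled polytope and $I_l^* = \phi_l'(I_l)$ an interval. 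The key matching to check is that the dual primary supporting hyperplane $H_l^*$ for ${\Omega^1_l}^*$ is a supporting hyperplane of $\Omega^{0*}$ and that $\partial {\Omega^1_l}^* \cap H_l^* \subset \partial \Omega^{0*} \cap H_l^*$; because $d\phi$ is a global diffeomorphism carrying $H_l$ (where $\phi = \phi_0$ by the infinite-order tangency) linearly onto its image, this adjacency relation transports directly from the original configuration. Finally, since $\phi^*$ restricts to $\phi_0^*$ on the dual core and to $\phi_l^* + q_l^*$ on each dual handle, with $\phi_l^*$ a one-variable strictly convex function tangent at the dual infimum point to the dual quadratic form to infinite order (this last infinite-order tangency follows from differentiating the identity $\phi_l^*(\phi_l'(y_1)) = y_1 \phi_l'(y_1) - \phi_l(y_1)$), we conclude that $\phi^*$ is of the form in Proposition \ref{pro:bifurcations}, proving (2).

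The main obstacle I anticipate is the \emph{gluing} step: Legendre transform is defined a priori on convex domains, but a polytope with 1-handles is not convex, so I cannot simply invoke Proposition \ref{pro:legendre} globally. The resolution is that the Legendre map $d\phi$ is still a well-defined local diffeomorphism on all of $\Omega$ (since $\phi$ is smooth and strictly convex there by part (1) of Proposition \ref{pro:bifurcations}), and the infinite-order tangency of $\phi_l$ to $ky_1^2$ at the primary supporting hyperplane guarantees that the gradient $d\phi$ extends across each handle-to-core interface exactly as the linear gradient $2kx$ does for $\phi_0$. Thus $d\phi$ is globally injective — it is injective on the convex core, injective on each handle, and the infinite-order matching forces the handle images to attach to the core image along the correct dual supporting hyperplanes without overlap, using the disjointness hypotheses on the closures of distinct handles. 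Verifying this injectivity and the non-overlap of the dual pieces carefully is where the real work lies; once it is in place, the identification of $\Omega^*$ as a polytope with 1-handles and the structure of $\phi^*$ follow by transporting the defining data piece by piece.
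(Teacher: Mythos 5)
Your proposal is correct and follows essentially the same route as the paper: strict convexity makes $d\phi$ a local diffeomorphism, the quadratic core and the split form $\phi_l+q_l$ on each handle (together with the commutation of the Legendre transform with orthogonal maps) show each piece maps to a piece of the same type with the same primary characteristic, and the disjointness hypotheses on the handles yield global injectivity and the non-overlap of the dual pieces. Your extra details (the explicit $\phi_0^*=\tfrac{1}{4k}|\eta|^2$ and the infinite-order tangency of $\phi_l^*$ via the identity $\phi_l^*(\phi_l'(y_1))=y_1\phi_l'(y_1)-\phi_l(y_1)$) are consistent with, and slightly more explicit than, the paper's argument.
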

\begin{proof}
Because $\phi$ is strictly convex  $d\phi:\Omega \to \R^{n}$ is a local diffeomorphism. 
Because $\phi|_{\Omega^{0}}$ is a quadratic form
$d\phi(\Omega^{0})$ is another polytope. The restriction  $\phi|_{\Omega^{1}_l=I_l\times F_l}$ decomposes as 
a sum of strictly convex functions $\phi_l+q_l$. The subset $d\phi(\Omega^{1}_l)$ is another  1-handle
because $I_l$ is 1-dimensional, $F_l$ is a polytope and $q_l$ is a quadratic form;
furthermore, because the Legendre transform commutes with orthogonal transformations
 $d\phi(\Omega^{1}_l)=I_l^{*}\times F_l^{*}$ where the product decomposition is also with respect to $\R_l^{1}\times \R^{n-1}_l$. 
The condition on the non-overlap of the closures of the 1-handles can be restated as follows: if two 1-handles have common primary supporting hyperplane, 
then their polytopes there have non-intersecting closure. This implies that if we prolongue each $I_l\subset \R^{_1}_l$ across $p_l$ to a
larger interval $\tilde{I}_l$ so that $\tilde{I}_l\times F_l\subset \Omega^{0}\cup p_l\times F_l\cup \Omega^{1}_l$, then 
\[d\phi(\Omega^{0})\cap d\phi(p_l\times F_l\cup \Omega^{1}_l)=\emptyset,\quad  d\phi(d\phi(\tilde{I}_i\times F_i)\cap d\phi(\tilde{I}_j\times F_j)\cap (\R^n\backslash d\phi(\Omega^{0}))=\emptyset,\quad i\neq j.\]
Therefore $d\phi:\Omega\to d\Omega$ is a bijection and thus a diffeomorphism onto another polytope with 1-handles.

% \[\Omega^{*}={\Omega^{0}}^{*}\bigcup ({\Omega^{1}_1}^{*}\cup (\partial {\Omega^{1}_1}^{*}\cap H_1^{*}))\bigcup \cdots 
% \bigcup ({\Omega^{1}_m}^{*}\cup (\partial {\Omega^{1}_m}^{*}\cap H_m^{*})).\]
 
Because the Legendre transform of a multiple of the standard quadratic form is a multiple of the standard quadratic form 
it follows that $\phi^{*}$ belongs to the class of functions described in  Proposition \ref{pro:bifurcations}.
\end{proof}

\section{An application to Poisson geometry}\label{sec:Poisson-commuting}
We shall show that property $\cI$ for strictly convex functions  condition is equivalent to the Poisson commuting equation for Poisson structures related to Kahler forms on toric
varieties. We shall use 
\begin{enumerate}[(a)]
 \item our classification of real analytic inversible Hessian metrics to deduce a factorization result;
\item the family of strictly convex functions with property $\cI$ introduced in  Section \ref{sec:bifurcations} to produce pencils of Poisson structures on regions of projective varieties which interpolate
from a Kahler structure to a Poisson structure with a finite number of Kahler leaves.
\end{enumerate}

\begin{definition}\label{def:toric-Poisson}\cite[Section 4]{BFM} A Poisson structure $\Pi$ on a toric variety $(X,\T)$ is 
\begin{enumerate} 
\item {\bf toric} if the bivector field $\Pi$
is $\T$-invariant, of type $(1,1)$ and positive, and if the symplectic leaves of  $\Pi$ equal the finitely many orbits of the torus action;
\item{\bf totally real} if the orbits of the (maximal) compact torus $T$ are coisotropic submanifolds. 
\end{enumerate}
 \end{definition}
\begin{remark}\label{rem:dformation} Totally real toric Poisson structures are good candidates to be limits of Hamiltonian Kahler forms: 
for such a form its 
% 
% if we have a Kahler form on $(X,\T)$ for which the action of the compact torus $T$ is Hamiltonian, then its
inverse Poisson bivector
is $T$-invariant, of type $(1,1)$ and positive; there is a unique symplectic leaf of which the $T$-orbits are Lagrangian submanifolds.
Thus it is natural to look for converging sequences of such bivectors so that in the limit
the unique symplectic leaf breaks into the finitely many orbits and the $T$-symmetry is enlarged to $\T$-symmetry. One possible source 
would be a totally real toric Poisson structure which Poisson commutes with (the inverse of) a Hamiltonian Kahler form. In such case the convex combination
of the bivectors would be a smooth family of (inverses of) Hamiltonian Kahler forms converging to the totally real toric Poisson structure.
\end{remark}

On a toric variety a $\T$-invariant Poisson structure has a simple infinitesimal description: its restriction to the open dense orbit ---
which upon fixing a point  is identified
with $\T$ --- followed by the logarithm map,  defines a constant Poisson structure in the Lie algebra of $\T$.
The infinitesimal counter part of a toric Poisson structure is a Hermitian inner product. If it is totally real it corresponds to an inner
product on $i\tt$, where $\tt$ denotes the Lie algebra of $T$. In such case, we say that $e_1,\dots,e_n\in i\tt$ is an \emph{adapted Darboux basis}
if the inner product becomes standard; equivalently, $e_1,\dots,e_n,ie_1,\dots,ie_n$ is a Darboux basis for the inverse constant symplectic 
structure. 

On a toric variety a Hamiltonian Kahler form  can be described by (Legendre dual) strictly convex functions:
a \emph{Kahler potential} in logarithmic coordinates and a \emph{symplectic potential} in momentum map coordinates \cite{Gui}.

The Poisson commuting equation for a totally real Poisson structure and a Hamiltonian Kahler form corresponds --- in  appropriate coordinates ---
to property $\mathcal{I}$:

\begin{theorem}\label{thm:inversible-commuting} Let $(X,\T)$ be a toric variety endowed with a totally real toric Poisson structure $\Pi$
and a Kahler structure $\sigma$ for which 
the  action of $T$ is Hamiltonian. Let $P$ be the inverse Poisson structure of $\sigma$. Then the following statements are equivalent:
\begin{enumerate}
 \item $\Pi$ and $P$ Poisson commute: $[\Pi,P]=0$.
 \item In an adapted Darboux basis the Kahler potential $\phi$ 
%  $\in C^{\infty}(\R^{n})$
 has property $\cI$.
 \item In an adapted Darboux basis the symplectic potential $\phi^{*}$
%  $\in C^{\infty}(\overset{\circ}{\Delta})$, $\Delta\subset \R^{n}$,
has property $\cI$. 
\end{enumerate}    
\end{theorem}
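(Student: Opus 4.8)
The plan is to establish the two equivalences $(1)\Leftrightarrow(2)$ and $(2)\Leftrightarrow(3)$ separately: the first by a direct Schouten--Nijenhuis computation on the open dense orbit, the second by invoking the Legendre invariance already proved.

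First I would reduce to the open dense orbit. Both $\Pi$ and $P$ are smooth bivector fields on $X$, so $[\Pi,P]$ is a smooth trivector field and vanishes on $X$ if and only if it vanishes on the dense orbit. Fixing a base point identifies that orbit with $\T=(\C^*)^n$; I then use logarithmic coordinates $w_j=x_j+i\theta_j$ aligned with the chosen adapted Darboux basis $e_1,\dots,e_n\in i\tt$. In these coordinates the totally real toric Poisson structure is the standard \emph{constant} bivector $\Pi=\sum_j\partial_{x_j}\wedge\partial_{\theta_j}$ (its inverse is Darboux by the definition of an adapted basis), while the Hamiltonian Kahler form with Kahler potential $\phi=\phi(x)$ is $\sigma=\sum_{j,k}\phi_{,jk}\,dx_j\wedge d\theta_k$, whose inverse Poisson structure is therefore $P=\sum_{j,k}(H\phi^{-1})_{jk}\,\partial_{x_j}\wedge\partial_{\theta_k}$. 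The decisive structural point is that $T$-invariance forces all coefficients to depend on the real part $x$ alone.

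The heart of the matter is the bracket computation. Since $\Pi$ is constant, $[\Pi,P]^{abc}=\Pi^{al}\partial_l P^{bc}+\Pi^{bl}\partial_l P^{ca}+\Pi^{cl}\partial_l P^{ab}$, and because $\partial_\theta$-derivatives of $P$ vanish while each nonzero $\Pi^{al}$ couples an $x$-index to a $\theta$-index, I would organize the components of the trivector according to how many of $a,b,c$ are of $\theta$-type. Components with all three indices of the same type, and those with exactly one $\theta$-index, vanish identically; the only survivors are those with exactly two $\theta$-indices, and a brief sign-and-index bookkeeping identifies them (up to an overall sign) with
\[
\frac{\partial}{\partial x_k}(H\phi^{-1})_{ij}-\frac{\partial}{\partial x_j}(H\phi^{-1})_{ik}.
\]
Hence $[\Pi,P]=0$ on the orbit is literally the system \eqref{eq:inversible}, that is property $\cI$ for the Kahler potential $\phi$; extending by density over the dense orbit yields $(1)\Leftrightarrow(2)$.

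The remaining equivalence $(2)\Leftrightarrow(3)$ is immediate: the symplectic potential $\phi^*$ is by construction \cite{Gui} the Legendre transform of the Kahler potential $\phi$ in the same adapted Darboux basis, and by Proposition \ref{pro:legendre} the Legendre transform preserves property $\cI$, so $\phi$ has property $\cI$ precisely when $\phi^*$ does. I expect the main obstacle to be the bookkeeping in the first equivalence: normalizing $\sigma$ so that its inverse genuinely has coefficients $H\phi^{-1}$, and then tracking signs and index types carefully enough that exactly the two-$\theta$-index components --- and no others --- reproduce \eqref{eq:inversible}. The passage from the open orbit to all of $X$ and the Legendre step are routine given the results already in hand.
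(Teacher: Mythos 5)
Your proof is correct, and the Legendre step for $(2)\Leftrightarrow(3)$ is exactly the paper's (both invoke Proposition \ref{pro:legendre}), but your route to $(1)\Leftrightarrow(2)$ is genuinely different from the paper's. You compute the Schouten--Nijenhuis bracket directly in real logarithmic coordinates, exploiting that $\Pi$ is constant so that $[\Pi,P]^{abc}$ reduces to the cyclic sum $\Pi^{la}\partial_lP^{bc}+\cdots$, and then sort components by the number of $\theta$-type indices; only the two-$\theta$ components survive and they reproduce \eqref{eq:inversible} on the nose. The paper instead reads $[\Pi,P]=0$ as the cocycle condition $d_\Pi P=0$ in the Poisson cohomology of $\Pi$, uses the chain isomorphism $\Xi^\#:(\mathfrak{X}^\bullet,d_{\exp^*\Pi})\to(\Omega^\bullet,d)$ induced by the constant symplectic structure $\Xi$ inverse to $\exp^*\Pi$ to convert the condition into $d\omega=0$ for the $2$-form $\omega=-\tfrac{i}{2}\sum g_{jk}(x)\,dz_j\wedge d\bar z_k$, and then computes the exterior derivative in complex coordinates. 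Your version is more elementary and self-contained (only the local coordinate formula for the Schouten bracket is needed, and no appeal to the Poisson-cohomology isomorphism from the Laurent-Gengoux--Pichereau--Vanhaecke reference), at the cost of some index bookkeeping; the paper's version buys a cleaner conceptual statement --- commutation with $\Pi$ is closedness of an honest $2$-form of type $(1,1)$ --- which sits more naturally next to the Kahler formalism. Two small points you should make explicit when writing it up: the overall normalization of $\sigma$ is harmless because the system \eqref{eq:inversible} is invariant under rescaling $H\phi$ by a positive constant, and the identification of $\Pi$ with the standard constant bivector in logarithmic coordinates is precisely the definition of an adapted Darboux basis, so no further argument is needed there.
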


\begin{proof}
We regard the equation $[\Pi,P]=0$ as the defining equation for degree 2-cocycles in the Poisson cohomology of $\Pi$: $d_\Pi P=0$.
In logarithmic coordinates $\exp^{*}\Pi$ has an inverse which is a (constant) symplectic structure $\Xi$ on  $\tt\oplus i\tt$.
% ; its inverse is the symplectic form $\Xi$.
Therefore 
\[\Xi^\#:(\mathfrak{X}^\bullet,d_{\exp^{*}\Pi})\to (\Omega^\bullet, d)\]
is an isomorphism of chain complexes (see e.g. \cite[Proposition 6.12]{GHP}). Hence 
\[ [\exp^{*}\Pi,\exp^{*}P]=0 \Longleftrightarrow d\omega=0,\quad  \omega(X,Y):=\exp^{*}P(\Xi^\# X ,\Xi^\# Y).\]

Let $e_1,\dots,e_n,ie_1,\dots,ie_n$ be an adapted Darboux basis for the totally real toric Poisson structure. 
% $\Xi$: $e_1,\dots,e_n,ie_1,\dots,ie_n$. 
In the fixed coordinates and associated frames of the complexified tangent and cotangent bundles 
the matrices of $\Pi$ and $\Xi$ are:
\[\Pi^\#=\frac{2}{i}\begin{pmatrix} 0 & \mathrm{I}_n \\ -\mathrm{I}_n & 0         
        \end{pmatrix},\quad \Xi^\#=\frac{i}{2}\begin{pmatrix} 0 & -\mathrm{I}_n \\ \mathrm{I}_n & 0         
        \end{pmatrix}
\] 
If we let  \[\frac{2}{i}\begin{pmatrix} 0 & g \\ -g & 0         
        \end{pmatrix}\] denote the matrix of $\exp^{*}P^\#$, 
then
\[\w^\#=-\Xi^\# \exp^{*}P^\# \Xi^\#=-\frac{i}{2}\begin{pmatrix} 0 & -\mathrm{I}_n \\ \mathrm{I}_n & 0         
        \end{pmatrix}\frac{2}{i}\begin{pmatrix} 0 & g \\ -g & 0         
        \end{pmatrix}\frac{i}{2}\begin{pmatrix} 0 & -\mathrm{I}_n \\ \mathrm{I}_n & 0         
        \end{pmatrix}=\frac{i}{2}\begin{pmatrix} 0 & g \\ -g & 0         
        \end{pmatrix}\] 
Hence 
\[\w=\w(x)=\frac{-i}{2}\sum_{j,k}g_{jk}(x)dz_j\wedge d\bar{z}_k,
\]
Its exterior derivative is:
\[\begin{split}
d\w &= -\frac{i}{2}\sum_{i,j=1}^n dg_{ij}\wedge dz_i\wedge d\bar{z}_j=
-\frac{i}{2}\sum_{i,j=1}^n \sum_{k=1}^n \left(\frac{\partial g_{ij}}{\partial z_k}dz_k+
\frac{\partial g_{ij}}{\partial \bar{z}_k}d\bar{z}_k\right)\wedge dz_i\wedge d\bar{z}_j= \\
&= -\frac{i}{2}\left(\sum_{i,j,k=1}^n  \left(\frac{\partial g_{jk}}{\partial z_i}-
\frac{\partial g_{ik}}{\partial z_j}\right)\wedge dz_i\wedge dz_j\wedge  d\bar{z}_k+
\sum_{i,j,k=1}^n  \left(\frac{\partial g_{ij}}{\partial \bar{z}_k}-
\frac{\partial g_{ik}}{\partial \bar{z}_j}\right)\wedge dz_i\wedge d\bar{z}_j\wedge  d\bar{z}_k\right)\\
\end{split} 
\]
Because the entries of $g$ only depend on $x$ we have:
\[\frac{\partial g_{jk}}{\partial z_i}-
\frac{\partial g_{ik}}{\partial z_j}=\frac{1}{2}\left(\frac{\partial g_{jk}}{\partial x_i}-
\frac{\partial g_{ik}}{\partial x_j}\right),\quad 
 \frac{\partial g_{ij}}{\partial \bar{z}_k}-
\frac{\partial g_{ik}}{\partial \bar{z}_j}=\frac{1}{2} \left(\frac{\partial g_{ij}}{\partial x_k}-
\frac{\partial g_{ik}}{\partial x_j}\right)
\]
The matrix of $\exp^{*}\sigma$ is \[\frac{i}{2}\begin{pmatrix} 0 & g^{-1} \\ -g^{-1} & 0         
        \end{pmatrix},\]
and $g^{-1}$ is the Hessian of the Kahler potential $\phi$. In particular $g$ and its inverse are symmetric matrices.         
Renaming the set of indices in the first  summand and using the symmetry of $g$ we obtain:
% \begin{equation}\label{eq:pde0}
\[
d\w=0\Longleftrightarrow \frac{\partial g_{ij}}{\partial x_k}-
\frac{\partial g_{ik}}{\partial x_j},\quad  1\leq  i,j,k \leq  n.
\]
% \end{equation}
This is exactly property $\cI$ for the Hessian metric $g^{-1}=H\phi$.

The symplectic potential of $\sigma$ is the Legendre transform of $\phi$. By Proposition \ref{pro:legendre}
$\phi$ (in $\R^{n}$) has property $\cI$ if and only if $\phi^{*}$ (in $d\phi(\R^{n})$) has property $\cI$.
\end{proof}

\begin{theorem}\label{thm:no-analytic} Let $(X,\T)$ be a projective toric Poisson structure endowed with a
 toric Poisson structure $\Pi$ which Poisson commutes with a real analytic Kahler structure $\sigma$ for which 
the action of $T$ is Hamiltonian. Then  $(X,\T)$ is a Cartesian product of projective lines and both 
$\Pi$ and $\sigma$ factorize.
\end{theorem}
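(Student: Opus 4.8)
The plan is to convert the Poisson commuting equation into property $\cI$ for a potential which is real analytic on the \emph{whole} logarithmic chart, apply the analytic rigidity of property $\cI$, and then read off the combinatorics of the moment polytope. First I would apply Theorem \ref{thm:inversible-commuting}: fixing an adapted Darboux basis for $\Pi$, the condition $[\Pi,P]=0$ is equivalent to property $\cI$ for the Kahler potential $\phi$. Since $\sigma$ is real analytic and the $T$-action is Hamiltonian, its Kahler potential $\phi$ is a real analytic strictly convex function on the full logarithmic chart $\R^{n}$ attached to the open dense orbit. Thus Theorem \ref{thm:Cartan-Kahler-analytic} applies with $\Omega=\R^{n}$ and produces a matrix $B\in \mathrm{SO}(n)$ such that $H\phi$ is the restriction of a product Hessian metric to a rotated cube; equivalently $\phi(Bx)=\phi_{1}(x_{1})+\cdots+\phi_{n}(x_{n})$ with each $\phi_{i}$ strictly convex. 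So $\phi$ has orthogonal characteristics along the fixed orthonormal frame $Be_{1},\dots,Be_{n}$.

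Next I would identify the moment polytope. The moment map is the Legendre map $\nabla\phi$, whose image is the interior of the Delzant polytope $\Delta$ of the projective toric variety $X$. From the splitting $\phi(Bx)=\sum_{i}\phi_{i}(x_{i})$ one computes $\nabla\phi(\R^{n})=B\bigl(J_{1}\times\cdots\times J_{n}\bigr)$, where $J_{i}=\phi_{i}'(\R)$ is an open interval. Since $X$ is projective, $\Delta$ is compact, so each $J_{i}$ is a bounded interval and $\Delta$ is a (rotated) box. Matching this box with the given Delzant data forces every facet normal of $\Delta$ to be parallel to one of the frame vectors $Be_{i}$; hence the primitive integral normals come in $n$ mutually orthogonal pairs $\pm u_{i}$ with $u_{i}\parallel Be_{i}$. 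At any vertex the $n$ incident normals $\{u_{1},\dots,u_{n}\}$ form a $\Z$-basis by the Delzant (smoothness) condition, so the fan of $X$ is the product of the one dimensional fans of $\CP^{1}$ and $X\cong \CP^{1}\times\cdots\times \CP^{1}$.

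Finally, the factorization of the structures is immediate once the product is in place. Rechoosing the adapted Darboux basis to be $Be_{1},\dots,Be_{n}$ (still orthonormal for the Poisson inner product), $\Pi$ becomes the standard Poisson structure in coordinates adapted to the product, hence the product of the one dimensional totally real toric Poisson structures on the factors; likewise $\sigma$, having Kahler potential $\sum_{i}\phi_{i}(x_{i})$, is the sum of the pullbacks of Kahler forms on the factors. The main obstacle is the middle step: one must show that the purely analytic interior splitting delivered by Theorem \ref{thm:Cartan-Kahler-analytic} is compatible with the integral fan of $X$, i.e. that the constant orthogonal characteristics frame aligns with every facet normal. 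This is where compactness of $\Delta$ (projectivity) is essential, since it upgrades the a priori direction-only information about the $Be_{i}$ to the statement that $\Delta$ is an honest lattice box, after which the Delzant condition closes the argument.
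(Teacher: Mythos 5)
Your proposal is correct and follows essentially the same route as the paper: Theorem \ref{thm:inversible-commuting} converts the commutation equation into property $\cI$ for a potential, the analytic rigidity of Theorem \ref{thm:Cartan-Kahler-analytic} forces a global orthogonal splitting, and the Delzant condition applied to the resulting box identifies $X$ with a product of projective lines, after which both structures visibly factorize. The only real difference is cosmetic: you apply the rigidity theorem to the Kahler potential on all of $\R^{n}$ and then take the gradient (moment map) image, whereas the paper applies it to the symplectic potential and must then argue separately (by iterating the Legendre transform) that its domain is exactly the product of intervals; both yield the same rotated box.
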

\begin{proof}
Because $\sigma$ is real analytic the Kahler potential $\phi$ is real analytic; the Legendre transform preserves analytic (strictly convex) functions.
Therefore by Theorem \ref{thm:inversible-commuting} the symplectic potential $\phi^{*}$ 
has property $\cI$. By Theorem \ref{thm:Cartan-Kahler-analytic} $\phi^{*}$ is defined in a Cartesian product 
of intervals $I_1\times \cdots \times I_n$ (we may dispense with the rotation by changing accordingly the adapted Darboux basis).
One must have the equality $d\phi(\R^{n}) =I_1\times \cdots \times I_n$ because otherwise by repeating the Legendre transform 
we would get a domain for the original Kahler potential strictly containing $\R^{n}$. 
% Because the symplectic potential $\phi^{*}$ defines a Kahler structure in the whole toric variaty $X$, it must 
% satisfy boundary conditions \cite{}. Since in particular it must go to infinity when approaching $\partial \Delta$, we must have 
% \[\partial \Delta =\partial (I_1\times \cdots \times I_n).\]
% Hence $\Delta=I_1\times \cdots \times I_n$. 
Thus the interior of the moment polytope $\Delta$ is a Cartesian product of intervals. A product of intervals
has a property invariant under affine transformations: it is limited by pairs of 
parallel hyperplanes. Because $\Delta$ is a Delzant polytope there is an affine transformation that takes the integral lattice of $\tt^{*}$ to 
$\Z^{n}$, a vertex of $\Delta$ to the origin and the facets containing this vertex to the coordinate hyperplanes. Because $\Delta$ must be still 
described by parallel hyperplanes, is it actually a Cartesian product of intervals in this integral affine coordinates of $\tt^{*}$.
The fan of the Delzant polytope determines the toric variety $(X,\T)$. The fan of a cube in $(\R^{n},\Z^{n})$ corresponds 
to $\mathbb{C}P^{1}\times \cdots \times \mathbb{C}P^{1}$. 

To show that the Kahler form $\sigma$ also splits as a sum of Kahler forms on each projective line we use toric charts for $(X,\T)$.
For that we observe that the linear part of the above affine transformation must be a permutation followed by a (signed) re-scaling of each Euclidean direction.
Therefore if we dispense the affine transformation we deduce that the in the fixed compatible Darboux basis the subset  $ie_1,\dots,ie_n$ is --- up to re-scaling of its 
members --- an integral basis of $\tt$. Let us re-scale to a basis $\epsilon_1,\dots,\epsilon_n,i\epsilon_1,\cdots,i\epsilon_n$ 
so that the second block is an integral basis of $i\tt$. In the corresponding coordinates
the Kahler potential of $\sigma$ is still a sum of strictly convex functions on each coordinate and therefore  the
Legendre diffeomorphism still sends $i\tt\cong \R^{n}$ to a cube. 
Let $v$ be the vertex of its closure
whose coordinates are smaller than those of the others.
The basis of inner pointing integral vectors normal to the facets containing $v$ is exactly 
$i\epsilon_1,\cdots,i\epsilon_n$. Therefore for the standard toric chart  associated to $v$ (see e.g. \cite[Chapter2, Section 5]{A})
\[(\C^{*})^{n}\circlearrowright \C^{n}\subset \mathbb{C}P^{1}\times \cdots \times \mathbb{C}P^{1},\quad  (0,\dots0)=([0:1],\dots,[0:1]), \]
the identification\footnote{Though we do not need it here, the toric chart could be chosen compatible with the monoid structure, 
so that $(1,\dots,1)$ corresponds to the fixed point in the open orbit.} of $\T$ with $(\C^{*})^{n}$ comes from the Lie algebra identification  which  sends
$i\epsilon_1,\dots,i\epsilon_n$ to the canonical basis of $\R^{n}\subset \C^{n}$.
In other words, upon having identified $\T$ with the open orbit of $X$, the product structure induced by 
$X\cong  \mathbb{C}P^{1}\times \cdots \times \mathbb{C}P^{1}$ on $\T$ is exactly the factorisation of the torus coming from the (complex) basis
$e_1,\cdots,e_n$ of its Lie algebra. 
The factorisation of $X$ decomposes $\sigma=\sigma_1+\cdots +\sigma_n$, $\sigma_k\in \Omega^{2}(X)$.  Each  
 $\sigma_k$ is projectable by 
\[X\to \mathbb{C}P^{1}\times \cdots \widehat{\mathbb{C}P^{1}}\times \cdots \times \mathbb{C}P^{1}\]
because it has that property in the open dense subset $\T$. The compatibility of $\Pi$ with the product structure is also immediate. Therefore 
\[(X,\T,\Pi,\sigma)\cong (\mathbb{C}P^{1},\C^{*},\Pi_1,\sigma_1)\times \cdots \times 
 (\mathbb{C}P^{1},\C^{*},\Pi_n,\sigma_n),
\]
where symplectic forms are invariant under the action of $T$. 
\end{proof}

Let  $\Omega$ be a polytope with 1-handles. Its \emph{outer boundary} will be the subset of the boundary lying 
in supporting hyperplanes for the 1-handles which
are parallel to the primary ones but not equal to them; its \emph{inner boundary} will be the subset of the 
boundary in supporting hyperplanes of the polytope which are not primary supporting hyperplanes of some 1-handle. 

\begin{theorem}\label{thm:handles}
Let $(X,\T)$ be a toric variety  with fan given  by the  polytope $\Delta\subset (\R^{n},\Z^{n})$.
Let $\Omega$ be a polytope with 1-handles such that the intersection $\partial \Omega \cap \partial \Delta$ is contained 
in the union of the outer boundary of $\Omega$ and an orthogonal subset of supporting hyperplanes of the inner boundary of $\Omega$.

Then there exist an open subset $X_\Omega\subset X$ invariant under the action of $T$ and 
a Kahler form $\sigma\in \Omega^{2}(X_\Omega)$ with the following properties:
\begin{enumerate}
 \item The action of $T$ on $(X_\Omega,\sigma)$ is Hamiltonian with momentum map 
  the union of $\Omega$ with the interior on each face of $\Delta$ of the outer and inner boundaries of $\Omega$.
 \item The Poisson structure with corresponds to $\sigma$ Poisson commutes with any toric Poisson structure
 for which the canonical basis of $\R^{n}$ is an adapted 
 Darboux basis up to scaling.
\end{enumerate}
\end{theorem}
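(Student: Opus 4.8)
The plan is to realize the strictly convex function $\phi$ of Proposition~\ref{pro:bifurcations} as the \emph{symplectic potential} of the sought Kahler form, and to repair it at those facets of $\Delta$ which bound $\Omega$ by grafting on Guillemin's canonical singular terms, so that the associated toric Kahler metric extends smoothly over exactly the corresponding boundary orbits. Writing each relevant facet of the Delzant polytope $\Delta$ as $\{\ell_i\geq 0\}$ with $\ell_i(x)=\langle x,v_i\rangle-c_i$ and $v_i$ its primitive integral inner normal, I would set
\[
u=\phi+\tfrac12\sum_{i\in \cA}\ell_i\log \ell_i,
\]
where $\cA$ indexes the \emph{active} facets, namely the hyperplanes of $\partial\Delta$ meeting $\partial\Omega$ either along the outer boundary or along the orthogonal part of the inner boundary. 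The open set $X_\Omega\subset X$ is then the union of the open orbit with the orbits attached to the faces of $\Delta$ cut out by the active facets.

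The first task is to check that $u$ is a bona fide symplectic potential on this region. Since $\phi$ is smooth and strictly convex on $\Omega$ by Proposition~\ref{pro:bifurcations}, and since $\tfrac12\sum_{i\in\cA}\ell_i\log\ell_i$ is precisely the Guillemin model whose Hessian produces the required transverse degeneration at each active facet, the Guillemin boundary conditions \cite{Gui} are met along every active facet while $\phi$ keeps $Hu$ positive definite in the interior. Hence $u$ defines a smooth, positive toric Kahler metric, and therefore a Kahler form $\sigma\in\Omega^{2}(X_\Omega)$, over the union of the open orbit and the orbits sitting over the active faces; over the non-active facets $u=\phi$ is smooth, so those orbits are absent and $X_\Omega$ is open. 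By construction the $T$-action on $(X_\Omega,\sigma)$ is Hamiltonian, and its momentum map image is $\Omega$ together with the relative interiors, inside the faces of $\Delta$, of the outer and inner boundaries of $\Omega$. This is statement~(1).

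For statement~(2) the point is that grafting on the active Guillemin terms does not destroy property $\cI$, and here the orthogonality hypothesis is essential. On $\Omega^{0}$ the function $\phi_0$ is a round quadratic form, so \emph{every} orthonormal frame is a frame of characteristics; because the active inner facets form an orthogonal set, one may choose a single orthonormal frame containing all of their normals $v_i/|v_i|$, in which $\phi_0$ and each term $\tfrac12\ell_i\log\ell_i$ is a sum of functions of one (mutually orthogonal) coordinate. On a handle $\Omega^{1}_l$ the splitting~(\ref{eq:quadratic-orthogonal}) gives $\phi=\phi_l(y_1)+q_l$ in the frame $B_l$, and the only active facet meeting the handle is its outer boundary, whose normal lies along the primary characteristic $y_1$; the corresponding term $\tfrac12\ell\log\ell$ is again a function of $y_1$ alone. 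Thus on each piece $u$ remains a sum of one-variable functions in an orthonormal frame, i.e.\ has orthogonal characteristics, and arguing across the overlaps exactly as in Proposition~\ref{pro:bifurcations} one gets that $u$ has property $\cI$ on all of $X_\Omega$. By Theorem~\ref{thm:inversible-commuting} (invoking Proposition~\ref{pro:legendre} to pass freely between the symplectic and the Kahler potential) this is equivalent to the Poisson commuting of $P$ with the totally real toric Poisson structure whose adapted Darboux basis is the canonical one; since $[\Pi,P]=0$ is unaffected when $\Pi$ is rescaled, the same holds for every $\Pi$ for which the canonical basis is an adapted Darboux basis up to scaling, giving statement~(2).

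The main obstacle is the boundary analysis of the second paragraph: one must verify, facet by facet, that attaching the singular Guillemin term of an active facet to the smooth $\phi$ really closes the metric up smoothly over the associated boundary orbit (and over no other), so that $X_\Omega$ is open and $\sigma$ is globally Kahler on it. This is delicate precisely because $\phi$ is assembled by gluing a quadratic core to the handles along their primary hyperplanes, and one must confirm that the active facets are compatible with this gluing — which is exactly what the hypothesis on $\partial\Omega\cap\partial\Delta$, and in particular the orthogonality of its inner part, is designed to ensure. By comparison, the verification that property $\cI$ survives the grafting is routine once this alignment is in place.
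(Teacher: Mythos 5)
Your strategy --- take the $\phi$ of Proposition \ref{pro:bifurcations} as symplectic potential and enforce Guillemin boundary behaviour at the active facets --- is the same as the paper's, and your statement (1) is a compressed version of what the paper carries out in toric charts. The gap is in statement (2), and it comes from \emph{how} you impose the boundary conditions. You graft the singular terms globally, $u=\phi+\tfrac12\sum_{i\in\cA}\ell_i\log\ell_i$, and then argue piece by piece that $u$ is a sum of one-variable functions in an orthonormal frame, invoking on each piece only the facet adjacent to that piece. But every term of the sum lives on every piece. On the core $\Omega^{0}$ the function $u$ contains, besides $k|x|^{2}$ and the inner terms, the term $\tfrac12\ell\log\ell$ of the outer facet of \emph{each} handle, whose normal direction is that handle's primary characteristic; when two handles have primary characteristics that are neither equal nor orthogonal --- precisely the situation of item (4) of Proposition \ref{pro:bifurcations}, which is the whole point of the construction --- $u|_{\Omega^{0}}$ is a round quadratic form plus two non-quadratic one-variable functions of non-orthogonal directions. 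Such a function does not have orthogonal characteristics and, generically, does not have property $\cI$ (its Hessian has simple eigenvalues on an open set, so by Theorem \ref{thm:propertyI-generic} property $\cI$ would force a constant orthogonal frame of characteristics, which does not exist). The same failure occurs on a handle $\Omega^{1}_{l'}$, which receives the singular terms of the other handles' outer facets and of the inner facets, none of which need be functions of a single coordinate of the frame $B_{l'}$. So your $u$ does not satisfy the hypotheses you feed into Theorem \ref{thm:inversible-commuting}.

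The paper avoids this by never adding global terms: it modifies the one-variable summands of $\phi$ themselves, requiring $\phi_l(y_1)$ to \emph{equal} $\tfrac12\alpha_l(\log\alpha_l-1)$ near the end of $I_l$ opposite to $p_l$ (this differs from $\tfrac12\ell_l\log\ell_l$ by an affine function, hence is the same Guillemin model), and likewise replaces the relevant one-variable summands of the rotated quadratic form near the orthogonal inner facets. Because the modification is confined to the summand in the direction normal to the facet, each piece remains a sum of one-variable functions in its own orthonormal frame, Proposition \ref{pro:bifurcations} applies verbatim, and property $\cI$ --- hence the Poisson commuting via Theorem \ref{thm:inversible-commuting} --- survives. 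Repairing your argument amounts to localizing each $\tfrac12\ell_i\log\ell_i$ inside its own one-variable factor, which is exactly the paper's construction. A secondary point: the openness of $X_\Omega$ and the smooth extension of $\sigma$ over the added orbits does not follow from merely asserting that ``the Guillemin conditions are met''; the paper proves it by exhibiting each $X_l$ in a toric chart as a product with a disk semigroup and computing the extended bivector $P=P_l+P_l'$ explicitly, and for that it needs $\phi_l$ to coincide with (not just be asymptotic to) the model near the facet and to be smooth up to that closed end of $I_l$.
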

\begin{proof}
We start with  $\phi\in C^{\infty}(\Omega)$ a function as in Proposition \ref{pro:bifurcations} on which  we shall impose natural boundary conditions
\footnote{The functions will satisfy well-known boundary conditions to produce Kahler 
metrics (see e.g. \cite[Chapter 2]{A}). These metrics/complex structures are constructed fixing the symplectic 
structure. Because we are interested in keeping fixed the complex structure, we are going to be very explicit with the computation of the Kahler potentials
and corresponding symplectic forms.}
% We shall 
% assume first that the inner boundary of $\Omega$ does not intersect $\partial \Delta$. 
Let
$\Omega^{1}_l$ be a 1-handle whose supporting hyperplane in the outer boundary of $\Omega$ intersects $\partial \Delta$. Let
$\alpha_l$ be the unique integral affine map which vanishes in the supporting hyperplane and it is positive on $\Delta$. 
We shall assume that $\phi_l$ equals $\frac{1}{2}\alpha_l(\log(\alpha_l)-1)$ near the end of $I_l$ opposite to $p_l$. This 
is always possible because the existing constraint on $\phi_l$ is near $p_l$.

The region $X_\Omega\subset X$ is the result of adding certain points to  $\exp(\Omega^{*}\oplus i\R^{n})\subset X$. By item (1) in Proposition 
\ref{pro:Legendre-polytope-1-handles} the primary characteristics of ${\Omega^{1}_l}^{*}$ and $\Omega^{1}_l$ are the same and
the orientation is also preserved. Because $\Delta$ determines a fan the primary characteristic $\R^{1}_l\subset \R^{n}\cong i\tt$ determines
a 1-parameter subgroup  $\C_l^{*}\subset \T$ together with an isomorphism $\C^{*}_l\cong \C^{*}$ (the Lie algebra is trivialized by a positive 
integral vector in $\R^{1}_l$). Because the derivative of $\phi_l$ near the boundary point opposite to $p_l$ goes to infinity $I^{*}_l\subset \R^{1}_l$
is a semi-infinite interval in the positive half line; in  particular it is a semigroup; let $\mathbb{D}^{o}_l\subset \C_l^{*}$
be the semigroup $\exp(I^{*}_l\oplus i\R^{1}_l)$. This semigroup acts (freely) on $\exp({\Omega^{1}_l}^{*}\oplus i\R^{n})$. 
If we let $\T_l\subset \T$ be the subtorus
which exponentiates the complexification of $\R^{n-1}_l$, then we can factor
\begin{equation}\label{eq:product-structure}
\exp({\Omega^{1}_l}^{*}\oplus i\R^{n})=\mathbb{D}^{o}_l\times \exp(F_l\oplus i\R^{n-1}_l)\subset \T=\C^{*}_l\times \T_l.
\end{equation}
We define $X_\Omega\subset X$ to be the union of $\exp(\Omega^{*}\oplus i\R^{n})$ with the closure of every  $\mathbb{D}^{o}_l$-orbit
in $\exp({\Omega^{1}_l}^{*}\oplus i\R^{n})$,
for every 1-handle  whose supporting hyperplane in the outer boundary of $\Omega$ intersects $\partial \Delta$ (for the moment we assume that no inner
boundary components are in $\partial \Delta$).

The function $\phi^{*}$ defines a Kahler form $\sigma=i\partial\bar{\partial}\log(\phi^{*})$ on $\exp(\Omega^{*}\oplus i\R^{n})$ for which the action of 
$T$ is Hamiltonian. We want to 
argue that $\sigma$ extends to a Kahler form on $X_\Omega$. 

Firstly, we show how upon adding the orbit closures to $\exp({\Omega^{1}_l}^{*}\oplus i\R^{n})$ (and not in 
$\exp(\Omega^{*}\oplus i\R^{n})$)
we get an  open subset $X_l\subset X$ which is $T$-invariant and 
to which the product structure in (\ref{eq:product-structure}) extends. 
For that we use the toric atlas (as a monoid) determined by the polytope $\Delta$ (\cite[Chapter2, Section 5]{A}):
To each vertex $v\in \Delta$ there correspond a toric chart which identifies the union of orbits of $X$ which correspond to 
the star of $v$ with the standard affine toric variety: $(\mathbb{C}^{n},{(\C^{*})}^{n})$.
The standard integral basis of the Lie algebra of ${(\mathbb{S}^{1})}^{n}$ comes from the integral linear forms $\nu_{l_1},\dots,\nu_{l_n}$ associated to 
the affine forms $\alpha_{l_j}$; in particular this describes how for each toric chart $\tt\oplus i\tt$ --- for which we already have picked a basis ---
is identified with the Lie algebra 
of the standard complex torus $\R^{n}\oplus i\R^{n}$.
The point in the open orbit of $X$ which determines the monoid structure goes to the unit $(1,\dots,1)\in \C^{n}$.

Let us fix a toric chart of a vertex $v$ which belongs to the supporting hyperplane of $\Omega^{1}_l$
and $\Delta$. Under the identification of $\T$ with the standard torus ${(\C^{*})}^{n}$ we can assume that the (trivialized) subgroup  $\C_l^{*}\cong \C^{*}$ 
maps to the first factor of the standard torus so that on trivializations the isomorphism is given by the inversion. Thus 
$\mathbb{D}^{o}_l$ maps to a semigroup $\mathbb{D}^{i}_l\subset \C^{*}$ contained in the unit disk. Let $W_l\subset \C^{n}$
be the image in the toric chart of the second factor  $\exp(F_l\oplus i\R^{n-1}_l)$. Then the image of $\exp({\Omega^{1}_l}^{*}\oplus i\R^{n})$
is 
\[(zw_1,\dots,w_n),\quad z\in \mathbb{D}^{o}_l,\quad w=(w_1,\dots,w_n)\in W_l.\]
Therefore the image of $X_l$ is also completely contained in the toric chart and  equals
\[(zw_1,\dots,w_n),\quad z\in \mathbb{D}^{o}_l\cup \{0\},\quad (w_1,\dots,w_n)\in W_l.\]
Because $W_l$ is a codimension 2 submanifold
which intersects each complex line parallel to the $z_1$-axis transversely in at most one point,
we deduce that $X_l$ is an open subset which extends the product structure. By construction it is also $T$-invariant. 

To show that $\sigma$ extends to a Kahler form on $X_l$ we shall work with its inverse Poisson structure $P$. 
The decomposition  $\phi^{*}|_{{\Omega^{1}_l}^{*}}(y)=\phi^{*}_l(y_1)+q^{*}_l(y_2,\dots,y_1)$ 
implies that on the exponentiation of the 1-handle  $P$ decomposes as $P_l+P'_l$, where each summand  is a field of bivectors tangent
to one of the foliations
in the product decomposition (\ref{eq:product-structure}). The second field of bivectors is easier to describe:
In the Lie algebra the foliation is given by translates of $F_l\subset \R^{n-1}_l$. On each such leaf the Kahler potential for the corresponding
Kahler form is the quadratic form $q_l$. Therefore $P_l'$ corresponds to a constant bivector on ${\Omega^{1}}^{*}\times i\R^{n}$.
The exponentiation of a constant bivector to  the (abelian) Lie group has an alternative description: it is the field of bivectors obtained by replacing
each vector in the decomposition in $\wedge^{2}(\tt\oplus i\tt)$ by its corresponding infinitesimal vector field for the action by (left) multiplication.
Because the action of $\T$ on itself extends to an action on $X$ it follows that $P_l'$ is the restriction of a (Poisson) structure on $X$.
To describe $P_l$ on each semigroup orbit we may assume that $\phi_l(y_1)$ equals $-\frac{1}{2}\alpha_l(\log(\alpha_l-1)$ everywhere in $I_l$.
The Legendre transform of $-\frac{1}{2}(-r)(\log(-r)-1)$ for $r<0$ is $\frac{1}{2}\exp^{-2r}$. Using that the Legendre transform commutes
with orthogonal transformations and its behavior under translation and scaling we obtain $\phi_l^{*}(y_1)=\frac{1}{2}\exp^{-2y_1/|\nu_l|}-y_1d_l$,
where $d_l$ is the distance of the boundary point of $I_l$ different from $p_l$. Since we are interested in Kahler forms/bivectors we may dispense with the linear summand.
Under the semigroup identification $\log:\mathbb{D}_l^{o}\to I_l^{*}\oplus i\R$ the potential pull backs to  $\frac{1}{2 z\bar{z}}$. 
Under the identification $\mathbb{D}_l^{o}\to \mathbb{D}_l^{i}$ it maps to $\frac{1}{2}z\bar{z}$. Under the action on the standard toric chart 
it maps to $\frac{1}{2a_1^{2}}z_1\bar{z}_1$. Hence the bivector $P_1$ there equals 
$\frac{2a_1^{2}}{i}\frac{\partial}{\partial z_1}\wedge \frac{\partial}{\partial \bar{z}_1}$ near $z_1=0$, which extends to $X_l$. Both $P_l$ and $P_l'$ are
non-degenerate in the added points, and thus $\sigma$ extends to a Kahler form. 

For boundary components in the inner boundary we change coordinates by a rotation so that all supporting hyperplane involved
are coordinate hyperplanes. Then we impose the same boundary conditions as above on the corresponding summands of the multiple of the standard quadratic form in these
coordinates. We may have supporting hyperplanes with non-empty intersection, say $k$ of them. This means that we shall have work with the corresponding coordinates and 
hence with a splitting into a vector subspace of dimension $k$ and its orthogonal complement. We shall work on a toric chart associated to a vertex 
in the intersection of the supporting hyperplanes. There, the foliation corresponding to the vector subspace will have leaves given by the action 
of ${(\C^{*})}^{k}$ on the first $k$ coordinates on an appropriate slice. Hence by adding closure of (semigroup) orbits we shall obtain an open 
subset. The computation of the Kahler potential for the inverse symplectic form of $P_l$ on such leaves is analogous.

To computation of the image of the momentum map is straightforward.

\end{proof}

The proof of Proposition \ref{pro:cp2} in the Introduction is a minor variation of the following:

\begin{example} (Attaching a toric 1-handle to the standard commuting pair)
Let $\Delta$ be the standard n-simplex in $\R^{n}$. Let $\Omega^{0}$ be the truncation of the (open) cube of side $(0,\tfrac{1}{n})$ by the hyperplane
$x_1+\cdots +x_n=\frac{2}{3}$. Let $\Omega^{1}$ be the 1-handle with the primary characteristic spanned by $(1,\cdots,1)$, 
and so that its primary supporting hyperplane is $x_1+\cdots +x_n=\frac{2}{3}$, the parallel one is $x_1+\cdots +x_n=1$ and the 
$n-1$-dimensional polytope is the (translation of) the intersection of the cube and the primary supporting hyperplane. 

We let $\Omega$ be the polytope with 1-handles determined by $\Omega^{0}$ and $\Omega^{1}$ above. It is in the hypotheses 
of Theorem \ref{thm:handles}. By going through the its proof we check that:
\begin{itemize}
 \item In the toric chart associated to the origin $\sigma$ (near the origin)
will be the standard (constant) Kahler form $\frac{i}{2}\sum_j dz_j\wedge d\bar{z}_j$; the open cube is a (punctured) polydisk which is appropriately truncated.
\item  Near the truncation 
hypersurface $W$ the Kahler form is  $\frac{i}{2}\sum_j \frac{1}{z_j\bar{z}_j} dz_j\wedge d\bar{z}_j$.
\item Attaching the 1-handle amounts to the following: the truncating hypersurface $W$ is stable under the diagonal action of $\mathbb{S}^{1}$.
Then each such orbit is being `capped' by a (holomorphic) disk which is Kahler for $\sigma$; the disk is nothing 
but (a part of) the projective line determined by the orbit, its center being in the hyperplane at infinity 
$\mathbb{C}P^{n}=\C^{n}\cup \mathbb{C}P^{n-1}$; this is done for the whole 
$F_1\times {(\mathbb{S}^{1})}^{n-1}$-family.
\item  The inverse of Kahler form $\sigma$ is a Poisson bivector field $P$ which Poisson commutes with the totally real toric Poisson  bivector field $\Pi$  which in the 
previous toric chart is $\frac{2}{i}\sum_j z_j\bar{z}_j \frac{\partial }{\partial z_j}\wedge \frac{\partial}{\partial \bar{z}_j}$. 
% Its 
% symplectic (Kahler) orbits are orbits of the toric action; its extension to $\mathbb{C}P^{n}=\C^{n}\cup \mathbb{C}P^{n-1}$ has the same toric orbits.
% \item We obtain a pencil of Poisson structures $(1-t)\Pi+tP$, $t\in  [0,1]$, on $\mathbb{C}P^{n}_\Omega$
% which for $t\neq 0$ correspond to a Kahler forms.  
\end{itemize}

\end{example}

% 
% ******************************************************************************************************************************************************
% 
% {\color{red}
% \begin{itemize}
%  \item Try to analyze the bifurcation structure of a Hessian metric satisfying $(\cA)\backslash (\cA_{CK})$.
% %  \item Write some of the previous examples in Delzant polytopes to produce pencils in ``large regions'' of projective spaces.
%  \item Compare with Cartan-Kahler theorem (and literature on ``bifurcations of solutions of  P.D.E.'s).
% %  \item Other symmetric spaces, in particular for Hermitian (Kahler) metrics.
% \end{itemize}
% }

 \end{document}